\documentclass[a4paper,11pt]{article}

\pdfoutput=1

\usepackage{epic}
\usepackage{eepic}
\usepackage{mathtools}
\usepackage{ mathrsfs }
\usepackage{amsmath}
\numberwithin{equation}{section}
\usepackage{tikz-cd}
\usepackage{amsthm}
\usepackage{amssymb}
\usepackage{enumerate}
\usepackage[margin=2.5cm]{geometry}
\usepackage{url}

\let\OLDthebibliography\thebibliography
\renewcommand\thebibliography[1]{
	\OLDthebibliography{#1}
	\setlength{\itemsep}{0pt}
}

\newcommand{\C}{\mathbb C}
\newcommand{\Q}{\mathbb Q}
\newcommand{\R}{\mathbb R}
\newcommand{\Z}{\mathbb Z}

\newcommand{\A}{\mathbb A}
\newcommand{\Imp}{\operatorname{Im}}
\newcommand{\Rep}{\operatorname{Re}}

\newcommand{\Aut}{\operatorname{Aut}}

\newcommand{\Iso}{\operatorname{Iso}}

\newcommand{\sign}{\operatorname{sign}}
\newcommand{\sgn}{\operatorname{sgn}}

\newcommand{\tr}{\operatorname{tr}}

\newcommand{\GL}{\operatorname{GL}}
\newcommand{\q}{\operatorname{q}}
\newcommand{\image}{\operatorname{im}}
\newcommand{\spann}{\operatorname{span}}

\newcommand{\Ort}{\operatorname{O}}

\newcommand{\leg}[2]{\left(\frac{#1}{#2}\right)}
\newcommand{\matr}[4]{\left(\begin{smallmatrix}
		#1 & #2 \\
		#3 & #4
	\end{smallmatrix}\right)}
\newcommand{\sumstack}[1]{\sum_{\substack{#1}}}

\newcommand{\SL}{\mathrm{SL}_2(\Z)}
\newcommand{\SLn}[1]{\mathrm{SL}_2(\Z/#1\Z)}
\newcommand{\Spn}[1]{\mathrm{Sp}_{#1}(\Z)}
\newcommand{\Mp}{\mathrm{Mp}_2(\Z)}

\newcommand{\g}{n}
\newcommand{\n}{l}
\newcommand{\h}{h}
\newcommand{\D}{\mathcal{D}}

\newcommand{\mainTheorem}{Let $\D$ be a discriminant form of even signature $\sign(\mathcal{D})$ and $m$ a positive integer such that $m = \sign(\D)\bmod 8$, $m>\text{$p$-rank}(\D)$ for all primes $p$ and $m>6$. Then there are positive-definite even lattices $L$ such that $L'/L\cong\D$, i.e.\ the genus $I\!I_{m,0}(\D)$ is non-empty. Suppose for any $L$ of genus $I\!I_{m,0}(\D)$ the $\Z_p$-lattice $L_p = L\otimes_\Z\Z_p$ splits a hyperbolic plane over $\Z_p$. Then
	\[ \mathrm{S}_k(\D)\subset\Theta_{m,k}(\D) \]
for all $k\geq m/2$.}

\newcommand{\mainTheoremCorollary}{Let $\mathcal{D}$ be a discriminant form of even signature $\sign(\mathcal{D})$. Let $m\in\Z_{>0}$ with $m = \sign(\mathcal{D})\bmod 8$ and $m\geq10$. Then
	\[ \mathrm{S}_k(\mathcal{D})\subset\Theta_{m,k}^\uparrow(\mathcal{D}) \]
	for all $k\geq m/2$.}

\newtheorem{thm}{Theorem}[section]
\newtheorem{prp}[thm]{Proposition}
\newtheorem{cor}[thm]{Corollary}
\newtheorem{lem}[thm]{Lemma}

\newtheorem{defi}[thm]{Definition}

\title{The basis problem for modular forms for the Weil representation}
\author{Manuel Karl-Heinz M\"uller}

\begin{document}

\begin{center}
{\Large\bf The basis problem for modular forms\\[2mm]
  for the Weil representation}\\[10mm]
Manuel K.-H.\ M\"{u}ller,\\
Technische Universit\"at Darmstadt, Darmstadt, Germany,\\
mmueller@mathematik.tu-darmstadt.de
\end{center}

\vspace*{1.2cm}
\begin{center}
	\begin{minipage}{.8\textwidth}
		\noindent
		{\small The vector valued theta series of a positive-definite even lattice is a modular form for the Weil representation of $\SL$. We show that the space of cusp forms for the Weil representation is generated by such functions. This gives a positive answer to Eichler's basis problem in this case. As applications we derive Waldspurger's result on the basis problem for scalar valued modular forms and give a new proof of the surjectivity of the Borcherds lift based on the analysis of local Picard groups.}
	\end{minipage}
\end{center}

\vspace*{1.2cm}
\begin{tabular}{rl}
1  & Introduction \\
2  & The Weil representation \\
3  & Vector valued Hecke operators \\
4  & Vector valued Eisenstein series \\
5  & The vector valued Siegel--Weil formula \\
6  & The space of theta series \\
7  & Applications
\end{tabular}
\vspace*{10mm}

\section{Introduction}\label{sec:Introduction}

Let $L$ be a positive-definite even lattice of even rank $m$ with bilinear form $(\cdot,\cdot)$. It is well-known that the theta series $\theta(\tau) = \sum_{\lambda\in L}e^{\pi i(\lambda,\lambda)\tau}$ of $L$ is a modular form of weight $m/2$ for the congruence subgroup $\Gamma_0(N)$ and a certain quadratic character, where $N$ is the level of $L$. In \cite{E1} Eichler announced and later in \cite{E2} proved that for $N$ square-free and $m=0\bmod4$, the space of newforms of weight $m/2$ for $\Gamma_0(N)$ has a basis consisting of theta series corresponding to lattices of level $N$. More generally the question of finding an explicit basis of an appropriate space of modular forms consisting of theta series is known as the basis problem. An important contribution to this problem was given by Waldspurger. In \cite{Wal} he proved that for all positive integers $m=0\bmod4$ and $k\geq m/2$ the space of newforms of weight $k$ for $\Gamma_0(N)$ and trivial character is generated by theta series of lattices of rank $m$ and level $N$ weighted with harmonic polynomials of degree $k-m/2$. For non-trivial character he proved the result when $N$ is a square free integer congruent to $1$ modulo $4$. Furthermore, in \cite{BKS} Böcherer, Katsurada and Schulze-Pillot showed that for $k>2\g+1$ the space of Siegel cusp forms of weight $k$ and genus $\g$ for $\Gamma_0(N)$ with $N$ square free is generated by harmonic theta series of appropriate lattices.

Now let $L'$ be the dual lattice of $L$. Then $L'/L$ is called the discriminant group or discriminant form of $L$ and we denote by $\C[L'/L]$ the group algebra of $L'/L$. Let $k\geq m/2$ be an integer and let $P\in\C[x_1,\hdots,x_m]$ be a harmonic polynomial of homogeneous degree $k-m/2$. The vector valued theta series $\theta_{L,P}$ taking values in $\C[L'/L]$ is given by
\begin{align*}
    \theta_{L,P} \coloneqq \sum_{\gamma\in L'/L}\theta_\gamma e^\gamma,
\end{align*}
where $\theta_\gamma(\tau) = \sum_{\lambda\in\gamma + L}P(\lambda)e^{\pi i(\lambda,\lambda)\tau}$. When $P$ is identically $1$ it is usually dropped from the notation. By the Poisson summation formula $\theta_{L,P}$ is a vector valued modular form for the Weil representation, i.e.
\begin{align*}
    \theta_{L,P}(M\tau) = (c\tau+d)^k\rho_{L'/L}(M)\theta_{L,P}(\tau)\quad\text{for $M=\left(\begin{smallmatrix} a & b \\ c & d\end{smallmatrix}\right)\in\SL$},
\end{align*}
where $M\tau$ denotes the M\"obius transformation and $\rho_{L'/L}:\SL\rightarrow\GL(\C[L'/L])$ the Weil representation of $\SL$. The latter is a special case of the representations of symplectic groups constructed by Weil in \cite{W1}. Two positive-definite even lattices $L$ and $M$ of rank $m$ have isomorphic discriminant forms $\mathcal{D}$ and hence isomorphic Weil representations if and only if they are in the same genus, which is denoted by $I\!I_{m,0}(\mathcal{D})$. It is a natural question whether the space $\mathrm{S}_k(\D)$ of cusp forms for $\rho_{\D}$ is generated by the theta series in the genus $I\!I_{m,0}(\D)$. The present paper answers this question in the affirmative if the rank of the lattice is sufficiently large compared to the $p$-ranks of the discriminant form. Because there is no canonical isomorphism between discriminant forms of lattices in the same genus, there is also no canonical way to identify their Weil representations. Therefore, we define for a discriminant form $\mathcal{D}$
\begin{align*}
    \Theta_{m,k}(\mathcal{D}) \coloneqq &\spann\{\sigma^*\theta_{L,P}\mid L\in I\!I_{m,0}(\D), \ P \text{ harmonic of degree }k-m/2, \ \sigma\in\Iso(\mathcal{D},L'/L)\},
\end{align*}
where $\sigma^*\theta_{L,P} = \sum_{\gamma\in\mathcal{D}}\theta_{\sigma(\gamma)}e^\gamma$. The main result of this paper is Theorem \ref{thm:MainTheorem}:

%Therefore, by $\Theta_k(L'/L)$ we denote the space of modular forms generated by all possible embeddings of theta series $\theta_M$ in $\C[L'/L]$, where $M$ is in the genus of $L$. Furthermore set $\Theta_k(L'/L)_0 = \Theta_k(L'/L)\cap\mathrm{S}_k(L'/L)$, then we obtain (see Theorem \ref{thm:MainTheorem})

\medskip
{\em \mainTheorem}
\medskip

We remark that a $p$-adic lattice $L_p$ of rank $m$ splits a hyperbolic plane over $\Z_p$ if and only if $\text{$p$-rank}(\mathcal{D}) < m-2$ or $\text{$p$-rank}(\mathcal{D}) = m-2$ and $\prod_{q}\epsilon_q = \leg{-a}{p}$, where the $p$-adic component of $\D$ is equal to
\[ \bigoplus_qq^{\epsilon_qn_q} \]
in the notation of Conway and Sloane (cf.\ \cite{CS}, chapter 15) and $|\mathcal{D}| = p^\alpha a$ with $(a,p) = 1$.

It is likely possible to extend the result to the case where $m$ is odd. Then we have to consider the metaplectic group $\Mp$ instead of $\SL$. Also the condition $m>6$ can possibly be relaxed. Then, however, we will need to deal with some issues regarding convergence of some of the objects used in the proof.

In order to extend the result to discriminant forms not satisfying the condition of Theorem \ref{thm:MainTheorem}, we consider a larger space of theta series, namely
\begin{align*}
	\Theta_{m,k}^\uparrow(\mathcal{D}) \coloneqq &\spann\{\uparrow_H^\mathcal{D}(\sigma^*\theta_{L,P})\mid L\in I\!I_{m,0}(H^\bot/H) \text{ for some isotropic subgroup } H\subset\mathcal{D}, \\
	&\qquad P \text{ harmonic of degree }k-m/2, \sigma\in\Iso(H^\bot/H,L'/L)\}.
\end{align*}
Here $\uparrow_H^\mathcal{D}:\C[H^\bot/H]\rightarrow\C[\mathcal{D}]$ is a map that commutes with the corresponding Weil representations and thus sends modular forms to modular forms (see section \ref{sec:TheWeilRepresentation}). We obtain Corollary \ref{cor:MainTheorem}:

\medskip
{\em \mainTheoremCorollary}
\medskip

Corollary \ref{cor:MainTheorem} follows immediately from Theorem \ref{thm:MainTheorem} using a result in \cite{M2} (see end of Section \ref{sec:ThetaFunctions}).% The result \cite{Wal} on the scalar valued basis problem by Waldspurger also follows from Theorem \ref{thm:MainTheorem} (see Corollary \ref{cor:Waldspurger}).

The proof of Theorem \ref{thm:MainTheorem} uses the so-called doubling method that was also employed in \cite{BKS}. In what follows we give a short description of the proof idea. \\
First let us consider the case $k=m/2$. The lattices contributing to $\Theta_{m,k}(\mathcal{D})$ are in the genus $I\!I_{m,0}(\mathcal{D})$, which we will denote by $G$. We define a map $\Phi_\mathcal{D}:\mathrm{S}_k(\mathcal{D})\to\Theta_{m,k}(\mathcal{D})$ by
\begin{align*}
    \Phi_\mathcal{D}(f) \coloneqq \mu(G)^{-1}\cdot|\Ort(\mathcal{D})|^{-1}\cdot\sum_{L\in G}\frac{1}{\#\Aut(L)}\sum_{\sigma\in\mathrm{Iso}(\mathcal{D},L'/L)}(f,\sigma^*\theta_{L})\cdot\sigma^*\theta_{L}.
\end{align*}
Here the first sum ranges over the positive-definite even lattices in $G$, $\mu(G)$ denotes the mass of $G$, $(\cdot,\cdot)$ the Petersson scalar product and $\Ort(\mathcal{D})=\Iso(\mathcal{D},\mathcal{D})$ the orthogonal group of $\mathcal{D}$. The map $\Phi_\D$ sends cusp forms to cusp forms with image $\Theta_{m,k}(\mathcal{D})\cap\mathrm{S}_k(\mathcal{D})$. We want to show that it is injective. The definition of vector valued theta series can be extended to Siegel theta series of genus $2$. We find that
\begin{align*}
    \Phi_\D(f)(z') = \int_{\SL\backslash\mathbb{H}}\langle f,\theta_G^{(2)}\left(\matr{z}{0}{0}{-\overline{z'}}\right)\rangle y^k\frac{\mathrm{d}x\mathrm{d}y}{y^2},
\end{align*}
where
\begin{align*}
    \theta_G^{(2)} &= \mu(G)^{-1}\cdot|\Ort(\mathcal{D})|^{-1}\cdot\sum_{L\in G}\frac{1}{\#\Aut(L)}\sum_{\sigma\in\mathrm{Iso}(\mathcal{D},L'/L)}\sigma^*\theta_L^{(2)}
\end{align*}
is the Siegel genus theta series. By the Siegel--Weil formula $\theta_G^{(2)}$ is equal to the Siegel Eisenstein series $E_{k,\D}^{(2)}$. Studying $E_{k,\D}^{(2)}\left(\left(\begin{smallmatrix} z & 0 \\ 0 & z'\end{smallmatrix}\right)\right)$ we show that $\Phi_\D$ can be expressed in terms of Hecke operators for the Weil representation, i.e.
\begin{align*}
    \Phi_\mathcal{D} = C(k)\frac{e(-\sign(\mathcal{D})/8)}{\sqrt{|\mathcal{D}|}}\sum_{\n=1}^\infty\frac{T(\n^2)}{\n^{2k-2}},
\end{align*}
where $C(k)$ is some non-zero constant depending only on $k$. We will see that if $\Phi_{L'/L}(f) = 0$, then for any sublattice $M\subset L$ we also have $\Phi_{M'/M}(\uparrow_{L/M}^{M'/M}(f)) = 0$. On the other hand, by investigating the action of the Hecke operators for primes dividing the level of $\mathcal{D}$, we find that $\Phi_\mathcal{D}(f) = 0$ implies that $f$ has certain symmetries. %Indeed, for $\gamma,\mu\in\mathcal{D}$, whenever $p\gamma=p\mu$ and $\q(\gamma) = \q(\mu)\bmod1$ and both $\gamma$ and $\mu$ are not in the image of $x\mapsto px$, then the $\gamma$- and $\beta$-component of $f$ are equal.
Finally, we show that if the conditions of Theorem \ref{thm:MainTheorem} are satisfied, then we find a sublattice $M\subset L$ such that $\uparrow_{L/M}^{M'/M}(f)$ has the required symmetry only if $f=0$. Thus, $\Phi_\D$ is injective.

For $k=m/2+\h$ with $\h>0$ we apply a certain differential operator $\partial_\h$ to the genus theta series and then proceed analogously to the case $k=m/2$.%, which yields a $\vartheta_{G,k}\in\Theta_{m,k}(\mathcal{D})\otimes\Theta_{m,k}(\mathcal{D})$. Similar to above, we define a map $\Phi_{\mathcal{D},m,k}:\mathrm{S}_k(\mathcal{D})\rightarrow\Theta_{m,k}(\mathcal{D})$ by
%\begin{align*}
%	\Phi_{\mathcal{D},m,k}(f)(z') = \int_{\SL\backslash\mathbb{H}}\langle f,\vartheta_{G,k}(z,-\overline{z'})\rangle y^k\frac{\mathrm{d}x\mathrm{d}y}{y^2}.
%\end{align*}
%Using the same line of reasoning as for $\h = 0$ and the fact that $\partial_\h$ commutes with $|_k[M]$ for certain $M\in\Gamma^{(2)}$, we also find that
%\begin{align*}
%	\Phi_{\mathcal{D},m,k} = C(m,k)\frac{e(-\sign(\mathcal{D})/8)}{\sqrt{|\mathcal{D}|}}\sum_{\n=1}^\infty \frac{T(\n^2)}{\n^{2k-2-\h}}.
%\end{align*}
%From this point the proof is identical to the case $\h=0$.

\medskip

As first application of our main theorem we show how Waldspurger's result can be derived from it. Secondly we prove that the space of (geometrically) local obstructions for constructing Borcherds products generates the space of global obstructions. This gives a condition for whether a given divisor is the divisor of a Borcherds product that depends only on its behaviour on the boundary components.

\medskip

The paper is organized as follows. In Section \ref{sec:TheWeilRepresentation} we recall some results about discriminant forms, the Weil representation and modular forms. In Section \ref{sec:VectorValuedHeckeOperators} we define vector valued Hecke operators and study some of their properties. Then we investigate the relation between Eisenstein series and Petersson's integral kernels of Hecke operators. In Section \ref{sec:SiegelWeil} we show that the genus theta series is equal to an Eisenstein series using the adelic Siegel--Weil formula. In Section \ref{sec:ThetaFunctions} we employ the doubling method to prove the main theorem of this paper by applying the results of the previous sections. Finally, we describe the two aforementioned applications of our main result.

\medskip

I would like to thank my advisor N.\ Scheithauer for suggesting this topic as part of my doctoral thesis and for the support he has given me in pursuing it. Moreover, I would like to thank J.\ H.\ Bruinier, P.\ Kiefer, I.\ Metzler and R.\ Zufetti for stimulating discussions and helpful comments. I also thank C.\ Huang for pointing out an error in Lemma \ref{lem:ZeroForSomeP} in an earlier version of this paper, which has been corrected in the present version.

\medskip

The author acknowledges support by Deutsche Forschungsgemeinschaft (DFG, German Research Foundation) through the Collaborative Research Centre TRR 326 \textit{Geometry and Arithmetic of Uniformized Structures}, project number 444845124.

\section{The Weil representation}\label{sec:TheWeilRepresentation}

In this section we recall some results on discriminant forms, the Weil representation and modular forms.

\subsection*{Discriminant forms}

References for discriminant forms are\ 
%\cite{AGM}, 
\cite {AGM}, \cite{Bo3}, \cite{CS}, \cite{N}, \cite{S2} and \cite{Sk2}.

A discriminant form is a finite abelian group $\mathcal{D}$ with a quadratic form $\q : \mathcal{D} \to \Q/\Z$ such that $(\beta,\gamma) = \q(\beta + \gamma)- \q(\beta) - \q(\gamma) \bmod 1$ is a non-degenerate symmetric bilinear form. The level of $\mathcal{D}$ is the smallest positive integer $N$ such that
$N\q(\gamma) = 0 \bmod 1$ for all $\gamma \in \mathcal{D}$. An element $\gamma\in \mathcal{D}$ is called isotropic if $\q(\gamma) = 0\bmod1$ and anisotropic otherwise.

If $L$ is an even lattice, then $L'/L$ is a discriminant form with the quadratic form given by $\q(\gamma) = (\gamma,\gamma)/2 \bmod 1$. Conversely every discriminant form can be obtained in this way. The corresponding lattice can be chosen to be positive-definite. The signature $\sign(\mathcal{D}) \in \Z/8\Z$ of a discriminant form $\mathcal{D}$ is defined as $\sign(\mathcal{D}) = t_+-t_-\bmod8$, where $(t_+,t_-)$ is the signature of any even lattice realizing $\D$.

%Every discriminant form decomposes into a sum of Jordan components and every Jordan component can be written as a sum of indecomposable Jordan components (usually not uniquely). %For the Jordan components we use the notation introduced by Conway and Sloane (cf.\ \cite{CS}, chapter 15). Recall that for $q$ a power of an odd prime $p$ the symbol $q^{\pm n}$ denotes a discriminant form that, as a group, is isomorphic to $(\Z/q\Z)^n$ and has level $q$. For $q$ a power of $2$ the symbol $q_{I\!I}^{\pm 2n}$ denotes a discriminant form that, as a group, is isomorphic to $(\Z/q\Z)^{2n}$ and has level $q$ and $q_{t}^{\pm n}$ denotes a discriminant form that, as a group, is isomorphic to $(\Z/q\Z)^{n}$ and has level $2q$. The former are called the even $2$-adic components, the latter the odd $2$-adic components.

Let $c$ be an integer. Then $c$ acts by multiplication on $\mathcal{D}$ and we have an exact sequence
$0 \to \mathcal{D}_c \to \mathcal{D} \to \mathcal{D}^c \to 0$,
%\[ 0 \to D_c \to D \to D^c \to 0  \]
where $\mathcal{D}_c$ is the kernel and $\mathcal{D}^c$ the image of this map. Note that $\mathcal{D}^c$ is the orthogonal complement of $\mathcal{D}_c$.

The set $\mathcal{D}^{c*} = \{ \gamma \in \mathcal{D} \, | \, c\q(\alpha) + (\alpha,\gamma) = 0\bmod1 \, \text{ for all } \alpha \in \mathcal{D}_c \}$ is a coset of $\mathcal{D}^c$. After a choice of Jordan decomposition there is a canonical coset representative $x_c \in \mathcal{D}$ with $2x_c=0$. We can write $\gamma \in \mathcal{D}^{c*}$ as $\gamma = x_c + c \mu$. Then $\q_c(\gamma) = c\q(\mu) + (x_c, \mu) \bmod 1$ is well-defined. If $c$ is even, then $\mathcal{D}_{c/2}\subset\mathcal{D}_c$ and for $\alpha\in\mathcal{D}_{c/2}$ we have $c\q(\alpha) = 0\bmod1$, so that $\mathcal{D}^{c*}\subset\{\gamma\in\mathcal{D}\mid (\alpha,\gamma)=0\bmod1 \, \text{for all } \alpha\in\mathcal{D}_{c/2}\} = \mathcal{D}^{c/2}$.

Let $\mathcal{D}$ be a discriminant form of level $N$ and let $N=\prod_{p\mid N}p^{\nu_p}$ be the prime decomposition of $N$. Then $\mathcal{D}$ decomposes into the orthogonal sum of its $p$-subgroups 
\[  \mathcal{D} = \bigoplus_{p|N} \mathcal{D}_{p^{\nu_p}} \, .  \]

For a prime $p$ we define the $\text{$p$-rank}(\mathcal{D})$ of $\mathcal{D}$ as the largest non-negative integer $r$ such that $(\Z/p\Z)^r$ is isomorphic to a subgroup of $\mathcal{D}_{p^{\nu_p}}$. % and the $\text{$p$-sign}$ of $\mathcal{D}$ is defined as
%\begin{align*}
%	\text{$p$-sign}(\mathcal{D}) &\coloneqq \prod_{q}\epsilon_q \quad \text{, where} \\
%	\mathcal{D}_{p^{\nu_p}} &= \bigoplus_qq^{\epsilon_qn_q}.
%\end{align*}
%When $\mathcal{D}$ comes from a positive-definite lattice, then for each $p$, the $\text{$p$-rank}$ of $\mathcal{D}$ is bounded from above by the rank of the lattice. If they are the same, then the $\text{$p$-sign}$ of $\mathcal{D}$ is equal to $\leg{a}{p}$, where $|\mathcal{D}| = p^\alpha a$ and $(a,p) = 1$ (see \cite[chapter 15, section 7.7]{CS}).

The formula
\[ \sum_{\gamma\in\D}e(\q(\gamma)) = e(\sign(\D)/8)\sqrt{|\D|} \]
is known as Milgram's formula. See \cite{S2} for more general Gauss sums.

\subsection*{The symplectic group $\Spn{2\g}$}

Now we want to recall some facts about the symplectic group. A nice reference is \cite{F2}.

Let $\g\in\Z_{>0}$ and let $J  = J_\g = \left( \begin{smallmatrix} 0 & I_\g \\ -I_\g & 0 \end{smallmatrix} \right)$, where $I=I_\g$ is the identity matrix of rank $\g$. The symplectic group $\Spn{2\g}$ is defined as
\begin{align*}
    \Gamma^{(\g)}\coloneqq\Spn{2\g} \coloneqq \{M\in\mathrm{GL}_{2\g}(\Z)\mid M^TJM = J\}.
\end{align*}
A matrix $M=\left(\begin{smallmatrix} A & B \\ C & D\end{smallmatrix}\right)$ with $A,B,C,D\in\mathrm{Mat}_\g(\Z)$ is in $\Spn{2\g}$ if and only if
\begin{align*}
    A^TD-C^TB = D^TA-B^TC = I,\quad A^TC=C^TA,\quad B^TD=D^TB
\end{align*}
or equivalently
\begin{align*}
    AD^T-BC^T = DA^T-CB^T = I,\quad AB^T=BA^T,\quad CD^T=DC^T,
\end{align*}
in particular $\Spn{2} = \SL$. Furthermore, for $M\in\Spn{2\g}$ also $M^T\in\Spn{2\g}$ and $\det(M) = 1$. We have $J^{-1} = J^T = -J$ and in general the inverse of a symplectic matrix $M=\left(\begin{smallmatrix} A & B \\ C & D\end{smallmatrix}\right)$ (notation as above) is given by
\begin{align*}
    M^{-1} = J^{-1}M^TJ = \begin{pmatrix} D^T & -B^T \\ -C^T & A^T \end{pmatrix}.
\end{align*}
We define maps
\begin{align*}
    &n:\mathrm{Sym}_\g(\Z)\rightarrow\Gamma^{(\g)},\quad n(S) = \begin{pmatrix}I & S \\ 0 & I \end{pmatrix} \\
    &a:\GL_\g(\Z)\rightarrow\Gamma^{(\g)},\quad a(U) = \begin{pmatrix}U & 0 \\ 0 & (U^T)^{-1} \end{pmatrix} \\
    &u:\Gamma^{(\g-1)}\rightarrow\Gamma^{(\g)},\quad u\left(\begin{pmatrix} A & B \\ C & D\end{pmatrix}\right) = \begin{pmatrix}A & 0 & B & 0 \\ 0 & 1 & 0 & 0 \\ C & 0 & D & 0 \\ 0 & 0 & 0 & 1 \end{pmatrix} \\
    &d:\Gamma^{(\g-1)}\rightarrow\Gamma^{(\g)},\quad d\left(\begin{pmatrix} A & B \\ C & D\end{pmatrix}\right) = \begin{pmatrix}1 & 0 & 0 & 0 \\ 0 & A & 0 & B \\ 0 & 0 & 1 & 0 \\ 0 & C & 0 & D \end{pmatrix}.
\end{align*}
Then $n,a,u$ and $d$ are group homomorphisms and $u(M)d(M') = d(M')u(M)$. The symplectic group $\Gamma^{(\g)}$ is generated by $J_\g$ and matrices of the form $n(S)$ for $S\in\mathrm{Sym}_\g(\Z)$. The subgroup
\begin{align*}
    \Gamma_\infty^{(\g)} \coloneqq \left\{\begin{pmatrix} A & B \\ 0 & D\end{pmatrix}\in\Gamma^{(\g)}\right\}
\end{align*}
is generated by elements of the form $n(S)$ and $a(U)$. In the case $\g = 2$ we will later need the matrix
\begin{align*}
	\mathcal{A}_{\n} &\coloneqq J_2\cdot n\left(\begin{pmatrix} 0 & -1 \\ -1 & -\n \end{pmatrix}\right)J_2\cdot n\left(\begin{pmatrix} \n^2+\n & -\n-1 \\ -\n-1 & 1 \end{pmatrix}\right)J_2\cdot n\left(\begin{pmatrix} 0 & 0 \\ 0 & 1 \end{pmatrix}\right) \\
	&= \begin{pmatrix}
		\n^2+\n & -\n-1 & -1 & -\n-1 \\
		-\n-1    & 1   & 0 & 0   \\
		-\n      & 1  & 0 & 0   \\
		0      & 0   & -1 & -\n
	\end{pmatrix}.
\end{align*}

\subsection*{The Weil representation of $\Spn{2\g}$}

Let $\mathcal{D}$ be a discriminant form of even signature and level $N$ and let $\C[\mathcal{D}^\g]$ be the group algebra of $\mathcal{D}^\g = \mathcal{D}\times\hdots\times \mathcal{D}$ spanned by a formal basis $(e^{\underline{\gamma}})_{\underline{\gamma}\in \mathcal{D}^\g}$, where $\underline{\gamma} = (\gamma_1,\hdots,\gamma_\g)$. Then the \textit{Weil representation} $\rho_\mathcal{D}^{(\g)}:\Spn{2\g}\to\C[\D^\g]$ of $\Spn{2\g}$ can be defined by (cf.\ \cite{W1}, \cite{Bo1} and \cite{Zh})
\begin{align*} 
\rho_\mathcal{D}^{(\g)}(n(S)) e^{\underline{\gamma}}  & = e(1/2\tr(S(\underline{\gamma},\underline{\gamma})))e^{\underline{\gamma}} \\
\rho_\mathcal{D}^{(\g)}(J) e^{\underline{\gamma}}  & = \frac{e(\g\sign(\mathcal{D})/8)}{\sqrt{|\mathcal{D}|^\g}}
                  \sum_{\underline{\beta}\in \mathcal{D}^\g} e(\tr(\underline{\gamma},\underline{\beta}))\, e^{\underline{\beta}},
\end{align*}
where $e(z) \coloneqq e^{2\pi i z}$ and $(\underline{\gamma},\underline{\beta}) = ((\gamma_i,\beta_j))_{i,j=1}^\g\in\mathrm{Mat}_\g(\Q/\Z)$. This implies
\[ \rho_\mathcal{D}^{(\g)}(a(U)) e^{\underline{\gamma}} = \det(U)^{\sign(\mathcal{D})/2}e^{\underline{\gamma}U^{-1}} \]
and in particular $\rho_\mathcal{D}^{(\g)}(-I) e^{\underline{\gamma}} = e(\g\sign(\mathcal{D})/4)e^{-\underline{\gamma}}$. %The subgroup 
%\begin{align*}
%	\Gamma^{(\g)}(N) \coloneqq \left\{\begin{pmatrix} A & B \\ C & D\end{pmatrix}\in\Gamma^{(\g)}\bigm\vert\begin{pmatrix} A & B \\ C & D\end{pmatrix}=\begin{pmatrix} I & 0 \\ 0 & I\end{pmatrix}\bmod N\right\}
%\end{align*}
%acts trivially in the Weil representation.

We define a scalar product on the group algebra $\C[\mathcal{D}^\g]$ which is linear in the first and antilinear in the second variable by
\begin{align*}
    \langle e^{\underline{\gamma}},e^{\underline{\beta}}\rangle = \begin{cases} 1 & \text{if $\underline{\gamma} = \underline{\beta}$} \\ 0 & \text{otherwise}.\end{cases}
\end{align*}
Then the Weil representation is unitary with respect to this scalar product. There is a natural isomorphism $\C[\D^\g]\cong \C[\D]^{\otimes n}$ that we will make frequent use of. For the following cf.\ \cite[Lemma 3.4]{St}.
\begin{prp}\label{prp:ActionOfAAndD}
For a symplectic matrix $M\in\Gamma^{(\g-1)}$ the symplectic matrices $u(M)$ and $d(M)$ transform in the Weil representation as
\begin{align*}
    \rho_\mathcal{D}^{(\g)}(u(M))e^{(\gamma_1,\hdots,\gamma_\g)} &= \rho_\mathcal{D}^{(\g-1)}(M)e^{(\gamma_1,\hdots,\gamma_{\g-1})}\otimes e^{\gamma_\g} \\
    \rho_\mathcal{D}^{(\g)}(d(M))e^{(\gamma_1,\hdots,\gamma_\g)} &= e^{\gamma_1}\otimes\rho_\mathcal{D}^{(\g-1)}(M)e^{(\gamma_2,\hdots,\gamma_\g)}.
\end{align*}
\end{prp}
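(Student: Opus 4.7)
Both displayed identities compare two representations of $\Gamma^{(\g-1)}$ on $\C[\D^\g]$: the left-hand sides $M \mapsto \rho_\D^{(\g)}(u(M))$ and $M \mapsto \rho_\D^{(\g)}(d(M))$ are representations because $u$ and $d$ are group homomorphisms, and the right-hand sides are the tensor-product representations $\rho_\D^{(\g-1)} \otimes \mathrm{id}$ and $\mathrm{id} \otimes \rho_\D^{(\g-1)}$ under the canonical isomorphism $\C[\D^\g] \cong \C[\D^{\g-1}] \otimes \C[\D]$. Since $\Gamma^{(\g-1)}$ is generated by the translations $n(S)$ with $S \in \mathrm{Sym}_{\g-1}(\Z)$ together with $J_{\g-1}$ (a fact recalled earlier in the paper), it suffices to verify the identities on these generators.

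For the translation generators the check is immediate. The image $u(n(S))$ equals $n(S^\uparrow)$, where $S^\uparrow = \left(\begin{smallmatrix} S & 0 \\ 0 & 0 \end{smallmatrix}\right) \in \mathrm{Sym}_\g(\Z)$ is the extension of $S$ by a zero row and column. The defining formula for $\rho_\D^{(\g)}(n(S^\uparrow))$ produces the phase $e\bigl(\tfrac{1}{2}\tr(S^\uparrow(\underline{\gamma},\underline{\gamma}))\bigr)$, which, because of the zero blocks, equals $e\bigl(\tfrac{1}{2}\tr(S(\underline{\gamma}',\underline{\gamma}'))\bigr)$ with $\underline{\gamma}' = (\gamma_1, \ldots, \gamma_{\g-1})$. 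This matches the action of $\rho_\D^{(\g-1)}(n(S)) \otimes \mathrm{id}$ on $e^{\underline{\gamma}'} \otimes e^{\gamma_\g}$. The case $d(n(S))$ is symmetric, with $S^\uparrow$ now placed in the bottom-right corner.

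The main work, and the main obstacle, is the generator $J_{\g-1}$, because $u(J_{\g-1})$ is not itself one of the standard Weil generators $n(S)$ or $J_\g$ of $\Gamma^{(\g)}$. The key observation is that the defining formula for $\rho_\D^{(\g)}(J_\g)$ factors directly as a tensor product of one-variable Fourier transforms, $\rho_\D^{(\g)}(J_\g) = \rho_\D(J)^{\otimes \g}$, since $\tr(\underline{\gamma}, \underline{\beta}) = \sum_i(\gamma_i, \beta_i)$ splits the $\underline{\beta}$-sum over $\D^\g$ into a product of sums over $\D$. A direct matrix multiplication then verifies the factorizations $J_\g = u(J_{\g-1}) \cdot d_*(J_1) = d(J_{\g-1}) \cdot u_*(J_1)$, where $d_*, u_*\colon\Spn{2}\to\Gamma^{(\g)}$ embed $\Spn{2}$ on the last (resp.\ first) symplectic coordinate with identity elsewhere. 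Writing $d_*(J_1)$ as an explicit product of standard generators of $\Gamma^{(\g)}$ using the $\SL$ identity $J_1 = n(1)\,\ell\,n(1)$ with $\ell = \left(\begin{smallmatrix} 1 & 0 \\ -1 & 1\end{smallmatrix}\right)$, and then tracking the action through $\rho_\D^{(\g)}$, yields $\rho_\D^{(\g)}(d_*(J_1)) = \mathrm{id}^{\otimes(\g-1)} \otimes \rho_\D(J)$, and analogously $\rho_\D^{(\g)}(u_*(J_1)) = \rho_\D(J) \otimes \mathrm{id}^{\otimes(\g-1)}$. Cancellation in the two factorizations of $J_\g$ then gives $\rho_\D^{(\g)}(u(J_{\g-1})) = \rho_\D(J)^{\otimes(\g-1)} \otimes \mathrm{id} = \rho_\D^{(\g-1)}(J_{\g-1}) \otimes \mathrm{id}$, and analogously for $d$. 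The technical challenge throughout is the careful bookkeeping of signs and of the $e(\sign(\D)/8)$ prefactors in the Weil formula for $J_\g$, ensuring they combine correctly under the factorization.
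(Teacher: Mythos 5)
Your proposal is correct and, for the $n(S)$ generators, identical to the paper's argument; but your handling of the generator $J_{\g-1}$ takes a genuinely different route. The paper writes $u(J_{\g-1})$ and $d(J_{\g-1})$ directly as explicit words in $n(\cdot)$ and $J_\g$ and then verifies the claimed action by a direct computation, which ultimately requires evaluating Gauss sums. You instead factor $J_\g = u(J_{\g-1})\cdot d_*(J_1)$, note that the defining formula gives the tensor factorizations $\rho_\D^{(\g)}(J_\g) = \rho_\D(J)^{\otimes\g}$ and $\rho_\D^{(\g-1)}(J_{\g-1}) = \rho_\D(J)^{\otimes(\g-1)}$, and divide; the only matrix that then needs to be written as a word in the standard generators is the single-coordinate embedding $d_*(J_1)$. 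Since that word consists of elements ($J_\g$ and $n(E)$ with $E$ supported in the $(\g,\g)$ entry) whose Weil action is $\mathrm{id}^{\otimes(\g-1)}$ tensored with a one-variable operator, the identity $\rho_\D^{(\g)}(d_*(J_1)) = \mathrm{id}^{\otimes(\g-1)}\otimes\rho_\D(J)$ becomes a formal consequence of $\rho_\D^{(1)}$ being a representation of $\SL$, so no Gauss sums or Milgram's formula are needed at all --- a real simplification over the paper's verification. The one thin spot in your sketch: the identity $J_1 = n(1)\,\ell\,n(1)$ with $\ell = \matr{1}{0}{-1}{1}$ does not by itself express $d_*(J_1)$ as a word in the standard generators, because $\ell$ is lower triangular; you also need $d_*(\ell) = J_\g\, n(E)\, J_\g^{-1}$, which turns $d_*(J_1)$ into the word $n(E)\,J_\g\,n(E)\,J_\g^{-1}\,n(E)$ and lets the tensor factorization (and the reduction to the corresponding relation in $\SL$) go through. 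With that step supplied, your argument is complete.
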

\begin{proof}
    It suffices to prove the assertion for the generators $n(S)$ and $J_{\g-1}$ of $\Gamma^{(\g-1)}$. For $S\in\mathrm{Sym}_{\g-1}(\Z)$ we have
    \begin{align*}
        u(n(S)) &= n(S_1) \\
        d(n(S)) &= n(S_2)
    \end{align*}
    with $S_1 = \left(\begin{smallmatrix} S & 0 \\ 0 & 0\end{smallmatrix}\right)$ and $S_2 = \left(\begin{smallmatrix} 0 & 0 \\ 0 & S\end{smallmatrix}\right)$ and the identity is trivial. Furthermore, we find
    \begin{align*}
        u(J_{\g-1}) &= -n\left(\begin{pmatrix} I_{\g-1} & 0 \\ 0 & -1 \end{pmatrix}\right)J_\g n\left(\begin{pmatrix} I_{\g-1} & 0 \\ 0 & 0 \end{pmatrix}\right)J_\g n(I_\g) \\
        d(J_{\g-1}) &= -n\left(\begin{pmatrix} -1 & 0 \\ 0 & I_{\g-1} \end{pmatrix}\right)J_\g n\left(\begin{pmatrix} 0 & 0 \\ 0 & I_{\g-1} \end{pmatrix}\right)J_\g n(I_\g)
    \end{align*}
	and verify that these transform as claimed.
\end{proof}

%For the case $\g=1$ the action of an arbitrary $M\in\SL$ was computed in \cite{S2} and is given by
%\begin{thm}[Theorem 4.7, \cite{S2}]\label{thm:ActionOfArb}
%Let $\mathcal{D}$ be a discriminant form of even signature and $M=\left(\begin{smallmatrix} a & b \\ c & d \end{smallmatrix}\right)\in\SL$. Then $M$ acts in the Weil representation of $\mathcal{D}$ as
%\begin{align*}
%    \rho_\mathcal{D}^{(1)}(M)e^\gamma = \xi\frac{\sqrt{|\mathcal{D}_c|}}{\sqrt{|\mathcal{D}|}}\sum_{\beta\in \mathcal{D}^{c*}}e(a\q_c(\beta))e(b(\beta,\gamma))e(bd\q(\gamma))e^{d\gamma+\beta},
%\end{align*}
%where $\xi$ is some eighth root of unity depending on $M$.
%\end{thm}

Let $H$ be an isotropic subgroup of $\mathcal{D}$. Then $H^\bot/H$ is a discriminant form of the same signature as $\mathcal{D}$ and order $|H^\bot/H| = |\mathcal{D}|/|H|^2.$ There is an \textit{isotropic lift}
\[ \uparrow_H\coloneqq\uparrow_H^{(\g)} : \C[(H^{\perp}/H)^\g] \to \C[\mathcal{D}^\g]  \]
defined by
\[  \uparrow_H(e^{\underline{\gamma}+H^\g}) = \sum_{\underline{\mu} \in H^\g} e^{\underline{\gamma} + \underline{\mu}}   \]
for $\underline{\gamma}\in (H^{\bot})^\g$ and an \textit{isotropic descent}\label{eq:IsotropicDescent} 
\[ \downarrow_H \coloneqq \downarrow_H^{(\g)} : \C[\mathcal{D}^\g] \to \C[(H^{\bot}/H)^\g] \]
defined by
\[ \downarrow_H(e^{\underline{\gamma}}) = 
\begin{cases}
	\, e^{\underline{\gamma}+H^\g} & \text{if $\underline{\gamma}\in (H^{\bot})^\g$}, \\
	\,          0 & \text{otherwise} 
\end{cases} 
\]
(cf.\ for example \cite{Br}, \cite{S3} or \cite{S4}). The following result is easy to prove.

\begin{prp}
	Let $\mathcal{D}$ be a discriminant form of even signature and $H$ an isotropic subgroup of $\mathcal{D}$. Then the maps $\uparrow_H^{(\g)}$ and $\downarrow_H^{(\g)}$ are adjoint with respect to the inner products on $\C[(H^{\perp}/H)^\g]$ and $\C[\mathcal{D}^\g]$ and commute with the Weil representations $\rho_{H^{\perp}/H}^{(\g)}$ and $\rho_\mathcal{D}^{(\g)}$. In particular they map modular forms to modular forms. This implies that $\uparrow_H^{(\g)}$ and $\downarrow_H^{(\g)}$ as maps of modular forms are adjoint with respect to the Petersson scalar product (defined below).
\end{prp}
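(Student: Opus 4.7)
The plan is to verify the four assertions in turn: adjointness of $\uparrow_H^{(\g)}$ and $\downarrow_H^{(\g)}$ as linear maps between the group algebras, commutation with the respective Weil representations, the consequent mapping of modular forms to modular forms, and finally Petersson adjointness.

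For linear adjointness I would evaluate both sides on basis vectors. For $\underline{\gamma}\in (H^\bot)^\g$ and $\underline{\beta}\in\mathcal{D}^\g$, the pairing $\langle \uparrow_H(e^{\underline{\gamma}+H^\g}), e^{\underline{\beta}}\rangle$ equals $1$ precisely when $\underline{\beta}-\underline{\gamma}\in H^\g$ and $0$ otherwise, while $\langle e^{\underline{\gamma}+H^\g}, \downarrow_H(e^{\underline{\beta}})\rangle$ equals $1$ precisely when $\underline{\beta}\in (H^\bot)^\g$ and $\underline{\beta}+H^\g=\underline{\gamma}+H^\g$. These two conditions coincide since $\underline{\gamma}\in(H^\bot)^\g$ and $H\subset H^\bot$.

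For commutation with the Weil representation, since $\Spn{2\g}$ is generated by the matrices $n(S)$ and $J$, I would check the identity $\uparrow_H\circ\rho_{H^\bot/H}^{(\g)}(M)=\rho_\mathcal{D}^{(\g)}(M)\circ\uparrow_H$ only on these two types of generators; the analogous statement for $\downarrow_H$ then follows by taking adjoints and using unitarity of the Weil representation together with the preceding adjointness step. For $M=n(S)$ the isotropy of $H$ and the inclusion $H\subset H^\bot$ give $\q(\mu_i)\equiv(\mu_i,\mu_j)\equiv(\gamma_i,\mu_j)\equiv 0\bmod 1$ for $\underline{\gamma}\in(H^\bot)^\g$ and $\underline{\mu}\in H^\g$, so that $\tfrac{1}{2}\tr(S(\underline{\gamma}+\underline{\mu},\underline{\gamma}+\underline{\mu}))\equiv \tfrac{1}{2}\tr(S(\underline{\gamma},\underline{\gamma}))\bmod 1$, and both compositions multiply $\uparrow_H(e^{\underline{\gamma}+H^\g})$ by the same scalar.

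The $J$-case is the main bookkeeping step. Expanding $\rho_\mathcal{D}^{(\g)}(J)\uparrow_H(e^{\underline{\gamma}+H^\g})$ yields a double sum over $\underline{\mu}\in H^\g$ and $\underline{\alpha}\in\mathcal{D}^\g$; the inner character sum $\sum_{\underline{\mu}\in H^\g}e(\tr(\underline{\mu},\underline{\alpha}))$ evaluates to $|H|^\g$ when $\underline{\alpha}\in (H^\bot)^\g$ and vanishes otherwise. Using the identity $|H^\bot/H|=|\mathcal{D}|/|H|^2$, the prefactor collapses from $|H|^\g/\sqrt{|\mathcal{D}|^\g}$ to $1/\sqrt{|H^\bot/H|^\g}$, and the remaining sum over $\underline{\alpha}\in (H^\bot)^\g$ is exactly $\uparrow_H\bigl(\rho_{H^\bot/H}^{(\g)}(J)e^{\underline{\gamma}+H^\g}\bigr)$. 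The last two assertions are then immediate: commutation with the Weil representation together with the fact that $\uparrow_H$ and $\downarrow_H$ act fibrewise in $\tau$ and preserve growth at the cusp gives the modularity claim, and integrating the pointwise identity $\langle \uparrow_H f(\tau), g(\tau)\rangle=\langle f(\tau),\downarrow_H g(\tau)\rangle$ against the invariant measure over a fundamental domain yields Petersson adjointness.
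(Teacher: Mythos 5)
Your proof is correct and supplies exactly the routine verification that the paper omits (the paper only remarks that the result ``is easy to prove'' and gives no argument): adjointness on basis vectors, commutation checked on the generators $n(S)$ and $J$ with the character sum over $H^\g$ and the identity $|\mathcal{D}|=|H|^2|H^\bot/H|$, and the remaining claims as formal consequences. The only point worth phrasing more carefully is the $n(S)$ step, where $\tfrac{1}{2}\tr(S(\underline{\gamma},\underline{\gamma}))$ should be read as $\sum_{i<j}S_{ij}(\gamma_i,\gamma_j)+\sum_i S_{ii}\q(\gamma_i)$ so that the diagonal is controlled by $\q(\gamma_i+\mu_i)\equiv\q(\gamma_i)$ rather than by halving a quantity only defined modulo $1$; with that reading your listed congruences give precisely what is needed.
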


For two discriminant forms $\mathcal{D}$, $\mathcal{D}'$ with quadratic forms $\q$ and $\q'$ we define
\begin{align*}
	\Iso(\mathcal{D},\mathcal{D}') &\coloneqq \{\sigma:\mathcal{D}\to \mathcal{D}'\mid\sigma\text{ is a group isomorphism with} \\
	&\qquad\qquad\q'(\sigma(\gamma)) = \q(\gamma)\bmod1 \text{ for all } \gamma\in\mathcal{D}\}
\end{align*}
and a $\sigma\in\Iso(\mathcal{D},\mathcal{D}')$ induces a \textit{pullback} $\sigma^* \coloneqq \sigma^{*(\g)}:\C[\mathcal{D}^{\prime \g}]\to\C[\mathcal{D}^\g]$ and a \textit{pushforward} $\sigma_* \coloneqq \sigma_*^{(\g)}:\C[\mathcal{D}^\g]\to\C[\mathcal{D}^{\prime \g}]$ by
\begin{align*}
	\sigma^*e^{(\gamma_1,\hdots,\gamma_\g)} &= e^{(\sigma^{-1}\gamma_1,\hdots,\sigma^{-1}\gamma_\g)} \text{ and} \\
	\sigma_*e^{(\gamma_1,\hdots,\gamma_\g)} &= e^{(\sigma\gamma_1,\hdots,\sigma\gamma_\g)}
\end{align*}
respectively. The pushforward defines a unitary representation of $\Ort(\mathcal{D}) \coloneqq \Iso(\mathcal{D},\mathcal{D})$ on $\C[\D^\g]$ that commutes with the Weil representation.

For an isotropic subgroup $H\subset \mathcal{D}'$ and $\sigma\in\Iso(\mathcal{D},\mathcal{D}')$ we easily check that $\Tilde{\sigma}(\gamma+\sigma^{-1}H) \coloneqq \sigma\gamma + H$ defines an element $\Tilde{\sigma}\in\Iso((\sigma^{-1} H)^\bot/(\sigma^{-1} H),H^\bot/H)$.\label{pg:sigmaTilde} This implies that the diagram
\[ \begin{tikzcd}%[row sep=large]
	\C[\D^\g] \arrow{r}{\sigma_*}\arrow[swap]{d}{\downarrow_{\sigma^{-1}H}} & \C[\D^{\prime\g}]\arrow{d}{\downarrow_H} \\
	\C[((\sigma^{-1} H)^\bot/(\sigma^{-1} H))^\g] \arrow{r}{\Tilde{\sigma}_*} & \C[(H^\bot/H)^\g] 
\end{tikzcd}
\]
commutes.

\subsection*{Modular forms for the Weil representation}\label{subsec:ModularFormsForTheWeilRepresentation}

We will now define modular forms for the Weil representation of $\Spn{2\g}$ generalizing the definition from \cite{F2} to the vector valued case (cf.\ also \cite{Br} and \cite{Zh}).
Let
\begin{align*}
    \mathbb{H}_\g \coloneqq \{Z\in\mathrm{Sym}_\g(\C):\Imp(Z)\in\mathrm{Pos}_\g(\R)\}
\end{align*}
be the Siegel upper half space. Let $k\in\Z$ and $f$ be a function from $\mathbb{H}$ to a complex vector space. Let $M=\left(\begin{smallmatrix} A & B \\ C & D\end{smallmatrix}\right)\in\GL_{2\g}(\R)$ be a symplectic similitude matrix, i.e.\ $M^TJ_\g M = \n J_\g$ for some $\n\in\R_{>0}$. We define the Petersson-slash operator $|_k$ by
\begin{equation*}
    (f|_k[M])(Z) = \det(M)^{k/2}\det(CZ+D)^{-k}f((AZ+B)(CZ+D)^{-1}).
\end{equation*}
\begin{defi}
	Let $\mathcal{D}$ be a discriminant form of even signature and level $N$ and let $k\in\Z$. A function $f:\mathbb{H}_\g \to \C[\mathcal{D}^\g]$ is called a modular form of weight $k$ with respect to $\rho_\mathcal{D}^{(\g)}$ and $\Spn{2\g}$ if
	\begin{enumerate}[(i)]
		\item $f|_k[M] = \rho_\mathcal{D}^{(\g)}(M)f$ for all $M\in\Spn{2\g}$,
		\item $f$ is holomorphic on $\mathbb{H}_\g$,
		\item $f$ is bounded on all domains of type $\Imp(Z)\geq Y_0,Y_0>0$.
	\end{enumerate}
	When $\g>1$ condition (iii) already follows from (i) and (ii) by the Koecher principle. Condition (iii) is equivalent to the fact that $f$ has a Fourier expansion of the form
	\begin{equation*}
		f(Z) = \sum_{\underline{\gamma}\in \mathcal{D}^\g}\sumstack{S=S^T\text{ even} \\ S\geq0} c(\underline{\gamma},S)e\left(\frac{\tr(SZ)}{2N}\right)e^{\underline{\gamma}}.
	\end{equation*}
	Moreover, if $c(\underline{\gamma},S)\not=0$ implies $S>0$, then $f$ is called a cusp form. The
	$\C$-vector space of modular forms of weight $k$ with respect to $\rho_\mathcal{D}^{(\g)}$ and $\Spn{2\g}$ is denoted by $\mathrm{M}_k^{(\g)}(\mathcal{D})$, the subspace of cusp forms by $\mathrm{S}_k^{(\g)}(\mathcal{D})$. 
\end{defi}
In the special case $\g=1$ the Fourier expansion takes the form
\begin{equation*}
    f(z) = \sum_{\gamma\in \mathcal{D}}\sumstack{n\in\Z+\q(\gamma) \\ n\geq0} c(\gamma,n)e(nz)e^\gamma.
\end{equation*}
For $f,g\in\mathrm{M}_k^{(\g)}(\D)$, where at least one of $f$ and $g$ is in $\mathrm{S}_k^{(\g)}(\mathcal{D})$ we define the \textit{Petersson inner product} $(\cdot,\cdot) = (\cdot,\cdot)^{(\g)}$ by
\begin{align*}
    (f,g) = \int_{\Gamma^{(\g)}\backslash\mathbb{H}_\g}\langle f(Z),g(Z)\rangle \det(Y)^k\frac{\mathrm{d}X\mathrm{d}Y}{\det(Y)^{\g+1}},
\end{align*}
where $Z = X+iY$. On $\mathrm{S}_k^{(\g)}(\mathcal{D})$ the Petersson inner product defines a scalar product.

\medskip

We now want to define theta series weighted with harmonic polynomials. Again we generalize the definition in \cite{F2} to the vector valued case in the same way as was done for $\g=1$ for example in \cite{Bo1}.

\begin{defi}
	A \textit{harmonic form} of degree $\h$ in the matrix variable $X = (x_{ij})_{i=1,\hdots,m;j=1,\hdots,\g}$ is a complex polynomial $P(X)$ with the properties
	\begin{enumerate}[(i)]
		\item $P(XA) = (\det A)^\h P(X)$ for $A\in\C^{\g\times\g}$,
		\item $\Delta P = \sum_{i,j}\frac{\partial^2}{(\partial x_{i,j})^2}P = 0$.
	\end{enumerate}
\end{defi}

Let $L$ be a positive-definite even lattice of even rank $m$ with dual lattice $L'$ and bilinear form $(\cdot,\cdot)$. We can choose an embedding $L\subset \R^m$ such that $(\cdot,\cdot)$ extends to the standard scalar product on $\R^m$. Let $P$ be a harmonic form of degree $\h\geq0$. We define the theta series $\theta_{L,P}^{(\g)}$ by
\begin{align*}
	\theta_{L,P}^{(\g)}(Z) \coloneqq \sum_{\underline{\lambda}\in (L')^\g}P(\underline{\lambda})e^{\pi i\tr((\underline{\lambda},\underline{\lambda})Z)}\cdot e^{\underline{\lambda}+L},
\end{align*}
where $(\underline{\lambda},\underline{\lambda}) = ((\lambda_i,\lambda_j))_{i,j=1}^\g\in\mathrm{Mat}_\g(\R)$ for $\underline{\lambda} = (\lambda_1,\hdots,\lambda_\g)$. When $P$ is identically $1$ it is usually dropped from the notation. The Poisson summation formula implies% (cf.\ \cite[Theorem 4.1]{Bo1})
\begin{thm}
	The theta series $\theta_{L,P}^{(\g)}$ is a modular form of weight $m/2+\h$ with respect to $\rho_{L'/L}^{(\g)}$ and $\Spn{2\g}$. If $\h>0$, then $\theta_{L,P}^{(\g)}$ is a cusp form.
\end{thm}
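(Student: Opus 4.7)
The strategy is to verify the transformation law on a generating set of $\Spn{2\g}$, then argue holomorphy, the Fourier expansion, and the cusp form condition separately.

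\medskip

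\emph{Transformation law.} Since $\Spn{2\g}$ is generated by the matrices $n(S)$ with $S\in\mathrm{Sym}_\g(\Z)$ together with $J_\g$, it suffices to check the two cases. For $n(S)$, the action is $Z\mapsto Z+S$, and the exponent changes by $e^{\pi i\tr((\underline{\lambda},\underline{\lambda})S)}$. One then observes that for $\underline{\lambda}\in(L')^\g$, the matrix $\frac12(\underline{\lambda},\underline{\lambda})$ reduces modulo $\Z$ to the matrix with diagonal entries $\q(\lambda_i+L)$ and off-diagonal entries $(\lambda_i+L,\lambda_j+L)/2+(\lambda_j+L,\lambda_i+L)/2$ so that the exponential $e^{\pi i\tr((\underline{\lambda},\underline{\lambda})S)}$ matches the eigenvalue $e(\tfrac12\tr(S(\underline{\lambda},\underline{\lambda})))$ of $\rho_\D^{(\g)}(n(S))$ on $e^{\underline{\lambda}+L}$. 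Grouping the sum by cosets $\underline{\lambda}+L^\g$ gives the required identity.

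\medskip

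\emph{The inversion $J_\g$.} This is the main technical step, handled by Poisson summation. One writes the theta series as a sum over cosets $\underline{\gamma}\in(L'/L)^\g$ of an $m\g$-dimensional lattice sum, and applies the multi-variable Poisson summation formula to the Schwartz function $P(X)e^{\pi i\tr(X^TXZ)}$ in the variable $X\in\R^{m\times\g}$. Here the Fourier transform of $P(X)e^{-\pi\tr(X^TXY)}$ (for $Z=iY$, $Y>0$) is computed using that $P$ is harmonic of degree $\h$; the standard identity gives a factor $(\det Y)^{-(m/2+\h)}\cdot i^{-m\g/2}\cdot P(X)e^{-\pi\tr(X^TXY^{-1})}$, from which the weight $m/2+\h$ and the factor $\det(CZ+D)^{-k}=\det(Z)^{-k}$ with $k=m/2+\h$ drop out after analytic continuation from $Z=iY$ to general $Z\in\mathbb H_\g$. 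The constant $e(\g\sign(\D)/8)/\sqrt{|\D|^\g}$ arises from swapping the order of summation and computing the resulting Gauss sum, whose evaluation is Milgram's formula applied $\g$ times (after diagonalizing the Gram matrix of $L/L'\subset L'/L$ appropriately). This is the step where the assumption that $\sign(\D)$ is even is used so that the $8$th roots of unity square consistently.

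\medskip

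\emph{Fourier expansion, holomorphy and cusp form condition.} Holomorphy on $\mathbb H_\g$ is immediate from absolute convergence for $\Imp(Z)\ge Y_0>0$. Regrouping the series by $S=(\underline{\lambda},\underline{\lambda})$ gives the Fourier expansion of the stated form, with Fourier coefficients $c(\underline{\gamma}+L^\g,S)=\sum_{\underline{\lambda}\in\underline{\gamma}+L^\g,\,(\underline{\lambda},\underline{\lambda})=S}P(\underline{\lambda})$; in particular $S\ge0$ always. Boundedness in the sense of (iii) then follows for $\g=1$, while for $\g>1$ the Koecher principle applies. For the cusp form assertion when $\h>0$, one uses property (i) of harmonic forms: if $\underline{\lambda}\in(L')^\g$ has $\mathrm{rk}(\underline{\lambda})<\g$, pick $A\in\mathrm{GL}_\g(\C)$ such that $\underline{\lambda}A$ has a zero column, and then let $D_t$ be the diagonal matrix scaling that column by $t$. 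Since $\underline{\lambda}AD_t=\underline{\lambda}A$ one obtains $P(\underline{\lambda}A)=t^\h P(\underline{\lambda}A)$ for all $t$, hence $P(\underline{\lambda}A)=0$ and by the scaling law $P(\underline{\lambda})=0$. Thus only $\underline{\lambda}$ of full rank contribute, which forces $S=(\underline{\lambda},\underline{\lambda})$ to be positive definite, so every non-zero Fourier coefficient is supported on $S>0$ and $\theta^{(\g)}_{L,P}$ is a cusp form.

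\medskip

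The hard part is the inversion step: carefully tracking the normalizing constants so that the Gauss sum from Poisson summation exactly matches the prefactor $e(\g\sign(\D)/8)/\sqrt{|\D|^\g}$ in the definition of $\rho_\D^{(\g)}(J_\g)$, and justifying the interchange of Fourier transform and harmonic polynomial in the multi-variable setting.
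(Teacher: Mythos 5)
Your proof is correct and follows essentially the same route as the paper, which simply invokes the Poisson summation formula (with the standard references) for this statement: you check the generators $n(S)$ and $J_\g$, handle the inversion by Poisson summation together with Hecke's identity for harmonic forms, and derive the cusp condition from the vanishing of $P$ on matrices of non-maximal rank, all of which is the intended argument. The only caveat is that your intermediate Fourier-transform constant is stated loosely --- Hecke's identity contributes a root of unity depending on $\h$ (a factor of the shape $(-i)^{\h\g}$) in addition to the $i^{-m\g/2}$ you record --- but you correctly flag the bookkeeping of these eighth roots of unity as the delicate point, and the mechanism you describe (matching them against $e(\g\sign(\D)/8)/\sqrt{|\D|^\g}$ via $\sign(\D)=m\bmod 8$) is the right one.
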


The genus $G$ of an even lattice $L$ is the set of isometry classes of lattices equivalent to $L$ over $\Z_p$ for all primes $p$ and over $\R$. It is uniquely determined by the signature $(t_+,t_-)$ of $L$ and its discriminant form $L'/L$ (cf.\ \cite{N}). Therefore, the positive-definite even lattices $L$ of rank $m$ with $L'/L\cong \mathcal{D}$ form a genus denoted by $I\!I_{m,0}(\mathcal{D})$.

We define the \textit{genus theta series} of the genus $G = I\!I_{m,0}(\mathcal{D})$ as
\begin{align*}
	\theta_G^{(\g)} \coloneqq \mu(G)^{-1}|\Ort(\mathcal{D})|^{-1}\sum_{L\in G}\frac{1}{\#\Aut(L)}\sum_{\sigma\in\mathrm{Iso}(\mathcal{D},L'/L)}\sigma^{*(\g)}\theta_{L}^{(\g)},
\end{align*}
where
\begin{align*}
	\mu(G) = \sum_{L\in G}\frac{1}{\#\Aut(L)}
\end{align*}
is the mass of the genus $G$ and the first sum ranges over the positive-definite even lattices in $G$.

We describe the theta series for $\g=1$ in more detail. Let $\operatorname{H}_m^\h$ denote the space of harmonic polynomials in $m$ variables homogeneous of degree $\h$, i.e.\ harmonic forms of degree $\h$ in $(x_1,\hdots,x_m)^T$. For a discriminant form $\mathcal{D}$ we define
\begin{align*}
    \Theta_{m,k}(\mathcal{D}) \coloneqq &\spann\{\sigma^{*(1)}\theta_{L,P}^{(1)}\mid L\in I\!I_{m,0}(\D), \ P \in\operatorname{H}_m^{k-m/2}, \ \sigma\in\Iso(\mathcal{D},L'/L)\} \\
    = &\spann\{\sigma_*^{(1)}\theta_{L,P}^{(1)}\mid L\in I\!I_{m,0}(\D), \ P \in\operatorname{H}_m^{k-m/2}, \ \sigma\in\Iso(L'/L,\mathcal{D})\}
\end{align*}
and
\begin{align*}
	\Theta_{m,k}(\mathcal{D})_0 \coloneqq \Theta_{m,k}(\mathcal{D})\cap\mathrm{S}_k^{(1)}(\mathcal{D}).
\end{align*}

For lattices $L\subset M\subset M'\subset L'$ the group $M/L$ is an isotropic subgroup of $L'/L$ and 
\[ \downarrow_{M/L}(\theta_{L,P}^{(\g)}) = \theta_{M,P}^{(\g)}, \]
where we identified $(M'/L)/(M/L)$ with $M'/M$. We find that $\downarrow_H(\Theta_{m,k}(\mathcal{D})) \subset \Theta_{m,k}(H^\bot/H)$.\label{pg:SublatticeTheta}

We can generate the space of harmonic polynomials using the so called \textit{Gegenbauer polynomials} $G_m^\h(s,n)$, which are defined by
\begin{align*}
    \frac{1}{(1-2sX+nX^2)^{m/2-1}} = \sum_{\h=0}^\infty G_m^\h(s,n)\cdot X^\h.
\end{align*}
(cf.\ \cite{EZ} and \cite{I}). Then we obtain
\begin{prp}
	The polynomial
	\begin{align*}
		P_m^{\h}(x,y) \coloneqq G_m^\h(x^Ty,\|x\|^2\|y\|^2)
	\end{align*}
	on $\R^m\times\R^m$ is harmonic of degree $\h$ in both $x$ and $y$ when the other variable is fixed and $P_m^\h(Sx,Sy) = P_m^\h(x,y)$ for all $S\in \Ort(m)$.
\end{prp}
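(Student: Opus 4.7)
The plan is to verify the three assertions in turn, with the harmonicity being the only one requiring any real work. The orthogonal invariance is immediate: for $S\in\Ort(m)$ one has $(Sx)^T(Sy) = x^TS^TSy = x^Ty$ and $\|Sx\| = \|x\|$, so both arguments of $G_m^\h$ are unchanged and $P_m^\h(Sx,Sy) = P_m^\h(x,y)$.

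For the homogeneity of degree $\h$ in $x$ (with $y$ fixed) I would argue directly from the generating function. Replacing $x$ by $\lambda x$ sends $s = x^Ty$ to $\lambda s$ and $n = \|x\|^2\|y\|^2$ to $\lambda^2 n$, so
\[ \sum_{\h\geq 0} G_m^\h(\lambda s,\lambda^2 n)X^\h = \bigl(1 - 2s(\lambda X) + n(\lambda X)^2\bigr)^{1-m/2} = \sum_{\h\geq 0}G_m^\h(s,n)\lambda^\h X^\h, \]
and comparing coefficients yields $G_m^\h(\lambda s,\lambda^2 n) = \lambda^\h G_m^\h(s,n)$. The same computation with the roles of $x$ and $y$ exchanged gives homogeneity of degree $\h$ in $y$.

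The remaining task is the harmonicity, and here my plan is to apply $\Delta_x$ directly to the generating function rather than treating each $P_m^\h$ separately. Set $u(x) = 1 - 2X\,x^Ty + X^2\|x\|^2\|y\|^2$ (regarding $X$ and $y$ as parameters) and $F = u^{1-m/2}$. A short calculation gives $\nabla_x u = 2X\bigl(X\|y\|^2 x - y\bigr)$ and $\Delta_x u = 2mX^2\|y\|^2$, together with the key algebraic identity
\[ |\nabla_x u|^2 = 4X^2\|y\|^2\,u, \]
which follows by expanding the square and factoring out $\|y\|^2$. Plugging these into
\[ \Delta_x F = (1-m/2)\,u^{-m/2}\Delta_x u - (m/2)(1-m/2)\,u^{-m/2-1}|\nabla_x u|^2 \]
makes the two contributions cancel, so $\Delta_x F = 0$ as a formal power series in $X$. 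Comparing the coefficient of $X^\h$ in $F = \sum_\h P_m^\h(x,y)X^\h$ then yields $\Delta_x P_m^\h(x,y) = 0$, and harmonicity in $y$ is obtained by the evident symmetry of $F$ in $x$ and $y$.

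I do not anticipate a serious obstacle. The only non-automatic step is recognising the identity $|\nabla_x u|^2 = 4X^2\|y\|^2 u$, which is a one-line expansion; once it is in hand the cancellation in $\Delta_x F$ is forced by the exponent $1-m/2$ and the coefficient $2m$ appearing in $\Delta_x u$, exactly matching the $m/2$ in the chain rule.
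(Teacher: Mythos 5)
Your proof is correct and follows essentially the same route as the paper: the paper likewise applies $\Delta_x$ to the generating function $u^{1-m/2}$ (computing the second partials $\partial_{x_i}^2$ individually rather than packaging them as $\Delta_x u$ and $|\nabla_x u|^2$), and its key cancellation rests on exactly your identity $\sum_i(\partial_{x_i}u)^2 = 4X^2\|y\|^2\,u$, with the homogeneity and $\Ort(m)$-invariance handled just as you do.
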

\begin{proof}
	We denote 
	\[ f(x) = \frac{1}{(1-2x^TyX+\|x\|^2\|y\|^2X^2)^{m/2-1}} \] and compute
	\begin{align*}
		\frac{\partial^2}{(\partial x_i)^2}f(x) &= \frac{\partial}{\partial x_i}\frac{(-m/2+1)(-2y_iX + 2x_i\|y\|^2X^2)}{(1-2x^TyX+\|x\|^2\|y\|^2X^2)^{m/2}} \\
		&= \frac{(m/2-1)m/2(-2y_iX + 2x_i\|y\|^2X^2)^2}{(1-2x^TyX+\|x\|^2\|y\|^2X^2)^{m/2+1}} - \frac{(m/2-1)2\|y\|^2X^2}{(1-2x^TyX+\|x\|^2\|y\|^2X^2)^{m/2}}.
	\end{align*}
	Now
	\begin{align*}
		\sum_{i=1}^m(-2y_iX + 2x_i\|y\|^2X^2)^2 &= \sum_{i=1}^m(4y_i^2X^2 -8x_iy_i\|y\|^2X^3 + 4x_i^2\|y\|^4X^4) \\
		&= 4\|y\|^2X^2(1-2x^TyX + \|x\|^2\|y\|^2X^2).
	\end{align*}
	Hence
	\begin{align*}
		\Delta f(x) &= \frac{(m-2)m\|y\|^2X^2}{(1-2x^TyX+\|x\|^2\|y\|^2X^2)^{m/2}} - \frac{(m-2)m\|y\|^2X^2)}{(1-2x^TyX+\|x\|^2\|y\|^2X^2)^{m/2}} = 0.
	\end{align*}
	Furthermore, for a $\lambda\in\R$ we have
	\[ P_m^\h(\lambda x,y) = G_m^\h((\lambda x)^Ty,\|\lambda x\|^2\|y\|) = G_m^\h(\lambda x^Ty,\lambda^2\|x\|^2\|y\|) \]
	and
	\begin{align*}
		\sum_{\h=0}^\infty G_m^\h(\lambda s,\lambda^2n)\cdot X^\h &= \frac{1}{(1-2\lambda sX+\lambda^2nX^2)^{m/2-1}} \\
		&= \frac{1}{(1-2s(\lambda X)+n(\lambda X)^2)^{m/2-1}} \\
		&= \sum_{\h=0}^\infty G_m^\h(s,n)\cdot (\lambda X)^\h \\
		&= \sum_{\h=0}^\infty \lambda^\h G_m^\h(s,n)\cdot X^\h
	\end{align*}
	so that $P_m^\h(\lambda x,y) = \lambda^\h P_m^\h(x,y)$. The fact that $P_m^\h(Sx,Sy) = P_m^\h(x,y)$ for all $S\in \Ort(m)$ follows from definition.
\end{proof}
Now let $Z = \left(\begin{smallmatrix} z_1 & z_2 \\ z_2 & z_4 \end{smallmatrix}\right)\in\mathbb{H}_2$  and $\underline{\lambda}=(\lambda,\mu)\in (L')^2$. We compute
\[ e^{\pi i \tr((\underline{\lambda},\underline{\lambda})Z)} = e^{\pi i ((\lambda,\lambda)z_1 + 2(\lambda,\mu)z_2 + (\mu,\mu)z_4)} \]
and therefore
\begin{align*}
	\frac{1}{2}\frac{\partial}{\partial z_2}e^{\pi i \tr((\underline{\lambda},\underline{\lambda})Z)}\big|_{z_2=0} &= \pi i (\lambda,\mu)e^{\pi i ((\lambda,\lambda)z_1 + (\mu,\mu)z_4)} \\
	\frac{\partial^2}{\partial z_1\partial z_4}e^{\pi i \tr((\underline{\lambda},\underline{\lambda})Z)}\big|_{z_2=0} &= (\pi i)^2 (\lambda,\lambda)(\mu,\mu)e^{\pi i ((\lambda,\lambda)z_1 + (\mu,\mu)z_4)}.
\end{align*}
It follows that
\begin{multline*}
	G_m^\h\left(\frac{1}{2}\frac{\partial}{\partial z_2},\frac{\partial^2}{\partial z_1\partial z_4}\right)\theta_L^{(2)}(Z)\big|_{z_2=0} \\
	= (\pi i)^{\h}\sum_{\gamma,\beta\in L'/L}\left[\sumstack{\lambda\in \gamma+L \\ \mu\in \beta+L}P_m^{\h}(\lambda,\mu)e^{\pi i(\lambda,\lambda)z_1}e^{\pi i(\mu,\mu)z_2}\right]e^{\gamma}\otimes e^\beta\in\Theta_{m,m/2+\h}(\mathcal{D})\otimes\Theta_{m,m/2+\h}(\mathcal{D}).
\end{multline*}
In light of this we define on the space of $C^\infty$ functions on $\mathbb{H}_2$ the operator
\begin{align*}
	&\partial_\h:C^\infty(\mathbb{H}_2)\rightarrow C^\infty(\mathbb{H}\times\mathbb{H}) \\
	&\partial_\h f = G_m^\h\left(\frac{1}{2}\frac{\partial}{\partial z_2},\frac{\partial^2}{\partial z_1\partial z_4}\right)f(Z)|_{z_2=0}.
\end{align*}
Also let
\begin{align*}
    \vartheta_{G,m/2+\h} \coloneqq \partial_\h\theta_G^{(2)}\in\Theta_{m,m/2+\h}(\mathcal{D})\otimes\Theta_{m,m/2+\h}(\mathcal{D}).
\end{align*}
Note that $\partial_{\h}$ is essentially the operator $\mathcal{D}_\h$ defined by Eichler and Zagier in \cite{EZ}. It is a special case of the operators studied in \cite{I}.

We define a scalar product $\operatorname{h}(\cdot,\cdot)$ on $\operatorname{H}_m^\h$ by
\[ \operatorname{h}(p,q) = \int_{B_1}\nabla p(x)^T \, \overline{\nabla q(x)}\mathrm{d}x. \]
An element $S\in\operatorname{SO}(m)$ naturally acts on $\operatorname{H}_m^\h$ by $S.p = p(S^T\cdot)$ and it is well-known that this representation is irreducible (see for example \cite[(0.9) and (5.7)]{KV}). Note that we have $\nabla S.p = S\cdot(\nabla p)(S^T\cdot)$ so that for any $p,q\in\operatorname{H}_m^\h$ we obtain
\begin{align*}
	\operatorname{h}(S.p,S.q) &= \int_{B_1}\nabla p(S^Tx)^T \, \overline{\nabla q(S^Tx)}\mathrm{d}x = \int_{B_1}[(\nabla p)(S^Tx)]^TS^TS \, \overline{(\nabla q)(S^Tx)}\mathrm{d}x \\
	&= \int_{S^TB_1}\nabla p(x)^T \, \overline{\nabla q(x)}\mathrm{d}x = \operatorname{h}(p,q),
\end{align*}
where we substituted $Sx$ for $x$ in the last step and used the fact that $S^TS = I$. This implies

\begin{prp}\label{prp:partialthetaislin}
	Let $m$ be even and $\h>0$ and let $(P_1,\hdots,P_r)$ be any orthonormal basis of $\operatorname{H}_m^\h$ with respect to $\operatorname{h}(\cdot,\cdot)$. Then $P_m^\h(x,y)$ is up to a non-zero constant equal to
	\[ \sum_{i=1}^rP_i(x)\overline{P_i}(y). \]
\end{prp}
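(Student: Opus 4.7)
The plan is to exhibit both $P_m^\h(x,y)$ and $K(x,y):=\sum_{i=1}^r P_i(x)\overline{P_i(y)}$ as nonzero vectors in the one-dimensional space of $\Ort(m)$-invariant elements of $\operatorname{H}_m^\h \otimes_{\C} \operatorname{H}_m^\h$; they must then be scalar multiples of one another, with a nonzero constant of proportionality.

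First I would check that $K$ is independent of the chosen orthonormal basis: if $(Q_j)$ is another orthonormal basis with $Q_j = \sum_i U_{ji} P_i$, then the unitarity of $U$ gives $\sum_j Q_j(x)\overline{Q_j(y)} = \sum_i P_i(x)\overline{P_i(y)}$. Combined with the $\Ort(m)$-invariance of $\operatorname{h}(\cdot,\cdot)$ established immediately above the proposition (so that $(S.P_i)_i$ is again an orthonormal basis for every $S\in\Ort(m)$), this basis-independence forces $K(S^T x, S^T y) = K(x,y)$, i.e., $K$ is $\Ort(m)$-invariant under the diagonal action. The corresponding invariance of $P_m^\h$ is already recorded in the previous proposition. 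Since $\overline{P_i}$ is harmonic of degree $\h$ whenever $P_i$ is (conjugation of coefficients commutes with the Laplacian and preserves degree), both $K$ and $P_m^\h$ lie in $\operatorname{H}_m^\h \otimes_{\C} \operatorname{H}_m^\h$.

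The main input---and the only nontrivial step---is that $\operatorname{H}_m^\h$ is an irreducible $\Ort(m)$-representation, which is already cited to \cite{KV} in the paper. The form $\operatorname{h}(\cdot,\cdot)$ restricts to a real $\Ort(m)$-invariant inner product on the real harmonic polynomials and extends $\C$-bilinearly to a nondegenerate $\Ort(m)$-invariant symmetric form on $\operatorname{H}_m^\h$. This makes $\operatorname{H}_m^\h$ self-dual as an $\Ort(m)$-module, so Schur's lemma yields $\dim_{\C}(\operatorname{H}_m^\h \otimes_{\C} \operatorname{H}_m^\h)^{\Ort(m)} = \dim_{\C}\End_{\Ort(m)}(\operatorname{H}_m^\h) = 1$.

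It remains to check non-vanishing. For $K$, the diagonal value $K(x,x) = \sum_i |P_i(x)|^2$ is a sum of nonnegative real polynomials which cannot vanish identically since the $P_i$ span a nonzero space. For $P_m^\h$, evaluation at $x=y=e_1$ gives $G_m^\h(1,1)$, equal to the coefficient of $X^\h$ in $(1-X)^{-(m-2)}$, which is positive. Hence $P_m^\h = c\,K$ for some $c\neq 0$, as claimed. I do not anticipate any serious obstacle beyond invoking the irreducibility of $\operatorname{H}_m^\h$.
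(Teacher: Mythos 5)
Your proof is correct and rests on the same key input as the paper's: the irreducibility of $\operatorname{H}_m^\h$ together with Schur's lemma. The paper phrases the argument via the intertwining operator $p\mapsto\operatorname{h}(p,P_m^\h(\cdot,\overline{x}))$, which must be a nonzero multiple of the identity, while you work with the (equivalent) one-dimensional space of invariants in $\operatorname{H}_m^\h\otimes\operatorname{H}_m^\h$; the two formulations are standardly interchangeable, and your explicit evaluation $G_m^\h(1,1)>0$ even makes the non-vanishing of $P_m^\h$, which the paper asserts without computation, more transparent.
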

\begin{proof}
	We define a map $\phi:\operatorname{H}_m^\h\to\operatorname{H}_m^\h$ by
	\[ \phi(p)(x) = \operatorname{h}(p,P_m^\h(\cdot,\overline{x})). \]
	We find that
	\begin{align*}
		(S.\phi(p))(x) &= \operatorname{h}(p,P_m^\h(\cdot,\overline{S^Tx})) = \operatorname{h}(p,P_m^\h(S\cdot,\overline{x})) = \operatorname{h}(S.p,P_m^\h(\cdot,\overline{x})) = \phi(S.p)(x)
	\end{align*}
	and so by Schur's lemma, $\phi$ must be a multiple of the identity. Since $P_m^\h$ is non-zero, this scalar is non-zero. Clearly the identity map is given by
	\[ p \mapsto \sum_{i=1}^r\operatorname{h}(p,P_i)P_i. \]
\end{proof}

\section{Vector valued Hecke operators}\label{sec:VectorValuedHeckeOperators}

We now want to define vector valued Hecke operators. They were introduced by Bruinier and Stein in \cite{BS} and naturally appear when we use the doubling method. We will study some of their properties and describe their kernel functions. In this section we will only consider the case $\g=1$. So from now on, unless stated otherwise, whenever $\g$ is omitted, it is assumed to be equal to $1$.

Let $\mathcal{D}$ be a discriminant form of even signature and level $N$. We define Hecke operators $T(\n^2)$ acting on $\mathrm{M}_k(\mathcal{D})$ (see \cite{BS}). In order to do so we extend the right action given by the inverse of the Weil representation to certain matrices in $\mathrm{Mat}_2(\Z)$. Let $\n$ be a non-negative integer and $\alpha = \left(\begin{smallmatrix} \n^2 & 0 \\ 0 & 1\end{smallmatrix}\right)\in\mathrm{Mat}_2(\Z)$. We define
\begin{align*}
    \rho_\mathcal{D}(\alpha)^{-1}e^\gamma = e^{\n\gamma}.
\end{align*}
For any $\delta=A\alpha B\in\Gamma\alpha\Gamma$ we put
\begin{align*}
    \rho_\mathcal{D}(\delta)^{-1}e^\gamma = \rho_\mathcal{D}(B^{-1})\rho_\mathcal{D}(\alpha)^{-1}\rho_\mathcal{D}(A^{-1})e^\gamma.
\end{align*}
It is shown in \cite{BS} that $\rho_\mathcal{D}(\delta)^{-1}$ is well-defined and that
\begin{align*}
    \rho_\mathcal{D}(A\delta B)^{-1}e^\gamma = \rho_\mathcal{D}(B)^{-1}\rho_\mathcal{D}(\delta)^{-1}\rho_\mathcal{D}(A)^{-1}e^\gamma = \rho_\mathcal{D}(B^{-1})\rho_\mathcal{D}(\delta)^{-1}\rho_\mathcal{D}(A^{-1})e^\gamma
\end{align*}
for all $\delta\in\Gamma\alpha\Gamma$ and $A,B\in\Gamma$. Furthermore, $\beta = \left(\begin{smallmatrix} 1 & 0 \\ 0 & \n^2\end{smallmatrix}\right)\in\Gamma\alpha\Gamma$, in fact $\beta = -J\alpha J$ and
\begin{align*}
    \rho_\mathcal{D}(\beta)^{-1}e^\gamma = \sumstack{\mu\in \mathcal{D} \\ \n\mu = \gamma}e^\mu.
\end{align*}
The element $\beta$ satisfies
\begin{align*}
    \langle \rho_\mathcal{D}(\alpha)^{-1}v,w\rangle &= \langle v,\rho_\mathcal{D}(\beta)^{-1}w\rangle \quad\text{and} \\
    \langle \rho_\mathcal{D}(\beta)^{-1}v,w\rangle &= \langle v,\rho_\mathcal{D}(\alpha)^{-1}w\rangle.
\end{align*}
When $(\n,N) = 1$ we find that for $\delta\in\Gamma\alpha\Gamma$
\begin{align}\label{eq:actionOfAlphaWhenCoprime}
    \rho_\mathcal{D}(\delta)^{-1} = \chi_\mathcal{D}(\n)\rho_\mathcal{D}(\Tilde{\delta})^{-1} = \chi_\mathcal{D}(\n)\rho_\mathcal{D}(\Tilde{\delta}^{-1}),
\end{align}
where $\Tilde{\delta}\in\Gamma$ is any representative of $\n^{-1}\delta\in\SLn{N}$ and $\chi_\mathcal{D}$ is the quadratic character
\[ \chi_\mathcal{D}(\n) = \leg{\n}{|\mathcal{D}|}e((\n-1)\operatorname{oddity}(\mathcal{D})/4). \]
Here $\operatorname{oddity}(\mathcal{D})\bmod8$ depends only on the $2$-adic subgroup of $\mathcal{D}$ and is even if the signature of $\mathcal{D}$ is even. For a precise definition see \cite[Chapter 15]{CS}. The following lemma is well-known (see e.g.\ \cite[Hilfssatz IV.1.12]{F2})
\begin{lem}\label{lem:DoubleCosetSet}
    For $\n,m\in\Z$ with $(\n,m) \not=(0,0)$ we have the equality
    \begin{align*}
        \Gamma\begin{pmatrix} \n & 0 \\ 0 & m \end{pmatrix}\Gamma &= \left\{\begin{pmatrix} a & b \\ c & d \end{pmatrix}\in\mathrm{Mat}_2(\Z) \mid ad-bc=\n m, \gcd(a,b,c,d) = \gcd(\n,m)\right\}.
    \end{align*}
\end{lem}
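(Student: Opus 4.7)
The statement is a standard fact about double cosets in $\mathrm{SL}_2(\mathbb{Z})$; my plan is to prove the two inclusions separately, with the elementary divisor theorem doing the substantive work in one direction.

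\textbf{Easy inclusion ($\subseteq$).} Suppose $\delta = A \left(\begin{smallmatrix} n & 0 \\ 0 & m \end{smallmatrix}\right) B$ with $A,B\in\Gamma$. Taking determinants gives $\det\delta = nm$ since $\det A = \det B = 1$. For the gcd condition, the gcd of the entries of an integer matrix $M$ equals the (positive) generator of the ideal generated by its entries, and this ideal is invariant under left and right multiplication by matrices in $\mathrm{GL}_2(\mathbb{Z})$: indeed such multiplications realize integer linear combinations of rows (resp.\ columns), which cannot enlarge the ideal, and are invertible, so they cannot shrink it either. Applied to $\left(\begin{smallmatrix} n & 0 \\ 0 & m \end{smallmatrix}\right)$, this gives $\gcd(a,b,c,d) = \gcd(n,m)$.

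\textbf{Harder inclusion ($\supseteq$).} Let $M = \left(\begin{smallmatrix} a & b \\ c & d \end{smallmatrix}\right)$ with $\det M = nm$ and $\gcd(a,b,c,d) = \gcd(n,m) =: g$. Write $n = gn'$, $m = gm'$, so $\gcd(n',m')=1$. The key observation is that the elementary divisors of both $M$ and $\mathrm{diag}(n,m)$ are $(g,\,nm/g)$: the first elementary divisor is the gcd of the entries, which is $g$ in both cases; the product of the elementary divisors is the absolute value of the determinant, which is $|nm|$ in both cases. By the elementary divisor theorem (Smith normal form for $2\times 2$ integer matrices), there thus exist $A_1, B_1, A_2, B_2 \in \mathrm{GL}_2(\mathbb{Z})$ with
\[ A_1 M B_1 \;=\; \mathrm{diag}(g,\, nm/g) \;=\; A_2 \, \mathrm{diag}(n,m)\, B_2. \]
Hence $M = A \, \mathrm{diag}(n,m)\, B$ with $A = A_1^{-1}A_2 \in \mathrm{GL}_2(\mathbb{Z})$ and $B = B_2 B_1^{-1} \in \mathrm{GL}_2(\mathbb{Z})$. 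It remains to upgrade $A, B$ to lie in $\mathrm{SL}_2(\mathbb{Z})$. Taking determinants of the equation $M = A\,\mathrm{diag}(n,m)\,B$ gives $\det(A)\det(B) = 1$, so $\det A = \det B \in \{\pm 1\}$. If both equal $-1$, replace $A$ by $A' = A \cdot \mathrm{diag}(-1,-1)$ and $B$ by $B' = \mathrm{diag}(-1,-1) \cdot B$; since $\mathrm{diag}(-1,-1)$ commutes with $\mathrm{diag}(n,m)$ and lies in $\mathrm{SL}_2(\mathbb{Z})$, the equation still holds and now $\det A' = \det B' = 1$.

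\textbf{Main obstacle.} The only real content is the elementary divisor theorem; modulo that, everything is bookkeeping. The one subtle point is the sign fix at the end, which works precisely because $\mathrm{diag}(-1,-1)$ is central among diagonal matrices and lies in $\mathrm{SL}_2(\mathbb{Z})$; this lets us simultaneously flip the determinants of $A$ and $B$ without disturbing the middle factor.
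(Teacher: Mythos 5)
Your strategy---both inclusions, with the Smith normal form carrying the substantive direction---is the standard one; the paper itself gives no proof and simply refers to \cite[Hilfssatz IV.1.12]{F2}, whose argument is essentially the same. The easy inclusion is fine, and so is the identification of the elementary divisors of $M$ and of $\mathrm{diag}(n,m)$ via ``first divisor $=$ gcd of entries, product of divisors $=|\det|$''.

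There is, however, one step that fails in a case the lemma explicitly allows and the paper actually uses. When $nm=0$ (say $n=0$, $m\neq 0$; this is exactly the situation for the set $M_0$ in Lemma \ref{lem:PsiFunction}), taking determinants in $M=A\,\mathrm{diag}(n,m)\,B$ yields $0=0$ and gives no information: your conclusion $\det A\,\det B=1$ implicitly divides by $\det\mathrm{diag}(n,m)=nm$ and is only valid for $nm\neq 0$. In the degenerate case you may perfectly well end up with $\det A=1$ and $\det B=-1$, and then your fix of multiplying both factors by $-I$ cannot help, since it flips both determinants simultaneously. The repair is easy but different: for $n=0$ the matrix $\mathrm{diag}(-1,1)$ lies in $\GL_2(\Z)$ and stabilizes $\mathrm{diag}(0,m)$ under multiplication from either side (e.g.\ $\mathrm{diag}(-1,1)\,\mathrm{diag}(0,m)=\mathrm{diag}(0,m)$), so the determinants of $A$ and $B$ can be corrected \emph{independently} by inserting $\mathrm{diag}(-1,1)$ next to the middle factor as needed (use $\mathrm{diag}(1,-1)$ if instead $m=0$). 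With that case added, and with the harmless caveat that for $nm<0$ the common Smith normal form should be written as $\mathrm{diag}(g,|nm|/g)$, your proof is complete; for $nm\neq 0$ it is correct as written.
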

We will sometimes, when convenient, simply write $(\cdot,\cdot)$ for $\gcd(\cdot,\cdot)$. Denote by
\begin{align*}
    M_\n :\!&= \Gamma\alpha\Gamma =\left\{\begin{pmatrix} a & b \\ c & d \end{pmatrix}\in\mathrm{Mat}_2(\Z) \mid ad-bc=\n^2, \gcd(a,b,c,d) = 1\right\}.
\end{align*}

Now let
\begin{align*}
    M_\n = \Gamma\alpha\Gamma = \bigcup_i\Gamma\cdot\delta_i
\end{align*}
be a disjoint right coset decomposition. We define the Hecke operator $T(\n^2)$ on modular forms $f\in\mathrm{M}_k(\mathcal{D})$ by
\begin{align*}
    T(\n^2)f \coloneqq \n^{k-2}\sum_i\rho_\mathcal{D}(\delta_i)^{-1}f|_k[\delta_i].
\end{align*}
Then
\begin{thm}[Theorem 5.6, \cite{BS}]\label{thm:BruinierStein}
For any positive integer $\n$, the Hecke operator $T(\n^2)$ is a linear operator on $\mathrm{M}_k(\mathcal{D})$ taking cusp forms to cusp forms. It is self-adjoint with respect to the Petersson scalar product. Moreover, if $\n$, $m$ are coprime, then
\begin{align*}
    T(\n^2)T(m^2) = T(\n^2m^2).
\end{align*}
\end{thm}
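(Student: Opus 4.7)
The plan is to establish the three assertions in order: modularity and cusp form preservation, self-adjointness, and multiplicativity. Throughout I will rely on the extended action identity $\rho_\D(A\delta B)^{-1}=\rho_\D(B)^{-1}\rho_\D(\delta)^{-1}\rho_\D(A)^{-1}$ and on the double coset description from Lemma \ref{lem:DoubleCosetSet}.

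First, for modularity, fix a right coset decomposition $M_\n=\bigsqcup_i\Gamma\delta_i$ and an element $\gamma\in\Gamma$. Right multiplication by $\gamma$ permutes the cosets, so there is a permutation $\sigma$ and $A_i\in\Gamma$ with $\delta_i\gamma=A_i\delta_{\sigma(i)}$. From this I obtain $\delta_i=A_i\delta_{\sigma(i)}\gamma^{-1}$ and the extended action formula gives $\rho_\D(\delta_i)^{-1}=\rho_\D(\gamma)\rho_\D(\delta_{\sigma(i)})^{-1}\rho_\D(A_i)^{-1}$. Combining with $f|_k[\delta_i\gamma]=\rho_\D(A_i)f|_k[\delta_{\sigma(i)}]$ yields $(T(\n^2)f)|_k[\gamma]=\rho_\D(\gamma)T(\n^2)f$. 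Holomorphy, the Koecher-type boundedness, and the vanishing of the constant term in the Fourier expansion of a cusp form transfer directly from $f$ to each summand $\rho_\D(\delta_i)^{-1}f|_k[\delta_i]$, since each $\delta_i\in M_\n$ acts by a matrix with positive determinant on $\mathbb{H}$ and the $\rho_\D(\delta_i)^{-1}$ are $\C$-linear isomorphisms of $\C[\D]$.

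Second, for self-adjointness, I would invoke the involution $\delta\mapsto\delta^{*}\coloneqq\n^2\delta^{-1}=-J\delta^{T}J$, which preserves $M_\n$ because it sends the diagonal representative $\alpha$ to $\beta=-J\alpha J\in\Gamma\alpha\Gamma$. The given identities $\langle\rho_\D(\alpha)^{-1}v,w\rangle=\langle v,\rho_\D(\beta)^{-1}w\rangle$ extend, via the factorization $\delta=A\alpha B$ together with the unitarity of $\rho_\D$ on $\Gamma$, to $\langle\rho_\D(\delta)^{-1}v,w\rangle=\langle v,\rho_\D(\delta^{*})^{-1}w\rangle$ for all $\delta\in M_\n$. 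Applying the usual unfolding trick to $\langle T(\n^2)f,g\rangle$ term by term, substituting $Z\mapsto\delta_i Z$ to move the Petersson-slash off $f$, and then reinterpreting the resulting sum using a right coset decomposition for $\{\delta_i^{*}\}$ (which again exhausts $M_\n$) identifies the adjoint operator with $T(\n^2)$ itself.

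Third, multiplicativity when $(\n,m)=1$ follows from the identity of double cosets $\Gamma\,\mathrm{diag}(\n^2,1)\,\Gamma\cdot\Gamma\,\mathrm{diag}(m^2,1)\,\Gamma=\Gamma\,\mathrm{diag}(\n^2m^2,1)\,\Gamma$, which is a direct consequence of Lemma \ref{lem:DoubleCosetSet}: the product set consists of integer matrices of determinant $\n^2m^2$, and the coprimality $(\n,m)=1$ forces the content of any such product to be $1$. Choosing right coset representatives $\{\delta_i^{(\n)}\}$ and $\{\delta_j^{(m)}\}$ for $M_\n$ and $M_m$, the products $\{\delta_i^{(\n)}\delta_j^{(m)}\}$ form a system of right coset representatives for $M_{\n m}$; combined with the associativity of the extended action, this delivers $T(\n^2)T(m^2)=T(\n^2m^2)$.

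The main obstacle is the self-adjointness in part two: one must verify that the involution $\delta\mapsto\delta^{*}$ interacts correctly with both the Petersson integration (where it intertwines left and right coset decompositions) and the extended Weil representation (where the identity $\langle\rho_\D(\delta)^{-1}v,w\rangle=\langle v,\rho_\D(\delta^{*})^{-1}w\rangle$ must be shown to be independent of the factorization $\delta=A\alpha B$), and this independence is the subtlest point, resting on the well-definedness proved in \cite{BS}.
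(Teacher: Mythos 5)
The paper does not prove this statement: it is imported verbatim as Theorem 5.6 of \cite{BS} (Bruinier--Stein), so there is no in-paper argument to compare against. Your outline is the standard classical proof transported to the vector valued setting and it is essentially the route taken in \cite{BS}: coset permutation for modularity, the involution $\delta\mapsto\n^2\delta^{-1}=-J\delta^TJ$ together with unfolding for self-adjointness, and the factorization of right cosets of $M_{\n m}$ as products $\delta_i^{(\n)}\delta_j^{(m)}$ for multiplicativity. The only point you should be explicit about is in part three: you need the compatibility $\rho_\mathcal{D}(\delta\delta')^{-1}=\rho_\mathcal{D}(\delta')^{-1}\rho_\mathcal{D}(\delta)^{-1}$ for $\delta\in M_\n$, $\delta'\in M_m$ lying in \emph{different} double cosets, which does not follow formally from the identity $\rho_\mathcal{D}(A\delta B)^{-1}=\rho_\mathcal{D}(B)^{-1}\rho_\mathcal{D}(\delta)^{-1}\rho_\mathcal{D}(A)^{-1}$ (that identity only moves elements of $\Gamma$ past $\delta$); like the well-definedness you already flag in part two, this is part of the content of the construction in \cite{BS} and is legitimately deferred there.
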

The operators $T(l^2)$ with $l$ coprime to $N$ behave analogously to the classical Hecke operators for $\SL$.
\begin{prp}\label{prp:EulerProduct}
	The algebra generated by all Hecke operators $T(\n^2)$ with $(\n,N) = 1$ is a commutative algebra of self-adjoint operators. Hence, there exists a basis of $\mathrm{S}_k(\mathcal{D})$ consisting of simultaneous eigenforms for it. Let $f$ be a simultaneous eigenform with eigenvalues $\lambda(\n^2)$. The $L$-series
	\begin{align*}
		L(f,s) \coloneqq \sumstack{\n=1 \\ (\n,N) = 1}^\infty\frac{\lambda(\n^2)}{\n^s}
	\end{align*}
	converges for $\Rep(s) > k$ and has an Euler-product
	\begin{align*}
		L(f,s) = \prod_{p\nmid N}\frac{(1-\chi_\mathcal{D}(p)p^{k-2-s})(1 +\chi_\mathcal{D}(p)p^{k-1-s})}{1-(\lambda(p^2)+\chi_\mathcal{D}(p)(1-p)p^{k-2})p^{-s}+p^{2k-2-2s}}.
	\end{align*}
\end{prp}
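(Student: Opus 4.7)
Theorem \ref{thm:BruinierStein} already provides self-adjointness of each $T(\n^2)$ as well as the identity $T(\n^2)T(m^2)=T((\n m)^2)$ when $(\n,m)=1$. So the algebra generated by $\{T(\n^2):(\n,N)=1\}$ is automatically self-adjoint, and commutativity reduces, via coprime multiplicativity, to commutativity of each local subalgebra $\mathcal{T}_p$ generated by $\{T(p^{2a})\}_{a\ge 0}$ for $p\nmid N$.

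To analyse $\mathcal{T}_p$, I would decompose $M_{p^a}=\Gamma\alpha_{p^a}\Gamma$ into right cosets via Lemma \ref{lem:DoubleCosetSet} and compute $T(p^2)\cdot T(p^{2a})$ directly. The crucial simplification comes from equation (\ref{eq:actionOfAlphaWhenCoprime}): since $(p,N)=1$, the Weil action $\rho_\mathcal{D}(\delta)^{-1}$ on any $\delta\in M_{p^a}$ is given by $\chi_\mathcal{D}(p^a)$ times a Weil action through $\Gamma$. This reduces the vector-valued double-coset multiplication to the classical $\SL$-double-coset product twisted by $\chi_\mathcal{D}$. The outcome is a three-term relation expressing $T(p^{2(a+1)})$ as a $\C$-linear combination of $T(p^2)T(p^{2a})$, $T(p^{2a})$ and $T(p^{2(a-1)})$, with coefficients that are explicit polynomials in $p$ and $\chi_\mathcal{D}(p)$. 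Recursive substitution then shows that every $T(p^{2a})$ lies in $\C[T(p^2)]$, which proves commutativity of $\mathcal{T}_p$. Since the $T(\n^2)$ are commuting self-adjoint operators on the finite-dimensional Hilbert space $\mathrm{S}_k(\mathcal{D})$, they admit a simultaneous eigenbasis.

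For convergence, I would appeal to a polynomial bound $|\lambda(\n^2)|=O(\n^{k-1+\varepsilon})$ on Hecke eigenvalues of cusp forms, which follows from the polynomial growth of Fourier coefficients of $f$ together with the explicit action of $T(\n^2)$ on them; this yields absolute convergence of $L(f,s)$ on $\Rep(s)>k$. Coprime multiplicativity of eigenvalues then factorises $L(f,s) = \prod_{p\nmid N}\sum_{a\ge 0}\lambda(p^{2a})p^{-as}$. Substituting the recursion derived above into the local generating function produces a rational function in $p^{-s}$, whose denominator and numerator can be matched directly with the statement; the quadratic character identity $\chi_\mathcal{D}(p)^2=1$ is used to factor the numerator as $(1-\chi_\mathcal{D}(p)p^{k-2-s})(1+\chi_\mathcal{D}(p)p^{k-1-s})$.

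\textbf{Main obstacle.} The technical heart is the explicit double-coset computation producing the recursion with the precise constants reflected in the denominator $1-(\lambda(p^2)+\chi_\mathcal{D}(p)(1-p)p^{k-2})p^{-s}+p^{2k-2-2s}$. Once equation (\ref{eq:actionOfAlphaWhenCoprime}) has converted the calculation to a character-twisted classical Hecke multiplication, the remaining combinatorics is standard, but one must carefully track the slash-operator weight $k$ and the normalising factor $\n^{k-2}$ in the definition of $T(\n^2)$ in order to arrive at exactly the constants appearing in the statement.
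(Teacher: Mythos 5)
Your treatment of the algebra is essentially the paper's: self-adjointness and coprime multiplicativity come from Theorem \ref{thm:BruinierStein}, and commutativity is reduced to a local three-term recursion at each $p\nmid N$ obtained from the character-twisted double-coset calculus of equation (\ref{eq:actionOfAlphaWhenCoprime}). The only organisational difference is that the paper first passes to the auxiliary operators $\widetilde{T}(\n^2)$ built from \emph{all} integral matrices of determinant $\n^2$ (related to the $T(d^2)$ by $\widetilde{T}(\n^2)=\sum_{d\mid\n}\chi_\mathcal{D}(\n/d)(\n/d)^{k-2}T(d^2)$); the recursion (\ref{eq:TprFromTp}) and the local Euler factor are then cleaner, and the final answer is obtained by converting back via $\widetilde{\lambda}(p^2)=\lambda(p^2)+\chi_\mathcal{D}(p)p^{k-2}$. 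Your plan of deriving the recursion directly for the primitive operators $T(p^{2a})$ is workable but will make the bookkeeping heavier; this part of your proposal is fine.

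The genuine gap is in the convergence argument. You claim $|\lambda(\n^2)|=O(\n^{k-1+\varepsilon})$ and assert that this ``follows from the polynomial growth of Fourier coefficients of $f$ together with the explicit action of $T(\n^2)$ on them.'' It does not. The trivial (Hecke) bound for cusp form coefficients is $c(\gamma,n)=O(n^{k/2})$, and comparing a fixed non-vanishing coefficient of $f$ with the corresponding coefficient of $T(\n^2)f$ only yields $\lambda(\n^2)=O(\n^{k})$, hence absolute convergence merely for $\Rep(s)>k+1$. The bound $O(\n^{k-1+\varepsilon})$ you want is the Ramanujan--Petersson bound for $T(\n^2)$, which is a deep theorem (Deligne) and not a consequence of coefficient growth; alternatively one needs the Rankin--Selberg average $\sum_{n\le x}|c(\gamma,n)|^2=O(x^{k})$. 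The paper circumvents this by a reduction to the scalar case: it pairs $f$ against the vectors $v_{\gamma,\chi}=\sum_{x\in(\Z/n\Z)^\times}\chi(x)^{-1}e^{x\gamma}$, shows that $\langle\widetilde{T}(\n^2)f,v_{\gamma,\chi}\rangle=\chi(\n)T^{\Gamma(N)}(\n^2)\langle f,v_{\gamma,\chi}\rangle$, chooses $(\gamma,\chi)$ with $g=\langle f,v_{\gamma,\chi}\rangle\neq 0$ so that $g$ is a classical simultaneous eigenform on $\Gamma(N)$ with eigenvalues $\chi(\n)^{-1}\widetilde{\lambda}(\n^2)$, and then quotes the known convergence of the classical series $\sum\lambda(n^2)n^{-s}$ for $\Rep(s)>k$. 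You need either this reduction or an explicit appeal to Deligne/Rankin--Selberg; as written, your convergence claim is unsupported and the stated half-plane $\Rep(s)>k$ is not reached.
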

\begin{proof}
	Let $(l,N) = 1$ and $\widetilde{M_\n} \coloneqq \{M = \left(\begin{smallmatrix}a&b\\c&d\end{smallmatrix}\right)\in\mathrm{Mat}_2(\Z)\mid\det(M) = \n^2\}$. We define operators $\widetilde{T}(\n^2)$ by
	\[ \widetilde{T}(\n^2)f \coloneqq \n^{k-2}\sum_{\delta\in\Gamma\backslash\widetilde{M_\n}}\rho_\mathcal{D}(\delta)^{-1}f|_k[\delta], \]
	where $\rho_\mathcal{D}(\delta)^{-1}$ acts as $\chi_\mathcal{D}(\n)\rho_\mathcal{D}(\Tilde{\delta}^{-1})$ for any representative $\Tilde{\delta}\in\Gamma$ of $\n^{-1}\delta\in\SLn{N}$. By equation (\ref{eq:actionOfAlphaWhenCoprime}) this extends the previously defined action of $M_\n$. Clearly
	\[ \widetilde{M_\n} = \bigcup_{d\mid\n}\frac{\n}{d}M_d \]
	so that
	\[ \widetilde{T}(\n^2) = \sum_{d\mid \n}\chi_\mathcal{D}\left(\frac{\n}{d}\right)\left(\frac{\n}{d}\right)^{k-2}T(d^2), \]
	which implies $T(p^{2r}) = \widetilde{T}(p^{2r}) - \chi_\mathcal{D}(p)p^{k-2}\widetilde{T}(p^{2(r-1)})$ for a prime $p$. Therefore, the operators $T(\n^2)$ and $\widetilde{T}(\n^2)$ generate the same algebra of operators. Furthermore, as in the classical scalar case, we can show that for $r\geq2$ we have
	\begin{equation}\label{eq:TprFromTp}
		\widetilde{T}(p^{2r}) = \widetilde{T}(p^{2(r-1)})(\widetilde{T}(p^2)-\chi_\mathcal{D}(p)p^{k-1}) - p^{2k-2}\widetilde{T}(p^{2(r-2)})
	\end{equation}
	and so this algebra is a commutative algebra of self-adjoint operators. (More details of this Hecke algebra can be found in the upcoming \cite{Me}.) This implies that $\mathrm{S}_k(\mathcal{D})$ has a basis consisting of simultaneous eigenforms. Let $f$ be an eigenform with eigenvalues $\lambda(\n^2)$ for $T(\n^2)$ and $\widetilde{\lambda}(\n^2)$ for $\widetilde{T}(\n^2)$. We define
	\[ \widetilde{L}(f,s) \coloneqq \sumstack{\n=1 \\ (\n,N) = 1}^\infty\frac{\widetilde{\lambda}(\n^2)}{\n^s}. \]
	Then we have
	\begin{align*}
		L(f,s) &= \prod_{p\nmid N}\sum_{r = 0}^\infty\frac{\lambda(p^{2r})}{p^{rs}} \\
		&= \prod_{p\nmid N}\left(\sum_{r = 0}^\infty\frac{\widetilde{\lambda}(p^{2r})}{p^{rs}} - \chi_\mathcal{D}(p)p^{k-2-s}\sum_{r = 0}^\infty\frac{\widetilde{\lambda}(p^{2r})}{p^{rs}}\right)  \\
		&= \prod_{p\nmid N}(1-\chi_\mathcal{D}(p)p^{k-2-s})\left(\sum_{r = 0}^\infty\frac{\widetilde{\lambda}(p^{2r})}{p^{rs}}\right) \\
		&= \widetilde{L}(f,s)\prod_{p\nmid N}(1-\chi_\mathcal{D}(p)p^{k-2-s}).
	\end{align*}
	The convergence of the latter product is clear for $\Rep(s)>k$, as it the reciprocal of a Dirichlet series. We want to deduce the convergence of $\widetilde{L}(f,s)$ from the scalar situation. Let $\gamma\in\mathcal{D}$ be of order $n$ and $\chi:(\Z/n\Z)^\times\rightarrow\C^\times$ a character. Define
	\begin{align*}
		v_{\gamma,\chi} \coloneqq \sum_{x\in(\Z/n\Z)^\times}\chi(x)^{-1}e^{x\gamma}.
	\end{align*}
	Let $R_a\in\SL$ be any preimage of $\left(\begin{smallmatrix} a^{-1}&0\\0&a \end{smallmatrix}\right)\in\SLn{N}$. Then
	\[ \left\{R_a\left(\begin{smallmatrix}
		a&bN\\
		0&d
	\end{smallmatrix}\right)\mid ad = \n^2,0\leq b<d \right\} \]
	is a system of representatives for $\Gamma\backslash\widetilde{M_\n}$, but also for
	\[ \Gamma(N)\backslash\{M\in\mathrm{Mat}_2(\Z)\mid\det(M) = \n^2, M = \left(\begin{smallmatrix}1&0\\0&\n^2\end{smallmatrix}\right)\bmod N\}. \]
	Using this system of representatives we compute 
	\begin{align*}
		\langle \widetilde{T}(\n^2)f,v_{\gamma,\chi}\rangle = \chi(\n)T^{\Gamma(N)}(\n^2)\langle f,v_{\gamma,\chi}\rangle,
	\end{align*}
	where $T^{\Gamma(N)}(\n^2)$ is the standard Hecke operator on $\mathrm{S}_k(\Gamma(N))$ (cf.\ \cite[Theorem 31 (ii)]{Wr}). Since the elements of the form $v_{\gamma,\chi}$ generate $\C[\D]$, we find a pair $(\gamma,\chi)$ such that
	\begin{align*}
		g \coloneqq \langle f,v_{\gamma,\chi}\rangle \not= 0
	\end{align*}
	and $g\in\mathrm{S}_k(\Gamma(N))$ is a simultaneous eigenform with eigenvalues $\chi(\n)^{-1}\widetilde{\lambda}(\n^2)$. Now the convergence of $\widetilde{L}(f,s)$ follows from the scalar case (see for example \cite{R}). Again we write
	\begin{align*}
		\sumstack{\n=1 \\ (\n,N) = 1}^\infty\frac{\widetilde{\lambda}(\n^2)}{\n^s} &= \prod_{p\nmid N}\sum_{r=0}^\infty\frac{\widetilde{\lambda}(p^{2r})}{p^{rs}}.
	\end{align*}
	From (\ref{eq:TprFromTp}) we obtain
	\begin{align*}
		\sum_{r=0}^\infty\frac{\widetilde{\lambda}(p^{2r})}{p^{rs}} &= 1 + \frac{\widetilde{\lambda}(p^2)}{p^s} + (\widetilde{\lambda}(p^2)-\chi_\mathcal{D}(p)p^{k-1})\cdot \sum_{r=2}^\infty\frac{\widetilde{\lambda}(p^{2(r-1)})}{p^{rs}}-p^{2k-2}\cdot\sum_{r=2}^\infty\frac{\widetilde{\lambda}(p^{2(r-2)})}{p^{rs}} \\
		&= 1 +\chi_\mathcal{D}(p)p^{k-1-s} + \frac{\widetilde{\lambda}(p^2)-\chi_\mathcal{D}(p)p^{k-1}}{p^s}\cdot \sum_{r=0}^\infty\frac{\widetilde{\lambda}(p^{2r})}{p^{rs}}-p^{2k-2-2s}\cdot\sum_{r=0}^\infty\frac{\widetilde{\lambda}(p^{2r})}{p^{rs}},
	\end{align*}
	which implies
	\begin{align*}
		\sum_{r=0}^\infty\frac{\widetilde{\lambda}(p^{2r})}{p^{rs}} &=\frac{1 +\chi_\mathcal{D}(p)p^{k-1-s}}{1-(\widetilde{\lambda}(p^2)-\chi_\mathcal{D}(p)p^{k-1})p^{-s} + p^{2k-2-2s}}.
	\end{align*}
	The theorem now follows from $\widetilde{{\lambda}}(p^2) = \lambda(p^2) + \chi_\D(p)p^{k-2}$.
\end{proof}

We also want to study the behaviour of the Hecke operator $T(p^{2r})$ for $p\mid N$ and $r\in\Z_{\geq0}$. Let $x\in\Q/\Z$ with $Nx=0\bmod1$. Then
\begin{align*}
	\chi_x\left(\matr{a}{b}{c}{d}\right) = e(bx)
\end{align*}
is a character on $\Gamma_1(N)$. For $p\mid N$ and $r\in\Z_{\geq0}$ we define an operator $T^x(p^{2r}):\mathrm{M}_k(\Gamma_1(N),\chi_{p^{2r}x})\rightarrow\mathrm{M}_k(\Gamma_1(N),\chi_{x})$ by
\begin{align*}
	T^x(p^{2r})f \coloneqq p^{rk-2r}\sum_{b=0}^{p^{2r}-1}e(-bx)f|_k\left[\matr{1}{b}{0}{p^{2r}}\right].
\end{align*}
It is not difficult to verify that this is well-defined.
\begin{prp}\label{prp:HeckeP}
Let $\mathcal{D}$ be a discriminant form of even signature, $p$ a prime and $r\geq1$. Let $f\in\mathrm{M}_k(\mathcal{D})$ and $\gamma\in \mathcal{D}$ with $\gamma\not\in\mathcal{D}^{p}$ and if $p=2$, also $\gamma\not\in\mathcal{D}^{2*}$. Then
\begin{align*}
    \langle T(p^{2r})f,e^\gamma\rangle = T^{\q(\gamma)}(p^{2r})\langle f,e^{p^r\gamma}\rangle.
\end{align*}
\end{prp}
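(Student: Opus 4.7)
The plan is to fix an explicit set of right-coset representatives for $\Gamma\backslash M_{p^r}$ and compute the pairing with $e^\gamma$ of each summand in
\[ T(p^{2r})f = p^{r(k-2)}\sum_\delta \rho_\mathcal{D}(\delta)^{-1}f|_k[\delta]. \]
By Lemma~\ref{lem:DoubleCosetSet} one may take the primitive upper-triangular representatives $\delta_{i,b} = \matr{p^i}{b}{0}{p^{2r-i}}$ with $0\le i\le 2r$ and $0\le b<p^{2r-i}$; the primitivity condition $\gcd(p^i,b,p^{2r-i})=1$ forces $p\nmid b$ when $0<i<2r$, is automatic when $i=0$, and forces $b=0$ when $i=2r$. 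The strategy is to split the sum over these cosets into the three ranges of $i$, identifying the $i=0$ contribution with $T^{\q(\gamma)}(p^{2r})\langle f,e^{p^r\gamma}\rangle$ and showing that the remaining contributions vanish under the hypothesis on $\gamma$.

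For $i=0$ write $\delta_b:=\delta_{0,b}=\beta\,n(b)$ with $\beta = \matr{1}{0}{0}{p^{2r}}=(-J)\alpha J$. The rule $\rho_\mathcal{D}(A\alpha B)^{-1}=\rho_\mathcal{D}(B^{-1})\rho_\mathcal{D}(\alpha)^{-1}\rho_\mathcal{D}(A^{-1})$ applied to $\delta_b=(-J)\alpha(Jn(b))$, combined with the identity $\rho_\mathcal{D}(\beta)^{-1}e^\mu=\sum_{\nu:\,p^r\nu=\mu}e^\nu$ recorded earlier in the section and the diagonal action of $n(-b)$, gives
\[ \rho_\mathcal{D}(\delta_b)^{-1}e^\mu = \rho_\mathcal{D}(n(-b))\rho_\mathcal{D}(\beta)^{-1}e^\mu = \sum_{\nu:\,p^r\nu=\mu}e(-b\,\q(\nu))\,e^\nu. \]
Pairing with $e^\gamma$ forces $\mu=p^r\gamma$ and leaves the factor $e(-b\,\q(\gamma))$; in view of $p^{r(k-2)}=p^{rk-2r}$, summing over $b$ reproduces exactly the defining expression of $T^{\q(\gamma)}(p^{2r})\langle f,e^{p^r\gamma}\rangle$. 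As a consistency check, $\langle f,e^{p^r\gamma}\rangle$ lies in $\mathrm{M}_k(\Gamma_1(N),\chi_{p^{2r}\q(\gamma)})$, which is precisely the domain of $T^{\q(\gamma)}(p^{2r})$, and the output carries the character $\chi_{\q(\gamma)}$, matching the transformation law of $\langle T(p^{2r})f,e^\gamma\rangle$.

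For $i=2r$ the only representative is $\alpha$, and $\rho_\mathcal{D}(\alpha)^{-1}e^\mu=e^{p^r\mu}$ has $e^\gamma$-coefficient $[\gamma=p^r\mu]$; this vanishes under $\gamma\notin\mathcal{D}^p$ since $p^r\mathcal{D}\subset\mathcal{D}^p$. The delicate case, and the principal obstacle, is $0<i<2r$. There I would pick $s,t\in\Z$ with $sp^i+tb=1$ (available because $p\nmid b$) and carry out a Smith-normal-form reduction to write $\delta_{i,b}=A\alpha B$ with $A=\matr{0}{1}{-1}{tp^{2r-i}}$ and $B=\matr{t}{-s}{p^i}{b}$ in $\SL$. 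Unfolding $\rho_\mathcal{D}(\delta_{i,b})^{-1}=\rho_\mathcal{D}(B^{-1})\rho_\mathcal{D}(\alpha)^{-1}\rho_\mathcal{D}(A^{-1})$ through Bruhat decompositions of $A^{-1}$ and $B^{-1}$ and the explicit formulas for $\rho_\mathcal{D}(n(S))$ and $\rho_\mathcal{D}(J)$, the pairing $\langle\rho_\mathcal{D}(\delta_{i,b})^{-1}e^\mu,e^\gamma\rangle$ collapses to a Gauss-type character sum on a coset of the subgroup of $\mathcal{D}$ annihilated by $p^i$. The hypothesis $\gamma\notin\mathcal{D}^p$ makes the linear character $(\,\cdot\,,\gamma)$ nontrivial on that subgroup, while the additional hypothesis $\gamma\notin\mathcal{D}^{2*}$ at $p=2$ is exactly what is needed to prevent the quadratic ($2$-adic oddity) twist from cancelling the linear obstruction; orthogonality of characters then forces the Gauss sum, hence the whole contribution, to vanish. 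Combining the three ranges yields the asserted identity.
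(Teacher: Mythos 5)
Your overall skeleton is the same as the paper's: the same primitive upper-triangular representatives $\delta_{i,b}=\matr{p^i}{b}{0}{p^{2r-i}}$, the same three-way split in $i$, and your treatment of the extreme cases is correct and complete. For $i=0$ the identity $\rho_\mathcal{D}(\delta_{0,b})^{-1}=\rho_\mathcal{D}(n(-b))\rho_\mathcal{D}(\beta)^{-1}$ and the resulting factor $e(-b\q(\gamma))$ on the component $\langle f,e^{p^r\gamma}\rangle$ reproduce exactly the paper's computation (the paper does it by adjointness, you do it directly; this is immaterial), and for $i=2r$ the vanishing because $p^r\mathcal{D}\subset\mathcal{D}^p$ is the same argument.

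The gap is in the range $0<i<2r$, which is the heart of the proof. You assert that after the decomposition $\delta_{i,b}=A\alpha B$ the pairing ``collapses to a Gauss-type character sum on a coset of the subgroup of $\mathcal{D}$ annihilated by $p^i$'' and that it vanishes by orthogonality of characters because $(\cdot,\gamma)$ is nontrivial there. Neither half of this is right as stated. First, the explicit formula for $\rho_\mathcal{D}\left(\matr{x}{y}{p^i}{b}\right)$ produces a sum over the coset $\mathcal{D}^{p^i*}$ of $\mathcal{D}^{p^i}=(\mathcal{D}_{p^i})^\perp$, not over a coset of $\mathcal{D}_{p^i}$, and that sum carries the quadratic phase $e(x\q_{p^i}(\mu))$; a Gauss sum with a nondegenerate quadratic part does \emph{not} vanish merely because its linear twist is nontrivial, so ``orthogonality of characters'' is not a valid inference here. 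Second, and more importantly, no cancellation among the terms is actually needed: the mechanism is a support argument. In the paper one writes $\rho_\mathcal{D}(\delta_{i,b})^{-1}{}^{*}e^\gamma=\rho_\mathcal{D}(A)\rho_\mathcal{D}(\beta)^{-1}\rho_\mathcal{D}(B)e^\gamma$, observes that $\rho_\mathcal{D}(B)e^\gamma$ is supported on the coset $b\gamma+\mathcal{D}^{p^i*}$, and shows that \emph{no} element of that coset lies in $p^r\mathcal{D}$ --- for $p$ odd because $\mathcal{D}^{p^i*}=\mathcal{D}^{p^i}\subset\mathcal{D}^p$ while $b\gamma\notin\mathcal{D}^p$ (as $(b,p)=1$), and for $p=2$ using $\mathcal{D}^{2^i*}\subset\mathcal{D}^{2^{i-1}}$ together with the extra hypothesis $\gamma\notin\mathcal{D}^{2*}$ in the case $i=1$. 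Hence every basis vector in the support is annihilated by $\rho_\mathcal{D}(\beta)^{-1}$ individually, term by term. You need to replace the orthogonality heuristic by this divisibility argument (or an equivalent one); as written, the crucial middle-range vanishing is not established.
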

\begin{proof}
A system of representatives $\delta_i$ of the right coset decomposition of $M_{p^{2r}}$ is given by
\begin{align*}
    \left\{\delta_{s,b}=\begin{pmatrix} p^s & b \\ 0 & p^{2r-s}\end{pmatrix}\bigm\vert 0\leq s\leq 2r,0\leq b<p^{2r-s}\text{ and } (b,p) = 1\text{ if } 0< s < 2r\right\}
\end{align*}
so that
\begin{align*}
	\langle T(p^{2r})f,e^\gamma\rangle = p^{rk-2r}\langle\rho_\mathcal{D}(\alpha)^{-1}f|_k[\alpha],e^\gamma\rangle &+ p^{rk-2r}\sum_{s=1}^{2r-1}\sumstack{b=0 \\ (b,p) = 1}^{p^{2r-s}-1}\langle\rho_\mathcal{D}(\delta_{s,b})^{-1}f|_k[\delta_{s,b}],e^\gamma\rangle \\
	&+ p^{rk-2r}\sum_{b=0}^{p^{2r}-1}\langle\rho_\mathcal{D}(\delta_{0,b})^{-1}f|_k[\delta_{0,b}],e^\gamma\rangle.
\end{align*}
Recall that
\begin{align*}
	\rho_\mathcal{D}(\beta)^{-1}e^\gamma = \sumstack{\mu\in \mathcal{D} \\ p^r\mu = \gamma}e^\mu
\end{align*}
and so
\begin{align*}
	\langle\rho_\mathcal{D}(\alpha)^{-1}f|_k[\alpha],e^\gamma\rangle &= \langle f|_k[\alpha],\rho_\mathcal{D}(\beta)^{-1}e^\gamma\rangle = 0
\end{align*}
because $\gamma\not\in \mathcal{D}^p$. For a given $s<2r$ and $b$, there exist $x,y\in\Z$ such that $xb-yp^s = 1$. Hence, we can write
\begin{align*}
    \begin{pmatrix} p^s & b \\ 0 & p^{2r-s}\end{pmatrix} = \begin{pmatrix} 0 & 1 \\ -1 & p^{2r-s}x\end{pmatrix}\begin{pmatrix} p^{2r} & 0 \\ 0 & 1\end{pmatrix}\begin{pmatrix} x & y \\ p^s & b\end{pmatrix}.
\end{align*}
Let $0<s<2r$. Similar to $s=2r$, using the explicit formula for the action of an arbitrary element in the Weil representation from \cite[Theorem 4.7]{S2} we obtain
\begin{align*}
    \langle\rho_\mathcal{D}(\delta_{s,b})^{-1}f|_k[\delta_{s,b}],e^\gamma\rangle &= \langle f|_k[\delta_{s,b}],\rho_\mathcal{D}(\left(\begin{smallmatrix} 0 & 1 \\ -1 & p^{2r-s}x\end{smallmatrix}\right))\rho_\mathcal{D}(\beta)^{-1}\rho_\mathcal{D}(\left(\begin{smallmatrix} x & y \\ p^s & b\end{smallmatrix}\right))e^\gamma\rangle \\
    &= \langle f|_k[\delta_{s,b}],\rho_\mathcal{D}(\left(\begin{smallmatrix} 0 & 1 \\ -1 & p^{2r-s}x\end{smallmatrix}\right))\rho_\mathcal{D}(\beta)^{-1} \\
    &\qquad\quad\xi\frac{\sqrt{|\mathcal{D}_{p^s}|}}{\sqrt{|\mathcal{D}|}}\sum_{\mu\in \mathcal{D}^{p^s*}}e(x\q_{p^s}(\mu))e(y(\mu,\gamma))e(by\q(\gamma))e^{b\gamma+\mu}\rangle \\
    &= 0
\end{align*}
because of the following reasoning: Suppose that $p^{r}\gamma' = b\gamma+\mu$ for some $\gamma'\in \mathcal{D}$. If $p$ is odd, then $\mathcal{D}^{p^s*} = \mathcal{D}^{p^s}$ and $\mu=p^s\mu'$. But then also
\[ b\gamma = p(p^{r-1}\gamma'-p^{s-1}\mu'). \]
Since $(b,p) = 1$, this contradicts $\gamma\not\in \mathcal{D}^p$. If $p=2$, first consider $s=1$. Then $b\gamma = 2^{r}\gamma' - \mu\in\mathcal{D}^{2*}$ because $\mathcal{D}^{2*}$ is a coset of $\mathcal{D}^{2}$. It is not difficult to see that then also $\gamma\in\mathcal{D}^{2*}$ because $b$ is odd. If $s>1$, recall that $\mathcal{D}^{2^s*}\subset\mathcal{D}^{2^{s-1}}$. Hence, $\mu = 2^{s-1}\mu'$ and $b\gamma = 2(2^{r-1}\gamma'-2^{s-2}\mu')\in \mathcal{D}^2$ and since $b$ is odd, also $\gamma\in\mathcal{D}^2$. \\
Finally, $\delta_{0,b} = \beta\left(\begin{smallmatrix} 1 & b \\ 0 & 1\end{smallmatrix}\right)$ implies
\begin{align*}
    \langle\rho_\mathcal{D}(\delta_{0,b})^{-1}f|_{\delta_{0,b}},e^\gamma\rangle &= \langle f|_{\delta_{0,b}},\rho_\mathcal{D}(\alpha)^{-1}\rho_\mathcal{D}(\left(\begin{smallmatrix} 1 & b \\ 0 & 1\end{smallmatrix}\right))e^\gamma\rangle \\
    &= e(-b\q(\gamma))\langle f|_{\delta_{0,b}},e^{p^r\gamma}\rangle.
\end{align*}
Therefore, we find 
\[ \langle T(p^{2r})f,e^\gamma\rangle = p^{rk-2r}\sum_{b=0}^{p^{2r}-1}e(-b\q(\gamma))\langle f,e^{p^{r}\gamma}\rangle|_k[\delta_{0,b}]. \]
\end{proof} 

Let $\gamma,\mu\in\D$ and $p$ be a prime. We call the projection of $\gamma$ to the $p$-adic component of $\D$ the $p$-adic component of $\gamma$. For a finite set of primes $P$ denote by $\gamma_P^\mu\in\D$ the element whose $p$-adic components are equal to those of $\mu$ for all $p\in P$ and equal to those of $\gamma$ for all other $p$. For example $\gamma_\emptyset^\mu = \gamma$ and $\gamma_P^\mu = \mu$ if $P$ contains all primes $p\mid N$. For $a\in\Z$ we have $a\gamma_P^\mu = (a\gamma)_P^{a\mu}$. We define
\[ v_{\gamma,\mu,P}\coloneqq\sum_{S\subset P}(-1)^{|S|}e^{\gamma_{S}^\mu}. \]
Then we have

\begin{cor}\label{cor:Symmetry}
	Let $\mathcal{D}$ be a discriminant form of even signature, $P$ a finite set of prime numbers and $\gamma,\mu\in\D$. Assume that $\left(\prod_{p\in P}p\right)(\gamma -\mu) = 0$, $\q(\gamma) = \q(\mu)\bmod1$ and for all $p\in P$ assume $\gamma,\mu\not\in\mathcal{D}^{p}$ and if $2\in P$, also $\gamma,\mu\not\in\mathcal{D}^{2*}$. Let $s\in\C$ and $f\in\mathrm{S}_k(\mathcal{D})$ and assume that
	\[ \prod_{p\in P}\left(\sum_{r=0}^\infty\frac{T(p^{2r})}{p^{rs}}\right)f \]
	converges. Then 
	\[ \langle \prod_{p\in P}\left(\sum_{r=0}^\infty\frac{T(p^{2r})}{p^{rs}}\right)f,v_{\gamma,\mu,P}\rangle = \langle f,v_{\gamma,\mu,P}\rangle. \]
\end{cor}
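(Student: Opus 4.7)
My plan is to show that in the expansion
\[
\prod_{p\in P}\sum_{r\geq0}\tfrac{T(p^{2r})}{p^{rs}}=\sum_{\mathbf r}\tfrac{T(n_\mathbf{r}^2)}{n_\mathbf{r}^s},\qquad n_\mathbf{r}:=\prod_{p\in P}p^{r_p},
\]
(valid by the commutativity in Theorem \ref{thm:BruinierStein}) only the term $\mathbf r=\mathbf 0$ contributes to the pairing with $v_{\gamma,\mu,P}$, with every other term vanishing by a combinatorial cancellation, once an arithmetic reduction aligns all the Hecke superscripts. The arithmetic step is the following lemma: $\q(\gamma_p)\equiv\q(\mu_p)\pmod1$ holds for \emph{each} $p\in P$ individually. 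Indeed, $a_p:=\q(\gamma_p)-\q(\mu_p)$ has $p$-power denominator (since $\gamma_p,\mu_p\in\mathcal D_p$ and $p(\gamma_p-\mu_p)=0$), hence $a_p\in\Z_{(q)}$ for every prime $q\neq p$; reading the hypothesis $\sum_{p\in P}a_p\equiv0\pmod1$ inside $\Q_{(q)}/\Z_{(q)}$ for each fixed $q\in P$ isolates $a_q\in\Z_{(q)}$, which together with its $q$-power denominator forces $a_q\in\Z$, so $a_q\equiv0\pmod1$. In particular $\q(\gamma_S^\mu)\equiv\q(\gamma)\pmod1$ for every $S\subset P$.

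Iterating Proposition \ref{prp:HeckeP}---its hypothesis $\gamma_S^\mu\notin\mathcal D^p$ is preserved at each step because multiplication by primes $q\neq p$ is invertible on $\mathcal D_p$---then yields
\[
\langle T(n_\mathbf r^2)f,e^{\gamma_S^\mu}\rangle = \mathcal T_\mathbf r^{\q(\gamma_S^\mu)}\langle f,e^{n_\mathbf r\gamma_S^\mu}\rangle,
\]
where $\mathcal T_\mathbf r^x$ is the composite scalar Hecke-type operator obtained from successive applications of Proposition \ref{prp:HeckeP}; since each intermediate superscript is an integer multiple of $x$, it depends on $x$ only modulo $1$. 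By the arithmetic step the superscript collapses to $\q(\gamma)$ uniformly in $S$, so the operator factors out of the $S$-sum:
\[
\langle T(n_\mathbf r^2)f,v_{\gamma,\mu,P}\rangle=\mathcal T_\mathbf r^{\q(\gamma)}\sum_{S\subset P}(-1)^{|S|}\langle f,e^{n_\mathbf r\gamma_S^\mu}\rangle.
\]

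For the combinatorial cancellation, set $P_0:=\{p\in P:r_p=0\}$. For $p\notin P_0$ the factor $p^{r_p}$ ($r_p\geq1$) kills $(\gamma-\mu)_p$, so $n_\mathbf r\gamma_S^\mu$ depends on $S$ only through $S\cap P_0$. Writing $S=S_0\sqcup S_1$ with $S_0\subset P_0$ and $S_1\subset P\setminus P_0$, the $S$-sum factors as $\bigl(\sum_{S_0}(-1)^{|S_0|}\langle f,e^{n_\mathbf r\gamma_{S_0}^\mu}\rangle\bigr)\bigl(\sum_{S_1\subset P\setminus P_0}(-1)^{|S_1|}\bigr)$, and the second factor vanishes whenever $P\setminus P_0\neq\emptyset$, i.e.\ whenever $\mathbf r\neq\mathbf 0$. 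Only the term $\mathbf r=\mathbf 0$ survives, where $P_0=P$, $n_\mathbf r=1$, and the sum recovers $\langle f,v_{\gamma,\mu,P}\rangle$. The main obstacle I expect is the arithmetic first step: without the pointwise vanishing of each $a_p$, the operators $\mathcal T_\mathbf r^{\q(\gamma_S^\mu)}$ would genuinely depend on $S$ and the clean factorization would break down. The stated convergence hypothesis is what permits the interchange of the $\mathbf r$-summation with the pairing.
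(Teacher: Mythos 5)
Your proof is correct, but it is organized differently from the paper's. The paper argues by induction on $|P|$: it peels off one prime $p$ at a time using the two-term decomposition $v_{\gamma,\mu,P} = v_{\gamma,\mu_{\{p\}}^{\gamma},P\setminus\{p\}} - v_{\gamma_{\{p\}}^{\mu},\mu,P\setminus\{p\}}$, applies Proposition \ref{prp:HeckeP} to each piece, and observes that all the $r\geq1$ terms cancel pairwise because $p^{r}\gamma = p^{r}\gamma_{\{p\}}^{\mu}$ and $p^{r}\mu = p^{r}\mu_{\{p\}}^{\gamma}$. You instead expand the full product into a sum over multi-indices $\mathbf r$ (using the multiplicativity of Theorem \ref{thm:BruinierStein}), iterate Proposition \ref{prp:HeckeP} once for each prime with $r_p\geq1$, and kill every term with $\mathbf r\neq\mathbf 0$ by factoring the alternating sum over $S\subset P$ through the subsets of $P\setminus P_0$. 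This is in effect the fully unfolded, closed form of the paper's induction; the underlying cancellation mechanism --- that $n_{\mathbf r}\gamma_S^{\mu}$ is independent of whether $p\in S$ once $r_p\geq1$ --- is identical. Two points in your write-up are actually more careful than the paper's: you prove the prime-by-prime congruence $\q(\gamma_p)\equiv\q(\mu_p)\bmod 1$ via coprimality of denominators, whereas the paper only asserts $\q(\gamma_S^{\mu}) = \q(\gamma)\bmod1$ for all $S$; and you note explicitly why the hypotheses of Proposition \ref{prp:HeckeP} (non-membership in $\mathcal D^{p}$ and $\mathcal D^{2*}$) persist under multiplication by the previously processed primes. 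What your version costs is a slightly heavier justification of the rearrangement of the product of series into a single multi-indexed sum, but since $\mathrm{S}_k(\mathcal D)$ is finite-dimensional and the product is an iterated application of finitely many convergent operator series, this is harmless and on the same level of rigor as the paper's own term-by-term interchange.
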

\begin{proof}
	We prove the statement by induction on $|P|$: Note that for $P=\emptyset$ there is nothing to prove. \\
	So let $|P|\geq1$ and assume that the assertion holds for all sets smaller than $P$. Let $p\in P$. We have
	\[ v_{\gamma,\mu,P} = v_{\gamma,\mu_{\{p\}}^\gamma,P\setminus\{p\}} - v_{\gamma_{\{p\}}^\mu,\mu,P\setminus\{p\}}. \]
	The pairs $(\gamma,\mu_{\{p\}}^\gamma)$ and $(\gamma_{\{p\}}^\mu,\mu)$ satisfy the conditions of the corollary for the set of primes $P\setminus\{p\}$ and so by the induction hypothesis we have
	\[ \langle \prod_{q\in P\setminus\{p\}}\left(\sum_{r=0}^\infty\frac{T(q^{2r})}{q^{rs}}\right)f,v_{\gamma,\mu,P}\rangle = \langle f,v_{\gamma,\mu,P}\rangle. \]
	In fact, all elements of the form $\gamma_{S}^\mu$ for $S\subset P$ satisfy the conditions of Proposition \ref{prp:HeckeP} with $\q(\gamma_{S}^\mu) = \q(\gamma) = \q(\mu)\bmod1$. Hence, we have for $r\geq1$
	\begin{align*}
		\langle T(p^{2r})f,v_{\gamma,\mu_{\{p\}}^\gamma,P\setminus\{p\}}\rangle &= T^{\q(\gamma)}(p^{2r})\langle f,v_{p^r\gamma,p^r\mu_{\{p\}}^{\gamma},P\setminus\{p\}}\rangle \text{ and} \\
		\langle T(p^{2r})f,v_{\gamma_{\{p\}}^\mu,\mu,P\setminus\{p\}}\rangle &= T^{\q(\gamma)}(p^{2r})\langle f,v_{p^r\gamma_{\{p\}}^{\mu},p^r\mu,P\setminus\{p\}}\rangle.
	\end{align*}
	Furthermore, $p\gamma = p\gamma_{\{p\}}^{\mu}$ and $p\mu = p\mu_{\{p\}}^{\gamma}$, so that the right-hand sides are equal, thus
	\begin{align*}
		\langle \left(\sum_{r=0}^\infty\frac{T(p^{2r})}{p^{rs}}\right)f,v_{\gamma,\mu,P}\rangle &= \langle f,v_{\gamma,\mu,P}\rangle + \langle \left(\sum_{r=1}^\infty\frac{T(p^{2r})}{p^{rs}}\right)f,v_{\gamma,\mu_{\{p\}}^\gamma,P\setminus\{p\}} - v_{\gamma_{\{p\}}^\mu,\mu,P\setminus\{p\}}\rangle \\
		&= \langle f,v_{\gamma,\mu,P}\rangle + \sum_{r=1}^\infty\frac{T^{\q(\gamma)}(p^{2r})}{p^{rs}}\langle f,v_{p^r\gamma,p^r\mu_{\{p\}}^{\gamma},P\setminus\{p\}} - v_{p^r\gamma_{\{p\}}^{\mu},p^r\mu,P\setminus\{p\}}\rangle \\
		&= \langle f,v_{\gamma,\mu,P}\rangle.
	\end{align*}
\end{proof}

Finally, we want to find a kernel function for the Hecke operators. We will need
\begin{lem}\label{lem:cotSum}
	Let $k\geq2$ and $z\in\mathbb{H}$. Then for $x\in\Q$
	\begin{equation*}
		\sum_{n=-\infty}^\infty e(nx)(z+n)^{-k} = \frac{(-2\pi i)^k}{(k-1)!}\sumstack{r\in\Z - x \\ r>0} r^{k-1}e(rz).
	\end{equation*}
\end{lem}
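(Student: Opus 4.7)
The claim is a twisted form of the classical Lipschitz formula, and the plan is to prove it by applying Poisson summation to the function $f(t) = e(tx)(z+t)^{-k}$ on $\R$. Since $v \coloneqq \mathrm{Im}(z)>0$ and $k\geq 2$, both $f$ and (as the computation below will show) $\hat f$ decay at least like $|t|^{-k}$ at infinity, so the Poisson summation identity $\sum_{n\in\Z}f(n)=\sum_{\xi\in\Z}\hat f(\xi)$ is valid.

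The heart of the argument is the Fourier transform computation
\begin{equation*}
    \hat f(\xi) = \int_{\R}e(t(x-\xi))(z+t)^{-k}\,dt.
\end{equation*}
Substituting $s=z+t$ rewrites this as an integral along the horizontal line $\R+iv$,
\begin{equation*}
    \hat f(\xi) = e(-z(x-\xi))\int_{\R+iv}e(s(x-\xi))\,s^{-k}\,ds.
\end{equation*}
When $\xi < x$ the exponential factor decays in the upper half-plane; closing the contour upward (the arcs are controlled by the bound $|s^{-k}|\le R^{-k}$ with $k\ge 2$) encloses no pole of $s^{-k}$, so $\hat f(\xi)=0$. When $\xi > x$ I close the contour downward, picking up $-2\pi i$ times the residue of $e(s(x-\xi))\,s^{-k}$ at $s=0$; a Taylor expansion identifies this residue as $(2\pi i(x-\xi))^{k-1}/(k-1)!$, and rearranging signs via $(-1)^k(2\pi i)^k=(-2\pi i)^k$ yields
\begin{equation*}
    \hat f(\xi) = \frac{(-2\pi i)^k(\xi-x)^{k-1}}{(k-1)!}\,e(z(\xi-x)).
\end{equation*}

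Inserting these Fourier coefficients into the Poisson summation formula and relabelling $r = \xi - x$ — which then runs over $\Z - x$ and is automatically positive since $\hat f$ vanishes otherwise — produces exactly the right-hand side of the claim. The only genuinely delicate step is justifying the contour shift and keeping the signs straight in the residue calculation; the decay estimates needed for Poisson summation are immediate from $k\ge 2$. As an alternative approach, since $x\in\Q$ one could split the sum on the left into residue classes modulo the denominator of $x$ and reduce to the untwisted Lipschitz formula applied to each class, with the inner Gauss-type sum over residues selecting precisely the indexing $r\in\Z-x$ with $r>0$.
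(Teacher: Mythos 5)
Your proof is correct, but it takes a genuinely different route from the paper's. The paper treats the classical Lipschitz formula (the case $x\in\Z$, quoted from Miyake) as known, splits the sum over $n$ into residue classes modulo the denominator $q$ of $x=p/q$, applies that formula with $z$ replaced by $(z+m)/q$, and then uses the orthogonality relation $\frac{1}{q}\sum_{m=0}^{q-1}e\left(m\left(x+\frac{r}{q}\right)\right)$ to select exactly the terms with $r=-p\bmod q$, which produces the indexing $r\in\Z-x$, $r>0$; this is precisely the ``alternative approach'' you sketch in your final sentence. You instead prove the identity from scratch by Poisson summation and a contour shift. Your computation of $\hat f$ is right, including the sign bookkeeping $(-1)^k(2\pi i)^k=(-2\pi i)^k$, and Poisson summation is legitimate here since $f$ is smooth with $f(t)=O(|t|^{-k})$ and $\hat f$ is supported on $\xi>x$ with exponential decay, so both series converge absolutely. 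Two small points you should add: the boundary case $\xi=x$ (which arises only when $x\in\Z$) also gives $\hat f(\xi)=0$, because $\int_{\R+iv}s^{-k}\,\mathrm{d}s=0$ for $k\geq2$, and this is what makes the strict inequality $r>0$ correct; and your argument nowhere uses $x\in\Q$, so it in fact establishes the lemma for all real $x$, slightly more than the paper states. The trade-off is that the paper's reduction is shorter given the cited classical case, while yours is self-contained and more general.
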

\begin{proof}
	The case $x\in\Z$ is well-known (see e.g.\ \cite[(7.1.9)]{Mi}). For $x=\frac{p}{q}$ with $(p,q) = 1$ write
	\begin{align*}
	    \sum_{n=-\infty}^\infty e(nx)(z+n)^{-k} &= \sum_{m=0}^{q-1}\sum_{n=-\infty}^\infty e((m + qn)x)(z+m + qn)^{-k} \\
	    &= \sum_{m=0}^{q-1}e(mx)\frac{1}{q^k}\sum_{n=-\infty}^\infty\left(\frac{z+m}{q} + n\right)^{-k}.
	\end{align*}
	Applying the equation for $x\in\Z$ with $z$ replaced by $\frac{z+m}{q}$ we get
	\begin{align*}
	    \sum_{m=0}^{q-1}&e(mx)\frac{1}{q^k}\frac{(-2\pi i)^k}{(k-1)!}\sum_{r=1}^\infty r^{k-1}e\left(r\frac{z + m}{q}\right) \\
	    &= \frac{(-2\pi i)^k}{(k-1)!}\sum_{r=1}^\infty \left(\frac{r}{q}\right)^{k-1}e\left(\frac{r}{q}z\right)\frac{1}{q}\sum_{m=0}^{q-1}e\left(m\left(x + \frac{r}{q}\right)\right) \\
	    &= \frac{(-2\pi i)^k}{(k-1)!}\sum_{r=1}^\infty \left(\frac{r}{q}\right)^{k-1}e\left(\frac{r}{q}z\right)\begin{cases} 
	    1 & \text{if $r = -p\bmod q$} \\
	    0 & \text{else}
	    \end{cases} \\
	    &= \frac{(-2\pi i)^k}{(k-1)!}\sumstack{r\in\Z - x \\ r>0} r^{k-1}e(rz).
	\end{align*}
\end{proof}
For $\n\in\Z_{>0}$ define functions $\omega_\n:\mathbb{H}\times\mathbb{H}\rightarrow\C[\mathcal{D}]\otimes\C[\mathcal{D}]$ by
\begin{align*}
	\omega_\n(z,z') \coloneqq \sum_{\gamma\in \mathcal{D}}\sumstack{a,b,c,d\in\Z \\ ad-bc=\n^2 \\ (a,b,c,d) = 1}\frac{1}{(czz'+az+dz'+b)^k}\left(\rho_\mathcal{D}^{(1)}\left(\begin{pmatrix} a & b \\ c & d\end{pmatrix}\right)^{-1}e^\gamma\right)\otimes e^{\gamma}.
\end{align*}
Finally, we also define $\langle\cdot,\cdot\rangle:\C[\D]\times\left(\C[\D]\otimes\C[\D]\right)\to\C[\D]$ by
\[ \langle v,w\otimes u\rangle \coloneqq \langle v,w\rangle\cdot \overline{u} \]
for elements $v,w,u\in\C[\mathcal{D}]$ and extend antilinearly in the second argument. The following proposition adapts \cite[Proposition 1]{Z}, which is originally due to Petersson, to the vector valued case.
\begin{prp}\label{prp:HeckeKernels}
	Let
	\begin{align}\label{eq:Ck}
		C(k) \coloneqq \frac{i^{k}\pi}{2^{k-3}(k-1)}.
	\end{align}
	For $\n\in\Z_{>0}$ the function $\overline{C(k)}^{-1}\n^{2k-2}\omega_\n(z,-\overline{z'})$ is the kernel function for the Hecke operator $T(\n^2)$, i.e.
	\begin{align*}
		C(k)^{-1}\n^{2k-2}\int_{\Gamma\backslash\mathbb{H}}\langle f(z),\omega_\n(z,-\overline{z'})\rangle y^k\frac{\mathrm{d}x\mathrm{d}y}{y^2} = (T(\n^2)f)(z').
	\end{align*}
\end{prp}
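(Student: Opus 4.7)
The strategy is the vector-valued analogue of Petersson's classical kernel computation (cf.\ \cite{Z}, Proposition 1). I would split the proof into two main steps.

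First, I would establish a factorization of $\omega_\n$ in terms of $\omega_1$ using the right coset decomposition $M_\n = \bigsqcup_i \Gamma\delta_i$. Writing $M = \gamma_0 \delta_i$ with $\gamma_0 = \left(\begin{smallmatrix} a_0 & b_0 \\ c_0 & d_0 \end{smallmatrix}\right) \in \Gamma$ and $\delta_i = \left(\begin{smallmatrix} a_i & b_i \\ c_i & d_i \end{smallmatrix}\right)$, a short calculation using $Mz = \gamma_0(\delta_i z)$ and the factorization of the automorphy factor $cz+d = (c_i z + d_i)(c_0(\delta_i z) + d_0)$ gives
\begin{equation*}
czz' + az + dz' + b = (c_i z + d_i)\bigl(c_0(\delta_i z)z' + a_0(\delta_i z) + d_0 z' + b_0\bigr).
\end{equation*}
Combined with the composition rule $\rho_\D(\gamma_0 \delta_i)^{-1} = \rho_\D(\delta_i)^{-1}\rho_\D(\gamma_0)^{-1}$, regrouping the sum in $\omega_\n$ by coset yields
\begin{equation*}
\omega_\n(z, z') = \sum_i (c_i z + d_i)^{-k}(\rho_\D(\delta_i)^{-1}\otimes \mathrm{id})\,\omega_1(\delta_i z, z').
\end{equation*}

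Second, I would prove the reproducing kernel property in the case $\n = 1$, i.e.\ that $C(k)^{-1}\int_{\Gamma\backslash\mathbb{H}}\langle f(z), \omega_1(z,-\overline{z'})\rangle y^k \frac{\mathrm{d}x\mathrm{d}y}{y^2} = f(z')$ for $f \in \mathrm{S}_k(\D)$. I would parametrize $M \in \Gamma$ by its coset in $\Gamma_\infty\backslash\Gamma$ (equivalently by a primitive pair $(c,d)$), and use the inner sum over $\Gamma_\infty$ together with Lemma \ref{lem:cotSum} to convert $\sum_{n \in \Z} (z+n)^{-k}$ into Fourier coefficients of $f$. Standard unfolding $\int_{\Gamma\backslash\mathbb{H}} \to \int_{\Gamma_\infty\backslash\mathbb{H}}$ followed by the beta-integral $\int_0^\infty y^{k-2}e^{-4\pi ry}\,\mathrm{d}y$ then produces the Fourier expansion of $f(z')$, with the normalization constant $C(k) = i^k\pi/(2^{k-3}(k-1))$.

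To finish, I would substitute the factorization from step one into the integral, change variables $z\mapsto\delta_i^{-1}z$ in each summand (recalling $\det\delta_i = \n^2$), and use the adjoint relation $\langle\rho_\D(\alpha)^{-1}v,w\rangle = \langle v, \rho_\D(\beta)^{-1}w\rangle$ to move $\rho_\D(\delta_i)^{-1}$ onto $f$. After applying step two, each summand collapses to $(c_i z' + d_i)^{-k}\rho_\D(\delta_i)^{-1}f(\delta_i z')$. Summing over $i$ and tracking the powers of $\n$ reproduces $\n^{2k-2}\sum_i(c_i z' + d_i)^{-k}\rho_\D(\delta_i)^{-1}f(\delta_i z')$, which coincides with $(T(\n^2)f)(z')$ after expanding the definition via $(f|_k[\delta_i])(z') = \n^k(c_i z' + d_i)^{-k}f(\delta_i z')$.

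The principal difficulty will be the bookkeeping: the operator $\rho_\D(\delta_i)^{-1}$ is not unitary (since $\delta_i \notin \Gamma$), the change of variables $z \mapsto \delta_i^{-1}z$ does not preserve the hyperbolic measure (due to $\det\delta_i = \n^2$), and all the resulting non-trivial factors must combine into exactly the prefactor $\n^{2k-2}$ and the explicit Hecke formula. Absolute convergence of the Poincaré-type series defining $\omega_1$ requires $k \geq 3$, which is guaranteed by the standing hypothesis $k \geq m/2 \geq 4$ from Theorem \ref{thm:MainTheorem}.
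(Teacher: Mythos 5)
Your proposal is correct and follows essentially the same route as the paper: the explicit coset factorization of $\omega_\n$ in terms of $\omega_1$ is exactly the identity $T(\n^2)\omega_1(\cdot,z') = \n^{2k-2}\omega_\n(\cdot,z')$, which the paper combines with the self-adjointness of $T(\n^2)$ (Theorem \ref{thm:BruinierStein}) to reduce to $\n=1$, and the $\n=1$ case is handled in both arguments via Lemma \ref{lem:cotSum} and the Poincar\'e series. The only difference is that the paper cites the known Petersson coefficient formula $(f,P_{\gamma,r}) = 2\tfrac{(k-2)!}{(4\pi r)^{k-1}}c(\gamma,r)$ from \cite{Br} rather than redoing the unfolding and beta-integral computation by hand.
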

\begin{proof}
	First note that we can write
	\begin{align*}
		\omega_{\n}(z,z') &= \sum_{\gamma\in \mathcal{D}}\sumstack{a,b,c,d\in\Z \\ ad-bc = \n^2 \\ (a,b,c,d) = 1}\frac{(cz+d)^{-k}}{(z' + \frac{az+b}{cz+d})^k}\rho_\mathcal{D}^{(1)}\left(\begin{pmatrix}
			a & b \\
			c & d
		\end{pmatrix}^{-1}\right)e^\gamma\otimes e^\gamma \\
		&= \n^{-k}\sum_{\gamma\in \mathcal{D}}\sumstack{M\in M_\n}\frac{1}{(z' + \cdot)^k}\bigm|_k[M](z)\rho_\mathcal{D}^{(1)}(M)^{-1}e^\gamma\otimes e^\gamma.
	\end{align*}
	It is easily seen that $T(\n^2)\omega_1(\cdot,z') = \n^{2k-2}\omega_{\n}(\cdot,z')$. Hence, since the Hecke operators are self-adjoint, it suffices to prove the proposition for $\n=1$. We set $T = \left(\begin{smallmatrix}
		1&1\\0&1
	\end{smallmatrix}\right)$ and $\Gamma_\infty^+ = \langle T\rangle$. Then
	\begin{align*}
		\omega_1(z,z') &= \sum_{\gamma\in \mathcal{D}}\sum_{M\in\Gamma}\frac{1}{(z' + \cdot)^k}\bigm|_k[M](z)\rho_\mathcal{D}(M)^{-1}e^\gamma\otimes e^\gamma \\
		&= \sum_{\gamma\in \mathcal{D}}\sum_{M\in\Gamma_\infty^+\backslash\Gamma}\sum_{n=-\infty}^\infty\frac{1}{(z' + \cdot)^k}\bigm|_k[T^nM](z)\rho_\mathcal{D}(T^nM)^{-1}e^\gamma\otimes e^\gamma \\
		&= \sum_{\gamma\in \mathcal{D}}\sum_{M\in\Gamma_\infty^+\backslash\Gamma}\sum_{n=-\infty}^\infty\frac{e(-n\q(\gamma))}{(z' + \cdot+n)^k}\bigm|_k[M](z)\rho_\mathcal{D}(M)^{-1}e^\gamma\otimes e^\gamma \\
		&= \sum_{\gamma\in \mathcal{D}}\sumstack{M\in\Gamma_\infty^+\backslash\Gamma \\ M = \left(\begin{smallmatrix} a & b \\ c & d \end{smallmatrix}\right)}(cz+d)^{-k}\sum_{n=-\infty}^\infty\frac{e(-n\q(\gamma))}{(z' + Mz+n)^k}\rho_\mathcal{D}(M^{-1})e^\gamma\otimes e^\gamma.
	\end{align*}
	By Lemma \ref{lem:cotSum} this is
	\begin{align*}
		\omega_1(z,z') &= \sum_{\gamma\in \mathcal{D}}\sumstack{M\in\Gamma_\infty^+\backslash\Gamma \\ M = \left(\begin{smallmatrix} a & b \\ c & d \end{smallmatrix}\right)}(cz+d)^{-k}\frac{(-2\pi i)^k}{(k-1)!}\sumstack{r\in\Z + \q(\gamma) \\ r>0} r^{k-1}e(r(z'+Mz))\rho_\mathcal{D}(M^{-1})e^\gamma\otimes e^\gamma \\
		&= \frac{(-2\pi i)^k}{(k-1)!}\sum_{\gamma\in \mathcal{D}}\sumstack{r\in\Z + \q(\gamma) \\ r>0}r^{k-1}\left(\sumstack{M\in\Gamma_\infty^+\backslash\Gamma \\ M = \left(\begin{smallmatrix} a & b \\ c & d \end{smallmatrix}\right)}(cz+d)^{-k}e(rMz)\rho_\mathcal{D}(M^{-1})e^\gamma\right)\otimes e(rz') e^\gamma \\
		&= \frac{(-2\pi i)^k}{(k-1)!}\sum_{\gamma\in \mathcal{D}}\sumstack{r\in\Z + \q(\gamma) \\ r>0}r^{k-1}P_{\gamma,r}(z)\otimes e(rz')e^\gamma,
	\end{align*}
	where $P_{\gamma,r}$ is the Poincaré series of index $(\gamma, r)$ as defined in \cite{Br}. There it is shown that
	\begin{equation*}
		(f,P_{\gamma,r}) = 2\frac{(k-2)!}{(4\pi r)^{k-1}}c(\gamma,r)
	\end{equation*}
	for a cusp form
	\begin{equation*}
		f(\tau) = \sum_{\beta\in \mathcal{D}}\sumstack{r\in\Z+\q(\gamma) \\ r>0}c(\beta,r)e(r\tau)e^\beta.
	\end{equation*}
	We thus have
	\begin{align*}
		C(k)^{-1}(f,\omega_1(z,-\overline{z'})) &=  C(k)^{-1}\frac{(2\pi i)^k}{(k-1)!}\sum_{\gamma\in \mathcal{D}}\sumstack{r\in\Z + \q(\gamma) \\ r>0}r^{k-1}(f,P_{\gamma,r})e(rz') e^\gamma \\
		&= C(k)^{-1}2\frac{(2\pi i)^k}{(k-1)!}\frac{(k-2)!}{(4\pi)^{k-1}}\sum_{\gamma\in \mathcal{D}}\sumstack{r\in\Z+\q(\gamma) \\ r>0}c(\gamma,r)e(r\tau)e^\gamma \\
		&= f(z').
	\end{align*}
\end{proof}

\section{Vector valued Eisenstein series}\label{sec:VectorValuedEisensteinSeries}

In this section we will study a relation between the Eisenstein series for genus $\g=2$ and the Hecke operators for $\g=1$. In the scalar valued case a similar result was shown in \cite{Ga} and \cite{Boe}, called the pullback formula. We generalize the pullback formula to the vector valued case, however using a different approach. In particular, we will prove that $\partial_\h E_{m/2}^{(2)}\left(\left(\begin{smallmatrix}z&0\\0&z'\end{smallmatrix}\right)\right)$ is the sum of the kernel functions for the Hecke operators from the previous section. When $\h=0$ we get, as an additional term, the product of the genus $1$ Eisenstein series in $z$ and in $z'$ (also cf.\ \cite{St}).

\begin{defi}
	Let $\mathcal{D}$ be a discriminant form of even signature, $n\in\Z_{>0}$ and $k\in\Z$ with $k>n+1$. Then the Eisenstein series
	\begin{align*}
		E^{(\g)}_{k}(Z) \coloneqq E^{(\g)}_{k,\mathcal{D}}(Z) \coloneqq \sum_{M = \left(\begin{smallmatrix} A & B \\ C & D \end{smallmatrix}\right)\in\Gamma_\infty^{(\g)}\backslash\Gamma^{(\g)}}\det(CZ+D)^{-k}\rho_\mathcal{D}^{(\g)}(M)^{-1} e^{\underline{0}}
	\end{align*}
	converges normally and thus defines a modular form (cf.\ \cite[Theorem 1]{W2}).
\end{defi}

For a matrix $\left(\begin{smallmatrix}a&b\\c&d\end{smallmatrix}\right)\in\mathrm{Mat}_2(\Z)$, we introduce the notation $\left(\begin{smallmatrix}a&b\\c&d\end{smallmatrix}\right)' = \left(\begin{smallmatrix}d&b\\c&a\end{smallmatrix}\right)$ and $\gcd\left(\left(\begin{smallmatrix}a&b\\c&d\end{smallmatrix}\right)\right) = \gcd(a,b,c,d)$.

\begin{prp}\label{prp:VarPhi}
Let $M = \left(\begin{smallmatrix} A & B \\ C & D\end{smallmatrix}\right)\in\Gamma^{(2)}$ and denote by $C_1$, $C_2$, $D_1$ and $D_2$ the first and second column of $C$ and $D$ respectively. We define the maps
\begin{align*}
    \varphi: \Gamma^{(2)}\rightarrow\mathrm{Mat}_2(\Z),\quad \varphi(M) = \begin{pmatrix}\det(C_1,D_2) & \det(D) \\ \det(C) & \det(D_1,C_2)\end{pmatrix}
\end{align*}
and
\begin{align*}
    \nu : \Gamma^{(2)}\rightarrow\Z,\quad \nu(M) = \det(C_1,D_1).
\end{align*}
Then we have
\begin{align*}
    \varphi(M\cdot u(A)) &= \varphi(M)\cdot A, \\
    \varphi(M\cdot d(A)) &= A'\cdot\varphi(M), \\
    \nu(M\cdot u(A)) &= \nu(M), \\
    \nu(M\cdot d(A)) &= \nu(M)
\end{align*}
for $M\in\Gamma^{(2)}$ and $A \in\Gamma^{(1)}$. The map
\begin{align*}
    \phi:\Gamma_\infty^{(2)}\backslash\Gamma^{(2)}&\rightarrow\{(\delta,\n)\in\mathrm{Mat}_2(\Z)\times\Z \mid\det(\delta) = \n^2,\gcd(\delta) = 1\}/\{(I,1),(-I,-1)\}, \\
    M &\mapsto (\varphi(M),\nu(M))
\end{align*}
is bijective.
\end{prp}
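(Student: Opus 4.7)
The identities $\varphi(M\cdot u(A))=\varphi(M)\cdot A$, $\varphi(M\cdot d(A))=A'\cdot\varphi(M)$, and the invariance of $\nu$ under right-multiplication by $u(A)$ and $d(A)$ can all be verified by direct computation. Right-multiplication by $u(A)$ with $A=\left(\begin{smallmatrix}a&b\\c&d\end{smallmatrix}\right)$ replaces the columns $C_1,C_2,D_1,D_2$ of the lower $2\times 4$ block $(C|D)$ of $M$ by $aC_1+cD_1,\,C_2,\,bC_1+dD_1,\,D_2$, and right-multiplication by $d(A)$ acts analogously on the second and fourth columns. Expanding the $2\times 2$ determinants defining the entries of $\varphi$ by multilinearity reads off the claimed identities; the relation $ad-bc=1$ gives the invariance of $\nu$. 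Well-definedness of $\phi$ on $\Gamma_\infty^{(2)}\backslash\Gamma^{(2)}$ then follows from the observation that left-multiplication by $\left(\begin{smallmatrix}A'&B'\\0&D'\end{smallmatrix}\right)\in\Gamma_\infty^{(2)}$ replaces $(C|D)$ by $D'(C|D)$, hence multiplies every $2\times 2$ minor by $\det D'\in\{\pm 1\}$; this sends $(\varphi(M),\nu(M))$ to $(\det D')\cdot(\varphi(M),\nu(M))$, the same class modulo $\{(I,1),(-I,-1)\}$.

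Next I verify that the image of $\phi$ lies in the specified target set. Writing $p_{ij}$ for the $(i,j)$-minor of $(C|D)$ (columns labelled $1,2,3,4$ as $C_1,C_2,D_1,D_2$), the definitions give $\varphi(M)=\left(\begin{smallmatrix}p_{14}&p_{34}\\p_{12}&-p_{23}\end{smallmatrix}\right)$ and $\nu(M)=p_{13}$, so $\det\varphi(M)=-(p_{12}p_{34}+p_{14}p_{23})$. The Pl\"ucker identity $p_{12}p_{34}-p_{13}p_{24}+p_{14}p_{23}=0$, combined with the consequence $p_{13}+p_{24}=0$ of the symplectic relation $CD^T=DC^T$ (a short calculation shows this relation is equivalent to $\det(C_1,D_1)+\det(C_2,D_2)=0$), yields $\det\varphi(M)=-p_{13}p_{24}=p_{13}^2=\nu(M)^2$. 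For coprimality, since $M\in\mathrm{SL}_4(\Z)$ the rows of $(C|D)$ form part of a $\Z$-basis of $\Z^4$, so Smith normal form gives $e:=\gcd\{p_{ij}\}=1$. Writing $d:=\gcd\varphi(M)$, we have $d^2\mid\det\varphi(M)=p_{13}^2$, whence $d\mid p_{13}=\nu(M)$ and $d\mid p_{24}$, so $d\mid e=1$.

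For injectivity, suppose $\phi(M_1)=\phi(M_2)$ in the quotient. Recovering $p_{24}=-p_{13}$, all six Pl\"ucker coordinates of $(C_1|D_1)$ and $(C_2|D_2)$ agree up to a common sign; since both matrices are primitive, they span the same rank-$2$ direct summand of $\Z^4$ and hence differ by left-multiplication by some $X\in\mathrm{GL}_2(\Z)$. Any such $X$ is realized by $\left(\begin{smallmatrix}(X^T)^{-1}&0\\0&X\end{smallmatrix}\right)\in\Gamma_\infty^{(2)}$, so $M_1$ and $M_2$ represent the same coset.

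The main obstacle is surjectivity. Given $(\delta,n)$ with $\det\delta=n^2$ and $\gcd\delta=1$, I prescribe six candidate Pl\"ucker coordinates by placing the entries of $\delta$ into the positions $p_{14},p_{34},p_{12},-p_{23}$ and setting $p_{13}=n=-p_{24}$. The equation $\det\delta=n^2$ is precisely the Pl\"ucker identity for these six numbers, and $\gcd\delta=1$ forces their overall gcd to be $1$. Standard results on the Pl\"ucker embedding of the Grassmannian $\mathrm{Gr}(2,4)$ over $\Z$ then yield a primitive $2\times 4$ integer matrix $(C|D)$ realizing these minors. The relation $p_{13}+p_{24}=0$ is equivalent to $CD^T=DC^T$, so the rows of $(C|D)$ span a Lagrangian direct summand of $(\Z^4,J_2)$. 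The final step is completing this Lagrangian to a full symplectic basis of $\Z^4$, which I expect to be the most delicate point; I would handle it by performing symplectic Gram--Schmidt on any $\mathrm{SL}_4(\Z)$-extension of $(C|D)$, invoking the classical transitivity of $\Spn{4}$ on ordered bases of Lagrangian direct summands of the standard symplectic $\Z$-lattice.
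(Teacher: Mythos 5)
Your proof is correct in substance but follows a genuinely different route from the paper's, most visibly in the well-definedness of the gcd condition and in the surjectivity. You work throughout with the six Pl\"ucker coordinates $p_{ij}$ of the lower block $(C\,|\,D)$: the identity $\det\varphi(M)=\nu(M)^2$ drops out of the Grassmann--Pl\"ucker relation together with $p_{13}+p_{24}=0$ (equivalent to $CD^T=DC^T$), the primitivity of $\varphi(M)$ follows from $d^2\mid p_{13}^2$ plus primitivity of the full Pl\"ucker vector, and injectivity and surjectivity become statements about rank-$2$ direct summands of $\Z^4$, respectively Lagrangian ones. The paper instead argues with explicit coset representatives: it normalizes $C=\left(\begin{smallmatrix}c_1&c_2\\0&0\end{smallmatrix}\right)$ to read off $\gcd(\varphi(M))=1$ and injectivity by hand, and gets surjectivity from the single explicit matrix $\mathcal{A}_{\n}$ with $\varphi(\mathcal{A}_{\n})=\left(\begin{smallmatrix}\n^2&0\\0&1\end{smallmatrix}\right)$, $\nu(\mathcal{A}_{\n})=\n$, propagated through the double cosets via $\varphi(M\cdot d(A')u(B))=A\varphi(M)B$ and Lemma \ref{lem:DoubleCosetSet}. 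Your approach is more conceptual and explains \emph{why} the target is exactly $\{\det\delta=\n^2,\ \gcd\delta=1\}$; the paper's buys an explicit preimage $\mathcal{A}_{\n}$, which it reuses crucially in Proposition \ref{prp:ActionOne0} and Theorem \ref{thm:diffE}, so the explicitness is not wasted effort. Two caveats on your write-up: the steps you only cite or defer --- that a primitive integral solution of the Pl\"ucker relation is decomposable and realized by a basis of a rank-$2$ direct summand of $\Z^4$, and that any ordered basis of a Lagrangian direct summand of $(\Z^4,J_2)$ extends to a symplectic basis --- are both true and classical (the first via properness of the Grassmannian, or directly; the second because the form is unimodular, so one can choose a dual pair and correct it inside the Lagrangian), but they carry the real content of surjectivity and would need to be written out or precisely referenced for the proof to be complete. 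Also note that your injectivity argument implicitly uses, as the paper does explicitly, that two symplectic matrices with the same $(C,D)$ block lie in the same $\Gamma_\infty^{(2)}$-coset; that should be stated.
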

\begin{proof}
    The first relations follow from simple computations. It remains to prove that $\phi$ is a bijection. First we show that it is well-defined, i.e.\ that $\det(\varphi(M)) = \det(C_1,D_1)^2$, that $\gcd(\varphi(M)) = 1$ and that for any $\Tilde{M}\in\Gamma_\infty^{(2)}$ the matrices $\Tilde{M}M$ and $M$ have the same image under $\phi$. The last statement follows immediately from the fact that any $\Tilde{M}\in\Gamma_\infty^{(2)}$ is of the form
    \begin{align*}
        \begin{pmatrix}
        U & B \\ 0 & (U^T)^{-1}
        \end{pmatrix}
    \end{align*}
    for some $U\in\GL_2(\Z)$ and that $\det(U) = \pm1$. Now let $C=\left(\begin{smallmatrix} c_1 & c_2 \\ c_3 & c_4 \end{smallmatrix}\right)$ and $D=\left(\begin{smallmatrix} d_1 & d_2 \\ d_3 & d_4 \end{smallmatrix}\right)$. Then
    \begin{align*}
        \begin{pmatrix} a & b \\ c & d \end{pmatrix} &= \varphi(M) = \begin{pmatrix} c_1d_4-d_2c_3 & d_1d_4-d_2d_3 \\ c_1c_4-c_2c_3 & d_1c_4-c_2d_3 \end{pmatrix}
    \end{align*}
    and so 
    \begin{align*}
        \det(\varphi(M)) &= ad-bc \\
        &= (c_1d_4-d_2c_3)(d_1c_4-c_2d_3)-(d_1d_4-d_2d_3)(c_1c_4-c_2c_3) \\
        &= -c_1c_2d_3d_4 + c_1c_4d_2d_3 + c_2c_3d_1d_4-c_3c_4d_1d_2.
    \end{align*}
    Because $M$ is symplectic, we know that $CD^T = DC^T$, which is equivalent to $c_1d_3 - d_1c_3 = d_2c_4 - c_2d_4$. Therefore, we have
    \begin{align*}
        \det(C_1,D_1)^2 &= (c_1d_3 - d_1c_3)(d_2c_4 - c_2d_4) \\
        &= -c_1c_2d_3d_4 + c_1c_4d_2d_3 + c_2c_3d_1d_4-c_3c_4d_1d_2 \\
        &= \det(\varphi(M)).
    \end{align*}
    By Lemma \ref{lem:DoubleCosetSet} every $\Tilde{M}\in\mathrm{Mat}_2(\Z)$ of determinant $\n^2$ can be written as $A\left(\begin{smallmatrix} \alpha & 0 \\ 0 & \delta\end{smallmatrix}\right)B$ for $A,B\in\Gamma$ and $\alpha\delta = \n^2$ and we have
    \begin{align}\label{eq:varphidoublecoset}
        \varphi(M\cdot d(A')\cdot u(B)) = A\varphi(M)B.
    \end{align}
    We show that there exists an $M\in\Gamma^{(2)}$ such that $\varphi(M) = \left(\begin{smallmatrix} \alpha & 0 \\ 0 & \delta\end{smallmatrix}\right)$ if and only if $(\alpha,\delta) = 1$: \\
    It is well known that a right coset decomposition of the integral $2\times2$ matrices of determinant $c$ is given by
    \begin{align*}
        \bigcup_{\substack{c_1>0 \\ c_1c_4 = c \\ c_2\bmod c_4}}\Gamma\begin{pmatrix} c_1 & c_2 \\ 0 & c_4 \end{pmatrix}.
    \end{align*}
    Hence, if $\varphi(M) = \left(\begin{smallmatrix} \alpha & 0 \\ 0 & \delta\end{smallmatrix}\right)$ for $M\in\Gamma_\infty^{(2)}\backslash\Gamma^{(2)}$, we can choose the representative $M$ such that $C=\left(\begin{smallmatrix} c_1 & c_2 \\ 0 & 0\end{smallmatrix}\right)$. We then find that $c_1d_4 = \alpha$ and $-c_2d_3 = \delta$ and $c_1d_3=-c_2d_4 = \n$. Therefore, $c_1\mid \gcd(\alpha,\n)$. If $x = \gcd(\alpha,\n)/c_1$, then $x\mid d_4$ and $x\mid d_3$. But the last row of $M$ is $(0,0,d_3,d_4)$ so that $x=1$ and $c_1=\gcd(\alpha,\n)$. By an analogous argument $c_2 = \pm\gcd(\delta,\n)$. Using the Laplace expansion along the 3rd row, we find that
    \begin{align*}
        1 = \det(M) &= c_1\cdot\det(\hdots)-c_2\cdot\det(\hdots)+\det(D)\cdot\det(A) \\
        &=c_1\cdot\det(\hdots)-c_2\cdot\det(\hdots),
    \end{align*}
    so that $1 = \gcd(c_1,c_2) = \gcd(\gcd(\alpha,\n),\gcd(\delta,\n)) = \gcd(\alpha,\delta)$. Therefore, $\phi$ is well-defined. \\     
    Recall the definition of
    \[ \mathcal{A}_{\n} = \begin{pmatrix}
    	\n^2+\n & -\n-1 & -1 & -\n-1 \\
    	-\n-1    & 1   & 0 & 0   \\
    	-\n      & 1  & 0 & 0   \\
    	0      & 0   & -1 & -\n
    \end{pmatrix}\in\Gamma^{(2)}. \]
    We have
    \begin{align*}
        \varphi(\mathcal{A}_{\n}) = \begin{pmatrix} \n^2 & 0 \\ 0 & 1 \end{pmatrix}
    \end{align*}
    and $\nu(\mathcal{A}_{\n}) = \n$. Using (\ref{eq:varphidoublecoset}) and Lemma \ref{lem:DoubleCosetSet} this implies that $\phi$ is surjective. For injectivity recall that if $\varphi(M) = \left(\begin{smallmatrix} \n^2 & 0 \\ 0 & 1\end{smallmatrix}\right)$, then we can assume that $C=\left(\begin{smallmatrix} c_1 & c_2 \\ 0 & 0\end{smallmatrix}\right)$ with $c_1 = \gcd(\n^2,\n) = \n$ and $c_2=\pm\gcd(1,\n) = \pm1.$ Then $d_4=\n$, $d_3=\mp1$ and $d_2 = \mp \n d_1$. Now
    \begin{align*}
    	\begin{pmatrix} 1 & b \\ 0 & 1 \end{pmatrix}\cdot\left(C,D\right) &= \begin{pmatrix} 1 & b \\ 0 & 1 \end{pmatrix}\cdot\left(\begin{pmatrix} \n & \pm1 \\ 0 & 0 \end{pmatrix},\begin{pmatrix} d_1 & \mp \n d_1 \\ \mp 1 & \n \end{pmatrix}\right) \\
        &= \left(\begin{pmatrix} \n & \pm1 \\ 0 & 0 \end{pmatrix},\begin{pmatrix} d_1\mp b & \mp \n(d_1\mp b) \\ \mp 1 & \n \end{pmatrix}\right).
    \end{align*}
    Therefore, for every $d_1$ the corresponding matrices $M$ are in the same coset $\bmod \ \Gamma_\infty^{(2)}$. We choose a representative with $d_1 = 0$. Then $\nu(M) = \n$ if and only if $\pm = -$. So there exists exactly one pair $(C,D)\in\GL_2(\Z)\backslash(\mathrm{Mat}_2(\Z)\times\mathrm{Mat}_2(\Z))$ such that for $M=\left(\begin{smallmatrix} A & B \\ C & D \end{smallmatrix}\right)$ we have $\varphi(M) = \left(\begin{smallmatrix} \n^2 & 0 \\ 0 & 1\end{smallmatrix}\right)$ and $\nu(M) = \n$. If $M,\Tilde{M}\in\Gamma^{(2)}$ have identical $C$ and $D$, then $\Tilde{M}M^{-1}\in\Gamma_\infty^{(2)}$ and so $\phi$ is injective.
\end{proof}

For the next proposition we will need 
\begin{lem}\label{lem:APrime}
	Let $\mathcal{D}$ be a discriminant form of even signature, $\n\in\Z$, $A\in M_\n$ and $B\in\SL$. Then $(AB)' = B'A'$ and 
	\begin{align*}
		\sum_{\gamma\in \D}\rho_\D(A)^{-1}e^\gamma\otimes\rho_\D(B)^{-1}e^\gamma = \sum_{\gamma\in \D}\rho_\D(B'A)^{-1}e^\gamma\otimes e^\gamma.
	\end{align*}
\end{lem}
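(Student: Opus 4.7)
The identity $(AB)' = B'A'$ is a direct $2 \times 2$ matrix computation. Writing $A = \left(\begin{smallmatrix} a_1 & a_2 \\ a_3 & a_4 \end{smallmatrix}\right)$ and $B = \left(\begin{smallmatrix} b_1 & b_2 \\ b_3 & b_4 \end{smallmatrix}\right)$, one multiplies out both sides and checks that all four entries coincide (the key point being that the two off-diagonal entries of $AB$ are unaffected by the swap of the two diagonal entries, while the new diagonal entries of $(AB)'$ match those of $B'A'$ term by term). I would dispatch this in a single line.

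For the second identity my plan is to use the ``entangled'' element
\[ \Omega := \sum_{\gamma \in \mathcal{D}} e^\gamma \otimes e^\gamma \in \C[\mathcal{D}] \otimes \C[\mathcal{D}], \]
which satisfies the elementary intertwining relation $(I \otimes M)\Omega = (M^T \otimes I)\Omega$ for every endomorphism $M$ of $\C[\mathcal{D}]$; here $M^T$ denotes the matrix transpose in the basis $\{e^\gamma\}$. This is immediate from expanding both sides in coordinates. Applying it with $M = \rho_\mathcal{D}(B)^{-1}$ and then acting with $\rho_\mathcal{D}(A)^{-1}$ on the first tensor factor rewrites the left-hand side of the claimed identity as $\sum_\gamma \rho_\mathcal{D}(A)^{-1} (\rho_\mathcal{D}(B)^{-1})^T e^\gamma \otimes e^\gamma$. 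Using the Bruinier--Stein relation $\rho_\mathcal{D}(A \delta B)^{-1} = \rho_\mathcal{D}(B)^{-1}\rho_\mathcal{D}(\delta)^{-1}\rho_\mathcal{D}(A)^{-1}$ with $X\delta Y = B' \cdot A \cdot I$ gives $\rho_\mathcal{D}(B'A)^{-1} = \rho_\mathcal{D}(A)^{-1}\rho_\mathcal{D}(B')^{-1}$, so the lemma reduces to
\[ \rho_\mathcal{D}(B)^T = \rho_\mathcal{D}(B') \qquad \text{for all } B \in \SL. \]

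To establish this last identity I would check it on the standard generators $T = \left(\begin{smallmatrix} 1 & 1 \\ 0 & 1 \end{smallmatrix}\right)$ and $J = \left(\begin{smallmatrix} 0 & -1 \\ 1 & 0 \end{smallmatrix}\right)$ of $\SL$. Both are fixed by $(\cdot)'$, the matrix $\rho_\mathcal{D}(T)$ is diagonal with entries $e(\q(\gamma))$, and $\rho_\mathcal{D}(J)$ has entries proportional to $e((\gamma,\beta))$, which is symmetric in $\gamma, \beta$ because the bilinear form on $\mathcal{D}$ is symmetric. The identity then propagates to all of $\SL$ by the one-line induction
\[ \rho_\mathcal{D}(AB)^T = \rho_\mathcal{D}(B)^T \rho_\mathcal{D}(A)^T = \rho_\mathcal{D}(B') \rho_\mathcal{D}(A') = \rho_\mathcal{D}(B'A') = \rho_\mathcal{D}((AB)'), \]
using the anti-multiplicativity of both $(\cdot)^T$ and $(\cdot)'$, the latter being the first part of the lemma. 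I do not foresee a real obstacle; the only point worth watching is that the intertwining identity for $\Omega$ is stated in terms of the matrix transpose and not the Hermitian adjoint, so no complex conjugations enter along the way.
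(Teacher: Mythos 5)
Your proof is correct. It reaches the same two generator computations as the paper (the identity is trivial for $T=n(1)$ and follows for $J$ from the symmetry of the kernel $e((\gamma,\beta))$ of the Fourier transform), but the propagation step is organized differently. The paper keeps the full two-variable identity and proves it by induction on $B$: assuming it for $B_1$ and $B_2$, it applies $\rho_{\mathcal{D}}^{(2)}(d(B_2^{-1}))$ to both sides and invokes Proposition \ref{prp:ActionOfAAndD} to identify this with the action of $\rho_{\mathcal{D}}(B_2)^{-1}$ on the second tensor factor. You instead extract the cleaner one-variable statement $\rho_{\mathcal{D}}(B)^T=\rho_{\mathcal{D}}(B')$, deduce the lemma from it via the elementary relation $(I\otimes M)\Omega=(M^T\otimes I)\Omega$ for the entangled vector $\Omega=\sum_\gamma e^\gamma\otimes e^\gamma$ together with the Bruinier--Stein factorization $\rho_{\mathcal{D}}(B'A)^{-1}=\rho_{\mathcal{D}}(A)^{-1}\rho_{\mathcal{D}}(B')^{-1}$, and then propagate by anti-multiplicativity of the transpose. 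This buys you a proof that never leaves genus $1$ and makes explicit the structural fact ($\rho_{\mathcal{D}}(B)^T=\rho_{\mathcal{D}}(B')$) that the paper's argument uses only implicitly; the paper's version, on the other hand, reuses machinery (Proposition \ref{prp:ActionOfAAndD}) it needs anyway elsewhere. Two small points to make airtight: note that you only need the implication from $\rho_{\mathcal{D}}(B)^T=\rho_{\mathcal{D}}(B')$ to the displayed identity (you cannot cancel $\rho_{\mathcal{D}}(A)^{-1}$ on the left, since it need not be injective for general $\n$, but you never need to); and since $T^{-1}$ is not a positive word in $T$ and $J$ a priori, either observe that $J$ and $JT$ have finite order so $\SL$ is generated by $T,J$ as a monoid, or record that the identity is stable under $B\mapsto B^{-1}$ because $(B^{-1})'=(B')^{-1}$.
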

\begin{proof}
	The fact that $(A B)' = B' A'$ follows from a simple calculation. Now let $B_1,B_2\in\SL$ and assume that the identity holds whenever $B$ is equal to $B_1$ or $B_2$. Then for $A\in M_\n$ by Proposition \ref{prp:ActionOfAAndD} also
	\begin{align*}
		\sum_{\gamma\in \D}\rho_\D(A)^{-1}e^\gamma\otimes\rho_\D(B_1B_2)^{-1}e^\gamma &= \rho_\D^{(2)}(d(B_2^{-1}))\sum_{\gamma\in \D}\rho_\D(A)^{-1}e^\gamma\otimes\rho_\D(B_1)^{-1}e^\gamma \\
		&= \rho_\D^{(2)}(d(B_2^{-1}))\sum_{\gamma\in \D}\rho_\D(B_1'A)^{-1}e^\gamma\otimes e^\gamma \\
		&= \sum_{\gamma\in \D}\rho_\D(B_1'A)^{-1}e^\gamma\otimes \rho_\D(B_2)^{-1}e^\gamma \\
		&= \sum_{\gamma\in \D}\rho_\D(B_2' B_1'A)^{-1}e^\gamma\otimes e^\gamma \\
		&= \sum_{\gamma\in \D}\rho_\D((B_1 B_2)'A)^{-1}e^\gamma\otimes e^\gamma,
	\end{align*}
	so it suffices to prove the identity for $B$ equal to generators of $\SL$, i.e.\ $J_1 = \left(\begin{smallmatrix} 0 & 1 \\ -1 & 0 \end{smallmatrix}\right)$ and $n(1) = \left(\begin{smallmatrix} 1 & 1 \\ 0 & 1 \end{smallmatrix}\right)$. Note that both $J_1'=J_1$ and $n(1)' = n(1)$. For $n(1)$ the identity is trivial, while for $J_1$ we have
	\begin{align*}
		\sum_{\gamma\in \D}\rho_\D(A)^{-1}e^\gamma\otimes\rho_\D(J_1)^{-1}e^\gamma &= \sum_{\gamma\in \D}\rho_\D(A)^{-1}e^\gamma\otimes\frac{e(-\sign(\D)/8)}{|\D|}\sum_{\beta\in \D}e(-(\beta,\gamma))e^\beta \\
		&= \sum_{\beta\in \D}\rho_\D(A)^{-1}\frac{e(-\sign(\D)/8)}{|\D|}\sum_{\gamma\in \D}e(-(\beta,\gamma))e^\gamma\otimes e^\beta \\
		&= \sum_{\beta\in \D}\rho_\D(A)^{-1}\rho_\D(J_1)^{-1}e^\beta\otimes e^\beta.
	\end{align*}
\end{proof}

We can now show that in a special case, the action of an element $M\in\Gamma^{(2)}$ in the Weil representation is given in terms of the action of $\varphi(M)$.
\begin{prp}\label{prp:ActionOne0}
	Let $\D$ be a discriminant form of even signature. Let $M\in\Gamma^{(2)}$ and $\epsilon = \sgn(\nu(M))$ with $\sgn(0) = -1$. Then
	\begin{align*}
		\rho_\D^{(2)}(M)^{-1}(e^0\otimes e^0) = \frac{e(\sign(\D)/8)}{\sqrt{|\D|}}\sum_{\gamma\in \D}\rho_\D(\varphi(M))^{-1}e^{-\epsilon\gamma}\otimes e^\gamma.
	\end{align*}
\end{prp}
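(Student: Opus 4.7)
My plan is to verify the identity by reducing to a small family of canonical representatives of $\Gamma_\infty^{(2)}\backslash\Gamma^{(2)}$. By Proposition \ref{prp:VarPhi}, every coset contains, up to right multiplication by $u(B)$ and $d(A')$, a matrix $\mathcal{A}_{\n}$ with $\n\geq 0$. So the argument splits into (a) checking that the formula is compatible with the three operations $M\mapsto NM$ for $N\in\Gamma_\infty^{(2)}$, $M\mapsto M\cdot u(A)$, and $M\mapsto M\cdot d(A)$ for $A\in\SL$; and (b) directly verifying it on $M=\mathcal{A}_{\n}$.

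For (a), Proposition \ref{prp:ActionOfAAndD} gives $\rho_\D^{(2)}(u(A))^{-1}=\rho_\D(A)^{-1}\otimes\mathrm{Id}$ and $\rho_\D^{(2)}(d(A))^{-1}=\mathrm{Id}\otimes\rho_\D(A)^{-1}$. Combined with $\varphi(Mu(A))=\varphi(M)A$ and $\varphi(Md(A))=A'\varphi(M)$, the $u$-case is immediate from the factorization $\rho_\D(\varphi(M)A)^{-1}=\rho_\D(A)^{-1}\rho_\D(\varphi(M))^{-1}$, whereas the $d$-case is exactly the content of Lemma \ref{lem:APrime}---directly when $\epsilon=-1$ and, when $\epsilon=+1$, after reindexing $\gamma\mapsto -\gamma$ and absorbing a $\rho_\D(a(-I))$-factor, which is consistent because $\sign(\D)$ is even. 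For $N\in\Gamma_\infty^{(2)}$ generated by $n(S)$ and $a(U)$ one has $\rho_\D^{(2)}(N)^{-1}e^{\underline{0}}=\det(U)^{-\sign(\D)/2}e^{\underline{0}}$, while $(\varphi,\nu)$ is either fixed or simultaneously negated; the identity $\rho_\D(-\varphi(M))^{-1}e^{\epsilon\gamma}=(-1)^{\sign(\D)/2}\rho_\D(\varphi(M))^{-1}e^{-\epsilon\gamma}$ then shows the right-hand side picks up the matching scalar, again using that $\sign(\D)$ is even.

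For (b), I would unwind the factorization $\mathcal{A}_{\n}=J_2\cdot n(S_1)\cdot J_2\cdot n(S_2)\cdot J_2\cdot n(S_3)$ from Section 2 and apply $\rho_\D^{(2)}(\mathcal{A}_{\n})^{-1}$ to $e^{(0,0)}$ step by step. Each $\rho_\D^{(2)}(n(S_i))^{-1}$ acts on $e^{\underline{\gamma}}$ as multiplication by the phase $e(-\tfrac{1}{2}\tr(S_i(\underline{\gamma},\underline{\gamma})))$, while each $\rho_\D^{(2)}(J_2)^{-1}$ is the normalized Fourier transform on $\C[\D^2]$ carrying a prefactor $\frac{e(-\sign(\D)/4)}{|\D|}$. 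The entries of $S_1,S_2,S_3$ are tuned so that the three interleaved Fourier transforms produce intermediate Gauss sums which collapse by orthogonality of characters and Milgram's formula, leaving a single sum over $(\gamma_1,\gamma_2)$ subject to the relation $\gamma_1=-\n\gamma_2$; tracking the accumulated prefactors then yields $\frac{e(\sign(\D)/8)}{\sqrt{|\D|}}\sum_{\gamma_2}e^{-\n\gamma_2}\otimes e^{\gamma_2}$, which is the right-hand side for $\varphi(\mathcal{A}_{\n})=\matr{\n^2}{0}{0}{1}$ and $\epsilon=+1$. The main obstacle is precisely this last step: the cancellation of the multiple Gauss sums is delicate and depends critically on the tuned numerical entries of $S_1,S_2,S_3$, while keeping careful bookkeeping of the various $\sign(\D)$-dependent roots of unity through the three Fourier transforms requires care.
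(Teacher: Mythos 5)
Your proposal follows essentially the same route as the paper: reduce via Proposition \ref{prp:VarPhi} to representatives $\mathcal{A}_{\n}u(A)d(B)$, handle the $u$- and $d$-translations with Proposition \ref{prp:ActionOfAAndD} and Lemma \ref{lem:APrime} and the $\Gamma_\infty^{(2)}$-action by its scalar effect on $e^0\otimes e^0$, and then verify the base case $M=\mathcal{A}_{\n}$ by unwinding the word in $J_2$ and $n(S_i)$ with orthogonality of characters and Milgram's formula. The one step you flag as delicate is exactly the computation the paper carries out explicitly, and your description of its outcome (collapse to $\gamma_1=-\n\gamma_2$ with prefactor $e(\sign(\D)/8)/\sqrt{|\D|}$) matches it, so the plan is sound.
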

\begin{proof}
	We first show the statement for $M=\mathcal{A}_\n = Jn_1Jn_2Jn_3$ with
	\begin{align*}
		n_1 &= n\left(\begin{pmatrix} 0 & -1 \\ -1 & -\n \end{pmatrix}\right), \ n_2 = n\left(\begin{pmatrix} \n^2+\n & -\n-1 \\ -\n-1 & 1 \end{pmatrix}\right), \ n_3 = n\left(\begin{pmatrix} 0 & 0 \\ 0 & 1 \end{pmatrix}\right).
	\end{align*}
	Then we find
	\begin{align*}
		\rho_\D^{(2)}(\mathcal{A}_\n)^{-1}(e^0\otimes e^0) &= \rho_\D^{(2)}(n_1Jn_2Jn_3)^{-1}\frac{e(-\sign(\D)/4)}{|\D|}\sum_{(\mu_1,\mu_2)\in \D^2}e^{\mu_1}\otimes e^{\mu_2} \\
		&= \rho_\D^{(2)}(Jn_2Jn_3)^{-1}\frac{e(-\sign(\D)/4)}{|\D|}\sum_{(\mu_1,\mu_2)\in \D^2}e((\mu_1,\mu_2) + \n\q(\mu_2))e^{\mu_1}\otimes e^{\mu_2} \\
		&= \rho_\D^{(2)}(n_2Jn_3)^{-1}\frac{e(-\sign(\D)/2)}{|\D|^2}\sum_{(\beta_1,\beta_2)\in \D^2}\sum_{(\mu_1,\mu_2)\in \D^2}e(-(\mu_1,\beta_1)-(\mu_2,\beta_2)) \\
		&\qquad\qquad e((\mu_1,\mu_2) + \n\q(\mu_2))e^{\beta_1}\otimes e^{\beta_2} \\
		&= \rho_\D^{(2)}(n_2Jn_3)^{-1}\frac{e(-\sign(\D)/2)}{|\D|}\sum_{(\beta_1,\beta_2)\in \D^2}e(-(\beta_1,\beta_2)+\n\q(\beta_1))e^{\beta_1}\otimes e^{\beta_2},
	\end{align*}
	where we used that
	\begin{align*}
		\sum_{\mu_1\in \D}e((\mu_1,\mu_2-\beta_1) = \begin{cases}
			|\D| & \text{if $\mu_2 = \beta_1$} \\
			0 & \text{otherwise}.
		\end{cases}
	\end{align*}
	We proceed% by applying $\rho_\D^{(2)}(n_2)^{-1}$ and
	\begin{align*}
		\rho_\D^{(2)}(Jn_3)^{-1}&\frac{e(-\sign(\D)/2)}{|\D|}\sum_{(\beta_1,\beta_2)\in \D^2}e(-(\n^2+\n)\q(\beta_1)+(\n+1)(\beta_1,\beta_2)-\q(\beta_2)) \\
		&\qquad\qquad e(-(\beta_1,\beta_2)+\n\q(\beta_1))e^{\beta_1}\otimes e^{\beta_2} \\
		&= \rho_\D^{(2)}(n_3)^{-1}\frac{e(-3\sign(\D)/4)}{|\D|^2}\sum_{(\gamma_1,\gamma_2)\in \D^2}\sum_{(\beta_1,\beta_2)\in \D^2}e(-(\beta_1,\gamma_1)-(\beta_2,\gamma_2)) \\
		&\qquad\qquad e(-\q(\n\beta_1)+\n(\beta_1,\beta_2)-\q(\beta_2))e^{\gamma_1}\otimes e^{\gamma_2} \\
		&= \frac{e(-3\sign(\D)/4)}{|\D|^2}\sum_{(\gamma_1,\gamma_2)\in \D^2}\sum_{(\beta_1,\beta_2)\in \D^2}e(-\q(\beta_2-\n\beta_1+\gamma_2)) \\
		&\qquad\qquad e(-(\gamma_1+\n\gamma_2,\beta_1))e^{\gamma_1}\otimes e^{\gamma_2}.
	\end{align*}
	Taking the sum over $\beta_2$ and using Milgram's formula we get
	\begin{align*}
		\frac{e(\sign(\D)/8)}{|\D|^{\frac{3}{2}}}&\sum_{(\gamma_1,\gamma_2)\in \D^2}\sum_{\beta_1\in \D}e(-(\gamma_1+\n\gamma_2,\beta_1))e^{\gamma_1}\otimes e^{\gamma_2} \\
		&= \frac{e(\sign(\D)/8)}{\sqrt{|\D|}}\sum_{\gamma_2\in \D}e^{-\n\gamma_2}\otimes e^{\gamma_2} \\
		&= \frac{e(\sign(\D)/8)}{\sqrt{|\D|}}\sum_{\gamma_2\in \D}\rho_\D\left(\begin{pmatrix} \n^2 & 0 \\ 0 & 1 \end{pmatrix}\right)^{-1}e^{-\epsilon\gamma_2}\otimes e^{\gamma_2}.
	\end{align*}
	(Note that $\rho_\D\left(\left(\begin{smallmatrix} \n^2 & 0 \\ 0 & 1 \end{smallmatrix}\right)\right)^{-1}e^\gamma = e^{|\n|\gamma}$ by definition.) \\
	Now let $M\in\Gamma^{(2)}$ be arbitrary. By Proposition \ref{prp:VarPhi} and Lemma \ref{lem:DoubleCosetSet} we have
	\[ M \in \Gamma_\infty^{(2)}\cdot\mathcal{A}_{\n}u(A)d(B) \]
	for some $\n\in\Z$ and $A,B\in\Gamma^{(1)}$. Since $\Tilde{M} = n(S)a(U)\in\Gamma_\infty^{(2)}$ acts on $e^0\otimes e^0$ as multiplication by $\det(U)^{\sign(\D)/2}$ and $\varphi(\Tilde{M}M) = \det(U)\varphi(M)$ and $\nu(\Tilde{M}M) = \det(U)\nu(M)$, we can assume that $M$ is equal to $\mathcal{A}_{\n}u(A)d(B)$. By Proposition \ref{prp:ActionOfAAndD} and Lemma \ref{lem:APrime} we have
	\begin{align*}
		\rho_\D^{(2)}(\mathcal{A}_\n u(A)d(B))^{-1}&(e^0\otimes e^0) \\
		&= \rho_\D^{(2)}(u(A)\cdot d(B))^{-1}\rho_\D^{(2)}(\mathcal{A}_\n)^{-1}(e^0\otimes e^0) \\
		&= \frac{e(\sign(\D)/8)}{\sqrt{|\D|}}\sum_{\gamma\in \D}\rho_\D(A)^{-1}\rho_\D(\varphi(\mathcal{A}_\n))^{-1}e^{-\epsilon\gamma}\otimes\rho_\D(B)^{-1}e^\gamma \\
		&= \frac{e(\sign(\D)/8)}{\sqrt{|\D|}}\sum_{\gamma\in \D}\rho_\D(B'\varphi(\mathcal{A}_\n)A)^{-1}e^{-\epsilon\gamma}\otimes e^\gamma \\
		&= \frac{e(\sign(\D)/8)}{\sqrt{|\D|}}\sum_{\gamma\in \D}\rho_\D(\varphi(\mathcal{A}_\n u(A)d(B)))^{-1}e^{-\epsilon\gamma}\otimes e^\gamma.
	\end{align*}
\end{proof}
Recall that $M_\n = \left\{\matr{a}{b}{c}{d}\in\mathrm{Mat}_2(\Z) \mid ad-bc=\n^2, \gcd(a,b,c,d) = 1\right\}$. For the case $\n=0$ we will need
\begin{lem}\label{lem:PsiFunction}
	We define a function $\psi:\Gamma^{(1)}\times\Gamma^{(1)}\rightarrow M_0$ by
	\begin{align*}
		\left(\begin{pmatrix} * & * \\ r & s \end{pmatrix},\begin{pmatrix} * & * \\ t & u \end{pmatrix}\right) \mapsto \begin{pmatrix} ru & su \\ rt & st \end{pmatrix}.
	\end{align*}
	Then $\psi$ defines a bijection between  $\Gamma_\infty^{(1)}\backslash\Gamma^{(1)}\times\Gamma_\infty^{(1)}\backslash\Gamma^{(1)}$ and $M_0/\{\pm1\}$. Let $\mathcal{D}$ be a discriminant form of even signature, then we have
	\begin{align*}
		\rho_\D(A)^{-1}e^0\otimes\rho_\D(B)^{-1}e^0 &= \frac{e(\sign(\D)/8)}{\sqrt{|\D|}}\sum_{\gamma\in \D}\rho_\D(\psi(A,B))^{-1}e^\gamma\otimes e^\gamma.
	\end{align*}
\end{lem}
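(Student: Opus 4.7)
The lemma contains a bijection claim and an identity, which I address in turn.

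\emph{Bijection.} That $\psi(A,B)\in M_0$ is immediate: the determinant is $ru\cdot st - su\cdot rt=0$, and since $\gcd(r,s)=\gcd(t,u)=1$ we get $\gcd(ru,su,rt,st)=\gcd(u,t)=1$. The group $\Gamma_\infty^{(1)}$ is generated by $n(k)$ (which leaves the bottom row of any matrix unchanged) and by $-I$ (which flips it), so left multiplication by an element of $\Gamma_\infty^{(1)}\times\Gamma_\infty^{(1)}$ scales the bottom rows $(r,s)$ and $(t,u)$ independently by $\pm 1$; hence $\psi(\gamma_1 A,\gamma_2 B)=\pm\psi(A,B)$ and $\psi$ descends to a well-defined map to $M_0/\{\pm 1\}$. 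To invert it, note that any $M\in M_0$ has proportional integer rows; their common direction is a primitive vector $(r,s)\in\Z^2$ unique up to sign, and writing the rows as $u\cdot(r,s)$ and $t\cdot(r,s)$ produces integers $t,u$ with $\gcd(t,u)=\gcd(M)=1$, determined by the choice of sign of $(r,s)$. The resulting pair recovers the cosets of $A$ and $B$ in $\Gamma_\infty^{(1)}\backslash\Gamma^{(1)}$, and a short check tracking the sign ambiguities shows that $\psi$ is indeed a bijection on cosets.

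\emph{Identity.} By two applications of Proposition \ref{prp:ActionOfAAndD},
\[
\rho_\D(A)^{-1}e^0\otimes\rho_\D(B)^{-1}e^0 \;=\; \rho_\D^{(2)}\bigl(u(A)\,d(B)\bigr)^{-1}(e^0\otimes e^0).
\]
A direct block computation with $A=\matr{a}{b}{r}{s}$ and $B=\matr{a'}{b'}{t}{u}$ shows that the lower blocks of $u(A)\,d(B)$ are $C=\matr{r}{0}{0}{t}$ and $D=\matr{s}{0}{0}{u}$. The formulas in Proposition \ref{prp:VarPhi} then give $\varphi(u(A)\,d(B))=\matr{ru}{su}{rt}{st}=\psi(A,B)$ and $\nu(u(A)\,d(B))=\det(C_1,D_1)=0$. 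Since $\nu=0$ one has $\epsilon=\sgn(0)=-1$ and $-\epsilon\gamma=\gamma$, so Proposition \ref{prp:ActionOne0} produces exactly the claimed formula.

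The main obstacle is conceptual rather than computational: the expression $\rho_\D(\psi(A,B))^{-1}$ on the right-hand side involves the Weil-representation action evaluated on a matrix of determinant zero, which lies outside the range where the extended action $\rho_\D(\delta)^{-1}$ has been defined via the double-coset decomposition for $M_\n$ with $\n>0$. There is no genuine difficulty, however: Proposition \ref{prp:ActionOne0} was formulated to cover the case $\nu(M)=0$ (with $\epsilon=-1$) and thus implicitly provides---and justifies---the extension to $M_0$ that this lemma records.
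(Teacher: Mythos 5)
Your proof is correct, and the second half takes a genuinely shorter route than the paper. For the identity, the paper does not invoke Proposition \ref{prp:ActionOne0} at all: it verifies the formula by hand for the base pair $(A,B)=(I,-J_1)$, where $\psi(I,-J_1)=\matr{0}{0}{0}{1}$, and then propagates to arbitrary $(A,B)$ via Proposition \ref{prp:ActionOfAAndD} and Lemma \ref{lem:APrime}, using $B'J_1\psi(I,-J_1)A=\psi(A,B)$. You instead observe that $u(A)d(B)$ has lower blocks $C=\matr{r}{0}{0}{t}$ and $D=\matr{s}{0}{0}{u}$, hence $\varphi(u(A)d(B))=\psi(A,B)$ and $\nu(u(A)d(B))=0$, and read the identity off directly from Proposition \ref{prp:ActionOne0}; this is clean, reuses existing machinery, and exposes the lemma as the $\nu=0$ special case of that proposition (no circularity, since Proposition \ref{prp:ActionOne0} is proved independently). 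For the bijection, the paper uses the equivariance $\psi(A\cdot C,B\cdot D)=D'\psi(A,B)C$ together with Lemma \ref{lem:DoubleCosetSet} for surjectivity and a short matrix computation for injectivity, while you argue structurally from the factorization of a rank-one integer matrix with coprime entries as an outer product of a primitive vector $(r,s)$ and a coprime pair $(t,u)$; both are sound, though your ``short check tracking the sign ambiguities'' deserves one explicit sentence (the four sign choices for $((r,s),(t,u))$ map onto $\{M,-M\}$ and are exactly absorbed by $\Gamma_\infty^{(1)}\times\Gamma_\infty^{(1)}$, since $-I\in\Gamma_\infty^{(1)}$). One small correction to your closing remark: the action $\rho_\D(\delta)^{-1}$ for $\delta\in M_0$ is already within the paper's definition, which allows $\alpha=\matr{\n^2}{0}{0}{1}$ for any \emph{non-negative} integer $\n$, including $\n=0$; so no extension is being smuggled in via Proposition \ref{prp:ActionOne0}, although you are right that this lemma is where the $\n=0$ case of that definition is actually exercised.
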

\begin{proof}
	It is easy to see that $\psi$ is well-defined as a mapping from $\Gamma_\infty^{(1)}\backslash\Gamma^{(1)}\times\Gamma_\infty^{(1)}\backslash\Gamma^{(1)}$ to $M_0/\{\pm1\}$. A simple computation shows that for $A,B,C,D\in\Gamma$ we have
	\begin{align*}
		\psi(A\cdot C,B\cdot D) = D'\cdot\psi(A,B)\cdot C.
	\end{align*}
	Noting that $\psi(I,-J_1) = \matr{0}{0}{0}{1}$, surjectivity follows from Lemma \ref{lem:DoubleCosetSet}. For injectivity assume that $\psi(A,B) = \psi(C,D)$ and so
	\[ \psi(AC^{-1},BD^{-1}) = \psi(I,I) = \begin{pmatrix} 0 & 1 \\ 0 & 0 \end{pmatrix}. \]
	We need to show that $AC^{-1},BD^{-1}\in\Gamma_\infty$: \\
	Suppose that
	\begin{align*}
		AC^{-1} = \begin{pmatrix} * & * \\ r & s \end{pmatrix},\quad BD^{-1} = \begin{pmatrix} * & * \\ t & u \end{pmatrix}.
	\end{align*}
	Then $ru = 0$ and $su = 1$, so that we must have $r=0$. Because $st=0$ and $su=1$, we also have $t = 0$ and $s=u=\pm1$, i.e.\ $AC^{-1},BD^{-1}\in\Gamma_\infty$. \\	
	Now we find that 
	\begin{align*}
		\rho_\D(I)^{-1}e^0\otimes\rho_\D(-J_1)^{-1}e^0 &= \frac{e(\sign(\D)/8)}{\sqrt{|\D|}}\sum_{\gamma\in \D}e^0\otimes e^\gamma \\
		&= \frac{e(\sign(\D)/8)}{\sqrt{|\D|}}\sum_{\gamma\in \D}\rho_\D\left(\begin{pmatrix} 0 & 0 \\ 0 & 1 \end{pmatrix}\right)^{-1}e^\gamma\otimes e^\gamma.
	\end{align*}
	For arbitrary $A,B\in\Gamma$ we once again use Proposition \ref{prp:ActionOfAAndD} and Lemma \ref{lem:APrime} to get
	\begin{align*}
		\rho_\D(A)^{-1}e^0\otimes\rho_\D(B)^{-1}e^0 &= \rho_\D^{(2)}(u(A)d(J_1B))^{-1}(\rho_\D(I)^{-1}e^0\otimes\rho_\D(-J_1)^{-1}e^0) \\
		&= \rho_\D^{(2)}(u(A)d(J_1B))^{-1}\left(\frac{e(\sign(\D)/8)}{\sqrt{|\D|}}\sum_{\gamma\in \D}\rho_\D(\psi(I,-J_1))^{-1}e^\gamma\otimes e^\gamma\right) \\
		&= \frac{e(\sign(\D)/8)}{\sqrt{|\D|}}\sum_{\gamma\in \D}\rho_\D(A)^{-1}\rho_\D(\psi(I,-J_1))^{-1}e^\gamma\otimes\rho_\D(J_1B)^{-1}e^\gamma \\
		&= \frac{e(\sign(\D)/8)}{\sqrt{|\D|}}\sum_{\gamma\in \D}\rho_\D(B'J_1\psi(I,-J_1)A)^{-1}e^\gamma\otimes e^\gamma \\
		&= \frac{e(\sign(\D)/8)}{\sqrt{|\D|}}\sum_{\gamma\in \D}\rho_\D(\psi(A,B))^{-1}e^\gamma\otimes e^\gamma.
	\end{align*}
\end{proof}

The next result finally describes the relation between $E_k^{(2)}$ and the Hecke operators and is due to Stein (cf.\ \cite[Theorem 5.3]{St}). We include it together with its proof because we will generalize the argument to establish Theorem \ref{thm:diffE}.
\begin{thm}\label{thm:EIsHeckeKernel}
Let $\mathcal{D}$ be a discriminant form of even signature and let $k>3$ with $k=\sign(\mathcal{D})/2\bmod4$. Then we have
\begin{align*}
    E^{(2)}_{k}\left(\matr{z}{0}{0}{z'}\right) &= E^{(1)}_{k}(z)\otimes E^{(1)}_{k}(z') + \frac{e(\sign(\mathcal{D})/8)}{\sqrt{|\mathcal{D}|}}\sum_{\n\in\Z_{>0}}\omega_\n(z,z').
\end{align*}
\end{thm}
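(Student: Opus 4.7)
The plan is to expand $E^{(2)}_k\matr{z}{0}{0}{z'}$ via its definition as a sum over $[M] \in \Gamma_\infty^{(2)}\backslash\Gamma^{(2)}$ and reindex using the bijection $\phi \colon [M] \mapsto (\varphi(M),\nu(M))$ of Proposition~\ref{prp:VarPhi}. A short block computation shows that for $Z = \matr{z}{0}{0}{z'}$ and $\varphi(M) = \matr{a}{b}{c}{d}$,
\[ \det(CZ+D) = czz' + az + dz' + b, \]
which is exactly the denominator appearing in the definition of $\omega_\n$. The Eisenstein sum then naturally splits according to whether $\nu(M) = 0$ or $\nu(M) \neq 0$, and these two parts will produce the two terms on the right-hand side.

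For the $\nu(M) = 0$ part, Proposition~\ref{prp:ActionOne0} applied with $\epsilon = \sgn(0) = -1$ gives
\[ \rho_\D^{(2)}(M)^{-1}(e^0 \otimes e^0) = \frac{e(\sign(\D)/8)}{\sqrt{|\D|}}\sum_{\gamma\in\D}\rho_\D(\varphi(M))^{-1}e^\gamma \otimes e^\gamma. \]
Since $\det\varphi(M) = \nu(M)^2 = 0$, so $\varphi(M) \in M_0$, Lemma~\ref{lem:PsiFunction} provides a pair $(A,B) \in (\Gamma_\infty\backslash\Gamma)^2$ (uniquely modulo the $\pm$ identification) with $\varphi(M) = \psi(A,B)$, and its explicit formula converts the right-hand side above into $\rho_\D(A)^{-1}e^0 \otimes \rho_\D(B)^{-1}e^0$. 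Writing $A = \matr{*}{*}{r}{s}$ and $B = \matr{*}{*}{t}{u}$, the denominator factors as $(rz+s)(tz'+u)$, and summing over all pairs gives exactly $E^{(1)}_k(z) \otimes E^{(1)}_k(z')$.

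For the $\nu(M) \neq 0$ part, I pick the unique coset representative with $\nu(M) > 0$, so $\epsilon = 1$. With $\delta = \varphi(M) \in M_\n$ and $\n = \nu(M)$, Proposition~\ref{prp:ActionOne0} yields the summand $(czz'+az+dz'+b)^{-k}\rho_\D(\delta)^{-1}e^{-\gamma}\otimes e^\gamma$, which differs from the $\omega_\n$-summand $(czz'+az+dz'+b)^{-k}\rho_\D(\delta)^{-1}e^\gamma\otimes e^\gamma$ in the first tensor factor. Reconciling this sign is the technical heart of the proof: I observe that the partial sum $S(v) \coloneqq \sum_{\delta\in M_\n}(czz'+az+dz'+b)^{-k}\rho_\D(\delta)^{-1}v$ satisfies $S(v) = (-1)^{-k}e(-\sign(\D)/4)\,S(\iota v)$, where $\iota\colon e^\gamma \mapsto e^{-\gamma}$. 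This follows from substituting $\delta \mapsto -\delta$ (which permutes $M_\n$) and using $\rho_\D(-\delta)^{-1} = e(-\sign(\D)/4)\rho_\D(\delta)^{-1}\iota$. The hypothesis $k \equiv \sign(\D)/2 \pmod{4}$ (of which only the weaker congruence mod $2$ is actually needed here) forces $(-1)^{-k}e(-\sign(\D)/4) = 1$, so $S(e^{-\gamma}) = S(e^\gamma)$, and the $\nu\neq 0$ contribution collapses to $\tfrac{e(\sign(\D)/8)}{\sqrt{|\D|}}\sum_{\n > 0}\omega_\n(z,z')$. A secondary bookkeeping subtlety is ensuring that the $\{(I,1),(-I,-1)\}$ identification of Proposition~\ref{prp:VarPhi} meshes with the $\Gamma_\infty^{(2)}$-invariance of the summand so that each coset is counted exactly once on both sides of the identity.
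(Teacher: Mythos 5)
Your proposal is correct and follows essentially the same route as the paper: expand $E^{(2)}_k$ over $\Gamma_\infty^{(2)}\backslash\Gamma^{(2)}$, reindex via the bijection of Proposition \ref{prp:VarPhi}, apply Proposition \ref{prp:ActionOne0} (with Lemma \ref{lem:PsiFunction} for the $\nu=0$ cosets), and reconcile the $e^{-\gamma}$ versus $e^{\gamma}$ discrepancy through the substitution $\delta\mapsto-\delta$ and the identity $(-1)^k e(\sign(\mathcal{D})/4)=1$. The only cosmetic difference is that you package this last step as a symmetry of the partial sums $S(v)$, whereas the paper verifies the same sign identity summand by summand; your parenthetical that only the congruence mod $2$ is used at that particular step is also accurate.
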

\begin{proof}
    Let $M=\left(\begin{smallmatrix} A & B \\ C & D \end{smallmatrix}\right)\in\Gamma^{(2)}$ and consider $C=\left(\begin{smallmatrix} c_1 & c_2 \\ c_3 & c_4 \end{smallmatrix}\right)$ and $D=\left(\begin{smallmatrix} d_1 & d_2 \\ d_3 & d_4 \end{smallmatrix}\right)$. Then for $Z = \left(\begin{smallmatrix} z & 0 \\ 0 & z' \end{smallmatrix}\right)$ 
    \begin{align*}
        \det(CZ+D) &= \det\left(\begin{pmatrix} c_1z+d_1 & c_2z'+d_2 \\ c_3z+d_3 & c_4z'+d_4 \end{pmatrix}\right) \\
        &= (c_1z+d_1)(c_4z'+d_4) - (c_2z'+d_2)(c_3z+d_3) \\
        &= (c_1c_4 - c_2c_3)zz' + (c_1d_4 - d_2c_3)z + (d_1c_4 - c_2d_3)z' + (d_1d_4-d_2d_3) \\
        &= czz'+az+dz'+b,
    \end{align*}
    where $\left(\begin{smallmatrix} a & b \\ c & d \end{smallmatrix}\right) = \varphi(M)$. Hence, $E^{(2)}_{k}\left(\left(\begin{smallmatrix} z & 0 \\ 0 & z' \end{smallmatrix}\right)\right)$ is equal to
    \begin{align*}
    	 \sumstack{M\in\Gamma_\infty^{(2)}\backslash\Gamma^{(2)} \\ \varphi(M)=\left(\begin{smallmatrix} a & b \\ c & d \end{smallmatrix}\right)}&\frac{1}{(czz'+az+dz'+b)^k}\rho_\mathcal{D}^{(2)}(M)^{-1}(e^0\otimes e^0) \\
    	&= \frac{e(\sign(\mathcal{D})/8)}{\sqrt{|\mathcal{D}|}}\sumstack{M\in\Gamma_\infty^{(2)}\backslash\Gamma^{(2)} \\ \varphi(M)=\left(\begin{smallmatrix} a & b \\ c & d \end{smallmatrix}\right)}\frac{1}{(czz'+az+dz'+b)^k}\sum_{\gamma\in \mathcal{D}}\rho_\mathcal{D}(\varphi(M))^{-1}e^{-\sgn(\nu(M))\gamma}\otimes e^\gamma \\
    	&= \frac{e(\sign(\mathcal{D})/8)}{\sqrt{|\mathcal{D}|}}\sum_{\n\in\Z_{>0}}\sumstack{M=\left(\begin{smallmatrix} a & b \\ c & d \end{smallmatrix}\right)\in M_\n}\frac{1}{(czz'+az+dz'+b)^k}\sum_{\gamma\in \mathcal{D}}\rho_\mathcal{D}(M)^{-1}e^\gamma\otimes e^{\gamma} \\
    	&\quad +\frac{e(\sign(\mathcal{D})/8)}{\sqrt{|\mathcal{D}|}}\sumstack{M=\left(\begin{smallmatrix} a & b \\ c & d \end{smallmatrix}\right)\in M_0/\{\pm1\}}\frac{1}{(czz'+az+dz'+b)^k}\sum_{\gamma\in \mathcal{D}}\rho_\mathcal{D}(M)^{-1}e^\gamma\otimes e^{\gamma},
    \end{align*}
    where we applied Propositions \ref{prp:VarPhi} and \ref{prp:ActionOne0} as well as the fact that
    \begin{align*}
        \frac{1}{(-czz'-az-dz'-b)^k}&\sum_{\gamma\in \mathcal{D}}\rho_\mathcal{D}(-\varphi(M))^{-1}e^{-\gamma}\otimes e^{\gamma} \\
        &= \frac{(-1)^k}{(czz'+az+dz'+b)^k}\sum_{\gamma\in \mathcal{D}}\rho_\mathcal{D}(\varphi(M))^{-1}\rho_\mathcal{D}(-I)e^{-\gamma}\otimes e^{\gamma} \\
        &= \frac{(-1)^ke(\sign(\mathcal{D})/4)}{(czz'+az+dz'+b)^k}\sum_{\gamma\in \mathcal{D}}\rho_\mathcal{D}(\varphi(M))^{-1}e^{\gamma}\otimes e^{\gamma}
    \end{align*}
    and $e(\sign(\mathcal{D})/4) = i^{\sign(\mathcal{D})} = i^{2k} = (-1)^k$. So it remains to show that the term for $\n=0$ is equal to $E^{(1)}_{k}(z)\otimes E^{(1)}_{k}(z')$. In fact, by Lemma \ref{lem:PsiFunction}, we have
    \begin{align*}
        &\frac{e(\sign(\mathcal{D})/8)}{\sqrt{|\mathcal{D}|}}\sumstack{M\in M_0/\{\pm1\}}\frac{1}{(czz'+az+dz'+b)^k}\sum_{\gamma\in \mathcal{D}}\rho_\mathcal{D}(M)^{-1}e^\gamma\otimes e^\gamma \\
        &= \frac{e(\sign(\mathcal{D})/8)}{\sqrt{|\mathcal{D}|}}\sumstack{A,B\in\Gamma_\infty^{(1)}\backslash\Gamma^{(1)} \\ \psi(A,B)=\left(\begin{smallmatrix} a & b \\ c & d \end{smallmatrix}\right)}\frac{1}{(czz'+az+dz'+b)^k}\sum_{\gamma\in \mathcal{D}}\rho_\mathcal{D}(\psi(A,B))^{-1}e^\gamma\otimes e^\gamma \\
        &= \sumstack{A,B\in\Gamma_\infty^{(1)}\backslash\Gamma^{(1)} \\ \psi(A,B)=\left(\begin{smallmatrix} a & b \\ c & d \end{smallmatrix}\right)}\frac{1}{(czz'+az+dz'+b)^k}\rho_\mathcal{D}(A)^{-1}e^0\otimes\rho_\mathcal{D}(B)^{-1}e^0 \\
        &= \left(\sum_{A=\left(\begin{smallmatrix} * & * \\ r & s \end{smallmatrix}\right)\in\Gamma_\infty^{(1)}\backslash\Gamma^{(1)}}\frac{1}{(rz+s)^k}\rho_\mathcal{D}(A)^{-1}e^0\right)\otimes\left(\sum_{B=\left(\begin{smallmatrix} * & * \\ t & u \end{smallmatrix}\right)\in\Gamma_\infty^{(1)}\backslash\Gamma^{(1)}}\frac{1}{(tz'+u)^k}\rho_\mathcal{D}(B)^{-1}e^0\right) \\
        &= E_k^{(1)}(z)\otimes E_k^{(1)}(z').
    \end{align*}
\end{proof}

By first applying $\partial_\h$ to the Eisenstein series we obtain

\begin{thm}\label{thm:diffE}
	Let $\mathcal{D}$ be a discriminant form of even signature, $m>6$ with $m=\sign(D)\bmod8$ and $k=m/2+\h$ with $\h\geq0$. Then
	\begin{align*}
		(\partial_{\h} E_{m/2}^{(2)})(z,z') = G_m^\h(1,1)\cdot\frac{(k-1)!}{(m/2-1)!}\cdot\frac{e(\sign(\mathcal{D})/8)}{\sqrt{|\mathcal{D}|}}\sum_{\n\in\Z_{>0}}\n^\h\omega_\n(z,z').
	\end{align*}
\end{thm}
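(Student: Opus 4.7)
The plan is to mimic the proof of Theorem~\ref{thm:EIsHeckeKernel}. The only new input is computing $\partial_\h\det(CZ+D)^{-m/2}$ restricted to $z_2=0$, rather than just restricting $\det(CZ+D)^{-m/2}$. First I would write out, for $M=\left(\begin{smallmatrix}A&B\\C&D\end{smallmatrix}\right)\in\Gamma^{(2)}$ and $Z=\left(\begin{smallmatrix}z_1&z_2\\z_2&z_4\end{smallmatrix}\right)\in\mathbb{H}_2$, the polynomial expansion
\[
\det(CZ+D)=P(z_1,z_4)-2\nu(M)\,z_2-c\,z_2^2,\qquad P=cz_1z_4+az_1+dz_4+b,
\]
where $(a,b,c,d)=\varphi(M)$. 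The coefficient of $z_2$ is $-2\nu(M)$ thanks to the symplectic relation $CD^T=DC^T$, and the $z_2^2$--coefficient is $-c$; at $z_2=0$ this recovers $P$, as in the proof of Theorem~\ref{thm:EIsHeckeKernel}.

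The heart of the proof is the operator identity
\[
G_m^\h\!\left(\tfrac12\partial_{z_2},\partial_{z_1}\partial_{z_4}\right)F^{-m/2}
=G_m^\h(1,1)\,(m/2)_\h\,(cz_2+\nu)^\h\,F^{-m/2-\h},
\]
for $F=P-2\nu z_2-cz_2^2$, which I would prove by induction on $\h$ using the Gegenbauer-type recurrence
\[
(\h+1)G_m^{\h+1}(s,n)=(2\h+m-2)\,s\,G_m^\h(s,n)-(\h+m-3)\,n\,G_m^{\h-1}(s,n),
\]
obtained by differentiating the generating function $(1-2sX+nX^2)^{1-m/2}$ in $X$. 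In the inductive step, applying $\tfrac12\partial_{z_2}$ and $\partial_{z_1}\partial_{z_4}$ to the ansatz produces two types of terms: a main contribution proportional to $(cz_2+\nu)^{\h+1}F^{-m/2-\h-1}$, and a cross term proportional to $c\,(cz_2+\nu)^{\h-1}F^{-m/2-\h}$. The crucial algebraic input is the quadratic factorization
\[
(cz_1+d)(cz_4+a)=cF+(cz_2+\nu)^2,
\]
a direct consequence of $ad-bc=\nu(M)^2$. Under this substitution, the main-term coefficient recovers the Gegenbauer recurrence at $s=n=1$, while the cross-term coefficient vanishes because of the scalar identity
$\tfrac{\h(2\h+m-2)}{2}G_m^\h(1,1)=(\h+m-3)(m/2+\h-1)G_m^{\h-1}(1,1)$, which is elementary from the closed form $G_m^\h(1,1)=\binom{m+\h-3}{\h}$. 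This closes the induction for all values of $c$, including $c=0$.

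Setting $z_2=0$ then yields
\[
\partial_\h\det(CZ+D)^{-m/2}\big|_{z_2=0}=G_m^\h(1,1)\,(m/2)_\h\,\nu(M)^\h\,P^{-k},\qquad k=m/2+\h,
\]
with $(m/2)_\h=(k-1)!/(m/2-1)!$. Substituting this into $\partial_\h E^{(2)}_{m/2}$, the rest of the argument from the proof of Theorem~\ref{thm:EIsHeckeKernel} carries over verbatim: parametrize cosets via the bijection of Proposition~\ref{prp:VarPhi}, express the Weil-representation action by Proposition~\ref{prp:ActionOne0}, and use the relabeling $\delta\mapsto-\delta$ together with $e(\sign(\mathcal{D})/4)=(-1)^k$ (valid since $m\equiv\sign(\mathcal{D})\bmod 8$) to match up vectors. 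For $\h>0$ the extra factor $\nu(M)^\h$ suppresses every coset with $\nu(M)=0$, so the product $E^{(1)}_k(z)\otimes E^{(1)}_k(z')$ that appears in Theorem~\ref{thm:EIsHeckeKernel} drops out, and the sum over $\n>0$ and $\delta\in M_\n$ assembles into $\sum_\n\n^\h\omega_\n(z,z')$ with the asserted prefactor.

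The hard part will be the inductive operator identity of the second step. Its proof is a careful two-variable bookkeeping of derivatives of a rational function, and the nonlinear identity $(cz_1+d)(cz_4+a)=cF+(cz_2+\nu)^2$ is what makes the cross term collapse neatly onto the Gegenbauer recurrence. Once that identity is in hand, Steps~3 and~4 are essentially a rerun of the Eisenstein calculation already performed for $\h=0$.
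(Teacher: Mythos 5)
Your proposal is correct in its main substance and takes a genuinely different route to the key computation. The paper never evaluates $\partial_\h\det(CZ+D)^{-m/2}$ for a general symplectic matrix: it first normalizes coset representatives to the form $\mathcal{A}_\nu u(A)d(B)$, invokes the fact (cited from Ibukiyama) that $\partial_\h$ intertwines the slash actions of $u(A)$ and $d(B)$ with the weight-$k$ actions in the two separate variables, and is then left only with $1|_{m/2}[\mathcal{A}_\nu](Z)=(\nu^2z_1-2\nu z_2+z_4)^{-m/2}$, a power of a single linear form on which the Gegenbauer operator acts by the scalar $G_m^\h(\nu,\nu^2)\cdot(m/2)\cdots(k-1)$. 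You instead prove the coset-by-coset formula $\partial_\h\det(CZ+D)^{-m/2}\big|_{z_2=0}=G_m^\h(1,1)\,\frac{(k-1)!}{(m/2-1)!}\,\nu(M)^\h P^{-k}$ directly. I checked the ingredients: the expansion $\det(CZ+D)=P-2\nu(M)z_2-cz_2^2$ (the $z_1z_2$ and $z_2z_4$ coefficients cancel, and $CD^T=DC^T$ doubles the $z_2$-coefficient), the factorization $(cz_1+d)(cz_4+a)=cF+(cz_2+\nu)^2$ which is precisely $\det\varphi(M)=\nu(M)^2$ from Proposition \ref{prp:VarPhi}, the Gegenbauer recurrence, the closed form $G_m^\h(1,1)=\binom{m+\h-3}{\h}$, and the cancellation of the cross term in the induction; I also verified the operator identity explicitly for $\h=1,2$. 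This all works. Your route is self-contained (no appeal to the intertwining property of $\partial_\h$ with the slash operators of $u(A)$ and $d(B)$), at the price of a longer two-variable induction; the paper's route is shorter but outsources that intertwining to a citation.

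One caveat, which your write-up shares with the paper's own proof: the claim $e(\sign(\mathcal{D})/4)=(-1)^k$ is \emph{not} a consequence of $m\equiv\sign(\mathcal{D})\bmod 8$ alone. One has $e(\sign(\mathcal{D})/4)=(-1)^{m/2}$ while $(-1)^k=(-1)^{m/2+\h}$, so the two agree only for even $\h$. In the final matching step the substitution $\delta\mapsto-\delta$ on $M_\nu$ converts $\sum_\gamma\rho_\mathcal{D}(\delta)^{-1}e^{-\gamma}\otimes e^\gamma$ into $(-1)^k e(\sign(\mathcal{D})/4)\sum_\gamma\rho_\mathcal{D}(\delta)^{-1}e^{\gamma}\otimes e^\gamma=(-1)^\h\sum_\gamma\rho_\mathcal{D}(\delta)^{-1}e^{\gamma}\otimes e^\gamma$, so for odd $\h$ the computation actually yields $(-1)^\h$ times the stated right-hand side. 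This affects only the overall sign of the constant (and hence nothing downstream, where only its non-vanishing is used), and it is hidden in the paper's phrase ``the last two steps are the same as in Theorem \ref{thm:EIsHeckeKernel}'' just as much as in your parenthetical; still, you should not assert the identity $e(\sign(\mathcal{D})/4)=(-1)^k$ as following from $m\equiv\sign(\mathcal{D})\bmod 8$ when $\h$ is odd.
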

\begin{proof}
	We have already seen that for any $M\in\Gamma_\infty^{(2)}\backslash\Gamma^{(2)}$ we can find a representative such that $M = \mathcal{A}_\n u(A)d(B)$ for suitable $A,B\in\Gamma^{(1)}$ and $\n\in\Z_{\geq0}$. Hence,
	\begin{align*}
		E_{m/2}^{(2)}(Z) = \sum_{\n\in\Z_{\geq0}}\sum_{A,B}1|_{m/2}[\mathcal{A}_\n u(A)d(B)]\rho_\mathcal{D}^{(2)}(M)^{-1}(e^{0}\otimes e^0),
	\end{align*}
	where the second sum ranges over the appropriate $A$ and $B$ (which depend on $\n$). It was shown in \cite[section 3.1.1]{I} that $\partial_{\h}$ commutes with the slash operator of $u(A)$ and $d(B)$ on $\mathbb{H}\times\mathbb{H}\subset\mathbb{H}_2$, i.e.\ for a $f:\mathbb{H}_2\rightarrow\C$ we have
	\begin{align*}
		\partial_{\h}(f|_{m/2}[u(A)]) &= (\partial_{\h}f)|_{m/2+h}^1[A]\quad\text{and} \\
		\partial_{\h}(f|_{m/2}[d(B)]) &= (\partial_{\h}f)|_{m/2+h}^2[B],
	\end{align*}
	where $|_k^1$ acts on the first and $|_k^2$ on the second variable (recall that $(\partial_\h f):\mathbb{H}\times\mathbb{H}\rightarrow\C$). Therefore, we obtain
	\begin{align*}
		(\partial_\h E_{m/2}^{(2)})(z_1,z_4) &= \sum_{\n\in\Z_{\geq0}}\sum_{A,B}\partial_\h (1|_{m/2}[\mathcal{A}_\n a(A)d(B)]\rho_\mathcal{D}^{(2)}(M)^{-1}(e^{0}\otimes e^0) \\
		&= \sum_{\n\in\Z_{\geq0}}\sum_{A,B}(\partial_\h 1|_{m/2}[\mathcal{A}_{\n}])|_{k}^1[A]|_{k}^2[B]\rho_\mathcal{D}^{(2)}(M)^{-1}(e^{0}\otimes e^0).
	\end{align*}
	Since
	\begin{align*}
		\mathcal{A}_\n &=
		\begin{pmatrix}
		\n^2+\n & -\n-1 & -1 & -\n-1 \\
		-\n-1    & 1   & 0 & 0   \\
		-\n      & 1  & 0 & 0   \\
		0      & 0   & -1 & -\n
		\end{pmatrix},
	\end{align*}
	we have
	\begin{align*}
		1|_{m/2}[\mathcal{A}_\n](Z) &= \det\left(\begin{pmatrix} -\n z_1+z_2 & -\n z_2+z_4 \\ -1 & -\n \end{pmatrix}\right)^{-m/2} = (\n^2z_1-2\n z_2+z_4)^{-m/2}.
	\end{align*}
	Then
	\begin{align*}
		\frac{\partial}{\partial z_2}(\n^2z_1-2\n z_2+z_4)^{-s} &= 2\n s(\n^2z_1-2\n z_2+z_4)^{-s-1}
	\end{align*}
	and
	\begin{align*}
		\frac{\partial^2}{\partial z_1\partial z_4}(\n^2z_1-2\n z_2+z_4)^{-s} &= \frac{\partial}{\partial z_4}\n^2(-s)(\n^2z_1-2\n z_2+z_4)^{-s-1} \\
		&= \n^2s(s+1)(\n^2z_1-2\n z_2+z_4)^{-s-2}.
	\end{align*}
	Since $G_m^\h(x,y^2)$ is homogeneous of degree $\h$, we find
	\begin{align*}
		G_m^\h\left(\frac{1}{2}\frac{\partial}{\partial z_2},\frac{\partial^2}{\partial z_1\partial z_4}\right)&(\n^2z_1-2\n z_2+z_4)^{-m/2} \\
		&= G_m^\h(\n,\n^2)\cdot m/2\cdot\hdots\cdot(k-1)\cdot(\n^2z_1-2\n z_2+z_4)^{-k} \\
		&= G_m^\h(1,1)\cdot\frac{(k-1)!}{(m/2-1)!}\cdot \n^\h\cdot 1|_{k}[\mathcal{A}_\n](Z).
	\end{align*}
	Hence
	\begin{align*}
		(\partial_\h E_{m/2}^{(2)})(z_1,z_4) &= G_m^\h(1,1)\cdot\frac{(k-1)!}{(m/2-1)!}\cdot\sum_{\n\in\Z_{\geq0}}\n^\h \\
		&\qquad\qquad\qquad\sum_{A,B}1|_{k}[\mathcal{A}_\n u(A)d(B)]\left(\left(\begin{smallmatrix}
			z_1&0\\0&z_4
		\end{smallmatrix}\right)\right)\rho_\mathcal{D}^{(2)}(M)^{-1}(e^{0}\otimes e^0) \\
		&= G_m^\h(1,1)\cdot\frac{(k-1)!}{(m/2-1)!}\cdot\frac{e(\sign(\mathcal{D})/8)}{\sqrt{|\mathcal{D}|}}\cdot\sum_{\n\in\Z_{>0}}\n^\h \\
		&\qquad\qquad\qquad\sumstack{M=\left(\begin{smallmatrix} a & b \\ c & d \end{smallmatrix}\right)\in M_\n}\frac{1}{(czz'+az+dz'+b)^k}\sum_{\gamma\in \mathcal{D}}\rho_\mathcal{D}(M)^{-1}e^\gamma\otimes e^{\gamma} \\
		&= G_m^\h(1,1)\cdot\frac{(k-1)!}{(m/2-1)!}\cdot\frac{e(\sign(\mathcal{D})/8)}{\sqrt{|\mathcal{D}|}}\sum_{\n\in\Z_{>0}}n^\h\omega_\n(z,z'),
	\end{align*}
	where the last two steps are the same as in Theorem \ref{thm:EIsHeckeKernel}.
\end{proof}

\section{The vector valued Siegel--Weil formula}\label{sec:SiegelWeil}

In this section we will see that the genus theta series defined in section \ref{sec:TheWeilRepresentation} is equal to the Eisenstein series defined in section \ref{sec:VectorValuedEisensteinSeries}. This is essentially an application of the Siegel--Weil formula shown by Weil in \cite{W2}. Some missing cases were later completed by Kudla and Rallis in \cite{KR}. The Siegel--Weil formula is formulated in an adelic setup that contains the classical case that we are interested in. We will state the Siegel--Weil formula and show how to deduce the equality of the genus theta series and the Eisenstein series from it. For this purpose we will first introduce this more general setup. We mostly follow \cite{Li}. Most of the calculations in this section are known, but there seems to be no reference giving the result in the generality that we require.

Consider $\Z^{2\g}$ together with the alternating bilinear form given by $(x,y)\mapsto x^TJ_\g y$ and let $\mathrm{Sp}_{2\g}$ be the corresponding symplectic $\Z$-group scheme. We denote by $P = A N \subset \mathcal{G} = \mathrm{Sp}_{2\g}$ the standard Siegel parabolic subgroup, so that we have
\begin{align*}
    A(R)&=\left\{a(u) = \begin{pmatrix} u & 0 \\ 0 & (u^T)^{-1} \end{pmatrix}\bigm\vert u\in\GL_\g(R)\right\}, \\
    N(R)&=\left\{n(s) = \begin{pmatrix} I_\g & s \\ 0 & I_\g \end{pmatrix}\bigm\vert s\in\mathrm{Sym}_\g(R)\right\}
\end{align*}
for any ring $R\supset\Z$. Let $V$ be a quadratic space over $\Q$ of dimension $m=2k$ with non-degenerate symmetric bilinear form $(\cdot,\cdot)$ and $V(R) = V\otimes_\Q R$ for any ring $R\supset\Q$. Let $\Ort(V)$ be the corresponding orthogonal group scheme. Then $(\mathcal{G}, \mathcal{H}) = (\mathrm{Sp}_{2\g}, \Ort(V))$ is a reductive dual pair. 

For a prime $p$ let $\Q_p$ denote the completion of $\Q$ at $p$ and $\Z_p$ the ring of integers of $\Q_p$. Let $\hat{\Z} = \prod_{p<\infty}\Z_p$ and $\A = \A_f\times\R$ be the ring of adeles of $\Q$. Let $\mathscr{S}(V(\A)^\g)$ be the space of Schwartz functions on $V(\A)^n$. For an additive character $\psi:\A\rightarrow\C^\times$ we will define a representation $\omega = \omega_{V,\psi}$ of $\mathcal{G}(\A)\times \mathcal{H}(\A)$ on this space that we will also call the Weil representation. It generalizes the representations described in section \ref{sec:TheWeilRepresentation}. Let $\mathrm{disc}(V) \in \Q^\times/(\Q^\times)^2$ be the discriminant of $V$ defined to be
\begin{align*}
	\mathrm{disc}(V) \coloneqq (-1)^{k}\det((x_i,x_j))_{i,j=1}^{2k}
\end{align*}
for any $\Q$-basis $\{x_1,\hdots, x_{2k} \}$ of $V$ and let $\chi_V:\A^\times/\Q^\times\rightarrow\C^\times$ be the quadratic character that corresponds to the quadratic extension $\Q(\sqrt{\mathrm{disc}(V)})/\Q$, i.e.\ $\chi_V(x) = (x,\mathrm{disc}(V))$, where $(\cdot,\cdot)$ is the Hilbert symbol. We denote by $|\cdot|:\A^\times\rightarrow\R_{>0}$ the normalized absolute value and define $(\underline{x},\underline{y}) \coloneqq ((x_i,y_j))_{i,j=1}^{\g}\in\mathrm{Mat}_\g(\A)$ for $\underline{x} = (x_1,\hdots,x_\g)\in V(\A)^\g$ and $\underline{y} = (y_1,\hdots,y_\g)\in V(\A)^\g$. Finally, denote by $\hat{\varphi}$ the Fourier transform of $\varphi$ using the self-dual Haar measure on $V(\A)^\g$ with respect to $\psi$, i.e.\
\begin{align*}
	\hat{\varphi}(\underline{x}) = \int_{V(\A)^\g}\varphi(\underline{y})\psi(\tr(\underline{x},\underline{y}))d\underline{y}.
\end{align*}
Let us now fix the standard additive character $\psi:\A\rightarrow\C^\times$ whose archimedean component is given by $\psi_\infty:\R\rightarrow\C^\times, \ x_\infty\mapsto e(x_\infty)$ and the finite components by $\psi_p:\Q_p\rightarrow\C^\times, \ x_p\mapsto e(-x_p')$, where $x_p'\in\Q/\Z$ is the principal part of $x_p$. The Weil representation $\omega = \omega_{V,\psi}$ is the representation of $\mathcal{G}(\A)\times \mathcal{H}(\A)$ on the space of Schwartz functions $\mathscr{S}(V(\A)^\g)$ that is uniquely determined by the following equations. Let $\varphi\in\mathscr{S}(V(\A)^\g)$ and $\underline{x}\in V(\A)^\g$, then
\begin{alignat*}{2}
    \omega(a(u))\varphi(\underline{x}) &= \chi_V(\det(u))|\det(u)|^k\varphi(\underline{x}\cdot u),\qquad &&a(u)\in A(\A), \\
    \omega(n(s))\varphi(\underline{x}) &= \psi(\textstyle{\frac{1}{2}}\tr(s(\underline{x},\underline{x})))\varphi(\underline{x}),\qquad &&n(s)\in N(\A), \\
    \omega(J_\g)\varphi(\underline{x}) &= \hat{\varphi}(\underline{x}), \\
    \omega(h)\varphi(\underline{x}) &= \varphi(h^{-1}\cdot\underline{x}),\qquad &&h\in H(\A).
\end{alignat*}
Note that for the local components $\omega_p$ we define $\omega_p(J_\g)\varphi_p(\underline{x}) = \gamma_p\hat{\varphi_p}(\underline{x})$, where $\gamma_p$ is an eighth root of unity. If $(t_+,t_-)$ is the signature of the quadratic space $V$, then $\gamma_\infty = e((t_+-t_-)/8)$ and \[ e((t_+-t_-)/8)\prod_{p<\infty}\gamma_p = 1. \] 
If $L$ is an even lattice in $V$, then $\rho_{L'/L}$ or rather its dual $\overline{\rho}_{L'/L}$ can be viewed as a subrepresentation of $\omega_{V,\psi}$. More precisely, let $L$ be a positive-definite even lattice of rank $2k$ and set $V\coloneqq L\otimes\Q$. Consider the subspace $\mathscr{S}_L$ of Schwartz functions in $\mathscr{S}(V(\A_f)^\g)$ which are supported on $(L')^\g\otimes\hat{\Z}$ and which are constant on cosets of $L^\g \otimes\hat{\Z}$. There is an isomorphism 
\[ \iota:\C[(L'/L)^\g]\to\mathscr{S}_L \]
given by mapping a basis element $e^{\underline{\mu}}$ of $\C[(L'/L)^\g]$ to $\varphi_{\underline{\mu}} = \bigotimes_{p<\infty}\varphi_p$, where $\varphi_p\in\mathscr{S}(V(\Q_p)^\g)$ is the characteristic function of $\underline{\mu} + (L\otimes\Z_p)^\g$. We obtain (cf.\ \cite[section 2.1.3]{Zh})
\begin{prp}\label{prp:WeilIsSub}
	For any $M\in\Gamma^{(\g)}$ we have 
	\[ \omega_f(M)\circ\iota = \iota\circ\overline{\rho}_{L'/L}^{(\g)}(M). \]
\end{prp}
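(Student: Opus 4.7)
The plan is to exploit the fact, recalled earlier, that $\Gamma^{(\g)}$ is generated by $J_\g$ together with the matrices $n(S)$ for $S\in\mathrm{Sym}_\g(\Z)$. Both sides of the intertwining identity are multiplicative in $M$, so it suffices to verify
\[ \omega_f(M)\varphi_{\underline{\mu}} = \iota\bigl(\overline{\rho}_{L'/L}^{(\g)}(M)e^{\underline{\mu}}\bigr) \]
for these two types of generators and an arbitrary basis element $\varphi_{\underline{\mu}}$.

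For $M=n(S)$, I would directly apply the formula $\omega(n(S))\varphi(\underline{x})=\psi(\tfrac{1}{2}\tr(S(\underline{x},\underline{x})))\varphi(\underline{x})$ at each finite prime $p$. On the support $\underline{\mu}_p+(L\otimes\Z_p)^\g$ of $\varphi_{\underline{\mu},p}$, writing $\underline{x}=\underline{\mu}_p+\underline{l}_p$ and expanding $(\underline{x},\underline{x})$, the cross terms lie in $\mathrm{Mat}_\g(\Z_p)$ because $(\mu_i,l_j),(l_i,l_j)\in\Z_p$, and the diagonal contributions $\tfrac{1}{2}(l_i,l_i)$ lie in $\Z_p$ by evenness of $L$. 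Hence $\omega_p(n(S))$ acts on $\varphi_{\underline{\mu},p}$ as multiplication by $\psi_p(\tfrac{1}{2}\tr(S(\underline{\mu}_p,\underline{\mu}_p)))$. Taking the product over $p$ and invoking $\psi_f(q)=e(-q)$ for $q\in\Q$ (from the triviality of the global $\psi$ on $\Q$) yields the scalar $e(-\tfrac{1}{2}\tr(S(\underline{\mu},\underline{\mu})))$, which is exactly how $\overline{\rho}_\D^{(\g)}(n(S))$ acts on $e^{\underline{\mu}}$.

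For $M=J_\g$, I would compute the local Fourier transform explicitly. Peeling off a translation factor yields
\[ \hat\varphi_{\underline{\mu},p}(\underline{x}) = \psi_p(\tr(\underline{x},\underline{\mu}_p))\int_{(L\otimes\Z_p)^\g}\psi_p(\tr(\underline{x},\underline{l}))\,d\underline{l}, \]
where the inner integral vanishes unless $\underline{x}\in(L'\otimes\Z_p)^\g$ and otherwise equals the volume of $(L\otimes\Z_p)^\g$. Applying Fourier inversion to $\mathbf{1}_{L\otimes\Z_p}$ pins the self-dual normalization to $\mathrm{vol}(L\otimes\Z_p)=|L'/L|_p^{1/2}$. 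Grouping $\underline{x}$ by its class $\underline{\nu}_p$ in $((L'/L)\otimes\Z_p)^\g$ gives
\[ \hat\varphi_{\underline{\mu},p}=|L'/L|_p^{\g/2}\sum_{\underline{\nu}_p}\psi_p(\tr(\underline{\nu}_p,\underline{\mu}_p))\varphi_{\underline{\nu}_p,p}. \]
Assembling across primes, the constant $\prod_p|L'/L|_p^{\g/2}\cdot\prod_p\gamma_p$ collapses by the product formula and the relation $e((t_+-t_-)/8)\prod_p\gamma_p=1$ (with $t_+-t_-=2k\equiv\sign(\D)\bmod 8$ for our positive-definite even lattice) to $|\D|^{-\g/2}\cdot e(-\g\sign(\D)/8)$. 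The local sums combine via the Chinese remainder theorem on $\D=\bigoplus_p(L'/L)_p$ into a single sum over $\underline{\nu}\in\D^\g$, and $\psi_f(q)=e(-q)$ turns $\psi_f$-values into $e(-\tr(\underline{\nu},\underline{\mu}))$, producing exactly $\iota$ applied to the defining formula for $\overline{\rho}_\D^{(\g)}(J_\g)e^{\underline{\mu}}$.

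The main obstacle is the normalization bookkeeping: ensuring that the self-dual Haar measure factor, the product of the $\gamma_p$, and the translation between the local $\psi_p$-values and the global exponential conspire to give exactly $e(-\g\sign(\D)/8)/\sqrt{|\D|^\g}$, rather than its inverse or a different power of $|\D|$. Once this is pinned down, everything else is routine expansion of definitions on the generators.
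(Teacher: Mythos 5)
Your argument is correct, and it is worth noting that the paper itself offers no proof of this proposition at all: it is stated with a bare reference to \cite[section 2.1.3]{Zh}. Your generator-by-generator verification (using that $\Gamma^{(\g)}$ is generated by $J_\g$ and the $n(S)$, together with the fact that both $M\mapsto\iota^{-1}\omega_f(M)\iota$ and $M\mapsto\overline{\rho}_{L'/L}^{(\g)}(M)$ are homomorphisms once the generators are seen to preserve $\mathscr{S}_L$) is exactly the standard computation carried out in that reference, so you have supplied the missing content rather than diverged from it. The local computations for $n(S)$ (evenness of $L$ killing the diagonal half-integral terms, integrality of the cross terms) and for $J_\g$ (support and volume of the local Fourier transform, self-dual normalization $\mathrm{vol}(L\otimes\Z_p)=|L'/L|_p^{1/2}$, product formula, and $\psi_f|_\Q=e(-\cdot)$) are all right. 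The one place where you should be slightly more careful is the root-of-unity bookkeeping: the relation $e((t_+-t_-)/8)\prod_{p<\infty}\gamma_p=1$ as printed in the paper is the genus-one statement, and for general $\g$ the local Weil indices enter as $\gamma_p(V)^\g$, so the relation you actually need is $e(\g(t_+-t_-)/8)\prod_{p<\infty}\gamma_p^\g=1$; you implicitly use this (your final constant $e(-\g\sign(\D)/8)/\sqrt{|\D|^\g}$ is the correct one), but you should state the $\g$-th power explicitly rather than quoting the paper's formula verbatim, since as literally written it would only produce $e(-\sign(\D)/8)$ rather than $e(-\g\sign(\D)/8)$.
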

We want to define theta series and Eisenstein series in the adelic setting. These will be generalizations of the corresponding classical objects.
\begin{defi}
For $\varphi\in\mathscr{S}(V(\A)^\g)$ define the \textit{theta series}
\begin{align*}
    \theta(g,h;\varphi) \coloneqq \sum_{\underline{x}\in V^\g}\omega(g)\varphi(h^{-1}\underline{x}),\qquad g\in \mathcal{G}(\A), \ h\in \mathcal{H}(\A).
\end{align*}
\end{defi}
Then $\theta(g,h;\varphi)$ is automorphic on both $\mathcal{G}$ and $\mathcal{H}$ (i.e.\ invariant under $\mathcal{G}(\Q)\times \mathcal{H}(\Q)$) by Poisson summation.
\begin{defi}
For $\varphi\in\mathscr{S}(V(\A)^\g)$ define the \textit{Siegel Eisenstein series}
\begin{align*}
    E(g,s;\varphi) \coloneqq \sum_{\gamma\in P(\Q)\backslash \mathcal{G}(\Q)}\Phi_\varphi(\gamma g,s),\qquad g\in G(\A), \ s\in \C,
\end{align*}
where 
\begin{align*}
    \mathscr{S}(V(\A)^\g)\rightarrow\mathrm{Ind}_{P(\A)}^{\mathcal{G}(\A)}(\chi_V|\cdot|^s),\qquad\varphi\mapsto\Phi_\varphi(g,s)\coloneqq(\omega(g)\varphi)(0)\cdot|\det u(g)|^{s-s_0}
\end{align*}
is the \textit{standard Siegel--Weil section} and
\begin{align*}
    s_0 \coloneqq k - \frac{\g+1}{2}.
\end{align*}
Here we write $g=na(u)k$ under the Iwasawa decomposition $\mathcal{G}(\A) = N(\A)A(\A)K$ for $K$ the standard maximal open compact subgroup of $\mathcal{G}(\A)$, and the quantity $|\det u(g)| \coloneqq |\det u|$ is well-defined.
\end{defi}
The Eisenstein series converges absolutely for $\Rep(s)>\frac{\g+1}{2}$ and thus defines an automorphic form for $\mathcal{G}(\Q)$. The average of the theta series over the orthogonal group is equal to the Eisenstein series at $s_0$.
\begin{thm}[Siegel--Weil formula (cf.\ \cite{W2} and \cite{KR})]\label{thm:SiegelWeil}
Let $V$ be a positive-definite quadratic space of rank $2k$ over $\Q$ and let $\varphi\in\mathscr{S}(V(\mathbb{A})^\g)$ with $k>\g+1$. Then $E(g,s;\varphi)$ is holomorphic at $s_0$ and
\begin{align*}
    \int_{\mathcal{H}(\Q)\backslash\mathcal{H}(\A)}\theta(g,h;\varphi)\mathrm{d}h = E(g,s_0;\varphi).
\end{align*}
Here the Haar measure $\mathrm{d}h$ is normalized so that $\mathrm{vol}(\mathcal{H}(\Q)\backslash \mathcal{H}(\A)) = 1$.
\end{thm}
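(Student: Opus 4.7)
My approach would follow Weil's classical argument from \cite{W2}: verify the identity by computing Fourier expansions of both sides along the unipotent radical $N$ and matching them coefficient by coefficient, indexed by symmetric matrices $T \in \mathrm{Sym}_\g(\Q)$. Both sides are automorphic forms on $\mathcal{G}(\A)$ in particular $N(\Q)$-invariant, so the $T$-th Fourier coefficient is extracted by integration against $\psi(-\tr(Ts))$ over $s\in\mathrm{Sym}_\g(\Q)\backslash\mathrm{Sym}_\g(\A)$.

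First, on the theta side, after exchanging sum and integration (legitimate because $V$ is positive-definite so $\mathcal{H}(\R)$ is compact and the integrals converge absolutely), I would decompose $V^\g(\Q)$ into $\mathcal{H}(\Q)$-orbits parametrized by Gram matrices $T=(\underline{x},\underline{x})$ via Witt's theorem. Using $\mathrm{vol}(\mathcal{H}(\Q)\backslash\mathcal{H}(\A))=1$ this yields
\[ \int_{\mathcal{H}(\Q)\backslash\mathcal{H}(\A)}\theta(g,h;\varphi)\mathrm{d}h \;=\; \sum_{T\geq 0} W_T^\theta(g;\varphi), \]
where $W_T^\theta(g;\varphi) = \int_{\mathcal{H}_T(\A)\backslash\mathcal{H}(\A)}\omega(g)\varphi(h^{-1}\underline{x}_T)\mathrm{d}h$ factors as a product over places of local orbital integrals of $\varphi_p$ on $\mathcal{H}(\Q_p)/\mathcal{H}_T(\Q_p)$.

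Second, on the Eisenstein side, the Bruhat decomposition $\mathcal{G}(\Q)=\bigsqcup_{r=0}^\g P(\Q)w_r P(\Q)$ indexed by the rank of the lower-left block yields a Fourier expansion $E(g,s;\varphi)=\sum_T W_T^E(g,s;\Phi_\varphi)$ whose $T$-th coefficient is a product over places of local Whittaker integrals of the standard Siegel--Weil section $\Phi_\varphi$. Since $k>\g+1$, the defining series converges absolutely at $s_0 = k-(\g+1)/2$, which gives holomorphy there automatically and avoids the need for any regularization.

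The heart of the argument is the local Siegel--Weil identity at each place $p$,
\[ \int_{\mathcal{H}(\Q_p)/\mathcal{H}_T(\Q_p)}\omega_p(g)\varphi_p(h^{-1}\underline{x}_T)\mathrm{d}h \;=\; W_{T,p}(g,s_0;\Phi_\varphi), \]
matching local orbital integrals to local Whittaker integrals of $\Phi_\varphi$; these identities are verified by direct computation from the explicit formulas for $\omega_p$, treating unramified finite, ramified finite and archimedean places separately. I expect the main obstacle to be matching the degenerate terms with $T$ singular, where contributions from smaller Bruhat cells on the Eisenstein side must be paired with lower-rank $\mathcal{H}(\Q)$-orbits in $V^\g$ on the theta side; the hypothesis $k>\g+1$ keeps all such contributions absolutely convergent and sidesteps the subtler second-term identities at the boundary of the convergent range that are the main content of \cite{KR}. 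Once these local identities are established and the degenerate terms matched, summing over $T$ yields the claimed global equality.
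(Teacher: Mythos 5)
The paper does not prove this statement: Theorem \ref{thm:SiegelWeil} is imported verbatim from Weil \cite{W2} and Kudla--Rallis \cite{KR}, and the author's own work begins only afterwards, in deducing Theorem \ref{thm:SiegelWeilApp} from it. So there is no internal proof to compare yours against; what you have written is an outline of how the cited result is established in the literature.

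As such an outline, your sketch correctly identifies the architecture of the classical convergent-range argument: compare Fourier coefficients along $N$, unfold the theta integral into $\mathcal{H}(\Q)$-orbits parametrized by Gram matrices $T$ via Witt's theorem, expand the Eisenstein series via the Bruhat decomposition into local Whittaker integrals, and reduce to a local identity at each place; the hypothesis $k>\g+1$ does indeed place $s_0$ in the range of absolute convergence and avoids the regularization and second-term identities of \cite{KR}. Two points deserve flagging. First, your expression for $W_T^\theta$ omits the factor $\mathrm{vol}(\mathcal{H}_T(\Q)\backslash\mathcal{H}_T(\A))$ that arises when you pass from the sum over the orbit $\mathcal{H}(\Q)\cdot\underline{x}_T$ to an integral over $\mathcal{H}_T(\A)\backslash\mathcal{H}(\A)$; this volume is not a cosmetic constant --- it is where the mass-formula content of the theorem lives, and the local orbital integrals only match the local Whittaker integrals after the measures on $\mathcal{H}_T$ and $\mathcal{H}$ are normalized compatibly with the global normalization $\mathrm{vol}(\mathcal{H}(\Q)\backslash\mathcal{H}(\A))=1$. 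Second, the ``local Siegel--Weil identity'' for nonsingular $T$ is not a routine computation from the formulas for $\omega_p$: it is Weil's theorem identifying the Fourier transform of the orbit measure (equivalently, local representation densities) with the value of the local Whittaker function of the Siegel--Weil section, and it is the genuine content of the proof. You correctly locate the remaining difficulty in the singular $T$, which in the convergent range is handled by induction on the rank, pairing lower-rank orbits with smaller Bruhat cells. With those caveats your proposal is a faithful, if highly compressed, summary of the proof the paper is citing rather than giving.
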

Now we can use Theorem \ref{thm:SiegelWeil} to prove that the genus theta series defined in section \ref{subsec:ModularFormsForTheWeilRepresentation} is equal to the Eisenstein series defined in section \ref{sec:VectorValuedEisensteinSeries}.
\begin{thm}\label{thm:SiegelWeilApp}
Let $L$ be a positive-definite even lattice of rank $2k$ for $k\in\Z_{>0}$. Let $G$ be the genus of $L$. Then
\begin{align*}
    \theta_{G}^{(\g)} = E_{k,L'/L}^{(\g)}.
\end{align*}
for $k>\g+1$.
\end{thm}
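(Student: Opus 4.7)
The plan is to deduce Theorem \ref{thm:SiegelWeilApp} as the classical incarnation of the adelic Siegel--Weil identity of Theorem \ref{thm:SiegelWeil}, applied to a carefully chosen Schwartz function. I fix a representative $L \in G$, set $V = L\otimes\Q$, and take $\varphi = \varphi_\infty \otimes \varphi_f$ with $\varphi_\infty(\underline{x}) = e^{-\pi\tr((\underline{x},\underline{x}))}$ the standard Gaussian, and $\varphi_f = \iota(e^{\underline{0}})$ the characteristic function of $(L\otimes\hat{\Z})^\g$. To obtain a vector-valued identity rather than a scalar one, I let $\varphi_f$ range over the subspace $\iota(\C[(L'/L)^\g]) = \mathscr{S}_L$ and assemble the resulting scalars into a $\C[\mathcal{D}^\g]$-valued identity.

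For $Z = X+iY \in \mathbb{H}_\g$ I choose $g_Z \in \mathcal{G}(\A)$ whose archimedean component is $n(X)a(Y^{1/2})$ (sending the base point $iI_\g$ to $Z$) and whose finite component is trivial. For the Eisenstein side, the definition of the standard Siegel--Weil section combined with Proposition \ref{prp:WeilIsSub} and the Iwasawa decomposition gives, for each $\underline{\mu}\in \mathcal{D}^\g$, an identity of the form
\begin{align*}
    \det(Y)^{k/2}\, E(g_Z, s_0;\iota(e^{\underline{\mu}})) \;=\; \bigl\langle E_{k,\mathcal{D}}^{(\g)}(Z),\, e^{\underline{\mu}}\bigr\rangle
\end{align*}
(after conjugation, reflecting that $\omega_f$ realizes the dual representation $\overline{\rho}_{L'/L}^{(\g)}$), so that assembling these scalars across $\underline{\mu}$ recovers the classical $E_{k,\mathcal{D}}^{(\g)}(Z)$.

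For the theta side, I invoke the double coset decomposition $\mathcal{H}(\A_f) = \bigsqcup_i \mathcal{H}(\Q)\, h_i K_L$, where $K_L = \prod_p \Aut(L\otimes\Z_p)$ is the stabilizer of $\varphi_f$. By standard genus theory, the orbits are in bijection with isometry classes of lattices $L_i \coloneqq V \cap h_i(L\otimes\hat{\Z})$ in $G$, and the stabilizer of each $h_i$ in $\mathcal{H}(\Q)$ equals $\Aut(L_i)$. Using that $\mathcal{H}(\R)$ is compact (as $V$ is positive-definite), the integral splits as
\begin{align*}
    \int_{\mathcal{H}(\Q)\backslash\mathcal{H}(\A)}\theta(g_Z, h;\varphi)\,\mathrm{d}h \;=\; \sum_i \frac{\mathrm{vol}(K_L)\,\mathrm{vol}(\mathcal{H}(\R))}{\#\Aut(L_i)}\,\theta(g_Z, h_i;\varphi).
\end{align*}
Each $\theta(g_Z, h_i;\varphi)$ is supported on $\underline{x}\in L_i^\g$, and when assembled across $\underline{\mu}$-components reproduces $\sigma_i^{*(\g)}\theta_{L_i}^{(\g)}(Z)$ for a specific $\sigma_i \in \Iso(\mathcal{D}, L_i'/L_i)$ determined by $h_i$. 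The residual ambiguity in choosing $h_i$ within $\mathcal{H}(\Q) h_i K_L$ acts via the natural map $K_L \twoheadrightarrow \Ort(L_i'/L_i)$ modulo $\Aut(L_i)$, and precisely generates the full symmetric sum $\sum_{\sigma\in\Iso(\mathcal{D},L_i'/L_i)}\sigma^{*(\g)}\theta_{L_i}^{(\g)}$ with weight $|\Ort(\mathcal{D})|^{-1}$. Normalizing $\mathrm{vol}(\mathcal{H}(\Q)\backslash\mathcal{H}(\A)) = 1$ forces $\mathrm{vol}(K_L)\,\mathrm{vol}(\mathcal{H}(\R))\,\mu(G) = 1$, producing exactly the prefactor $\mu(G)^{-1}|\Ort(\mathcal{D})|^{-1}$ from the definition of $\theta_G^{(\g)}$.

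The principal difficulty lies in the careful bookkeeping of normalizations and isomorphism-theoretic identifications: I must verify that the double coset structure precisely matches the $(L_i,\sigma)$-parametrization in the definition of $\theta_G^{(\g)}$, and that the Weil representation constants $\gamma_\infty$, $\gamma_p$ telescope via $\gamma_\infty\prod_p\gamma_p = 1$ to give the correct classical transformation without leftover phase factors. A second subtlety is matching the $\chi_V|\cdot|^s$-twisted standard section at $s_0 = k-(\g+1)/2$ with the unnormalized classical $E_{k,\mathcal{D}}^{(\g)}$, which requires translating the adelic Iwasawa coordinates into the classical Petersson slash operator.
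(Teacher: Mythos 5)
Your proposal follows essentially the same route as the paper's proof: it applies the adelic Siegel--Weil formula to $\varphi=\varphi_\infty\otimes\iota(e^{\underline{\mu}})$ with $g_Z=n(X)a(Y^{1/2})$, unfolds the integral over $\mathcal{H}(\Q)\backslash\mathcal{H}(\A)$ via the double coset decomposition by $K_L$ (the paper's $S$) to obtain the mass-weighted sum over the genus with the $|\Ort(\D)|^{-1}$-averaged sum over $\sigma\in\Iso(\D,L_i'/L_i)$ coming from the surjection onto $\Ort(\D)$, and identifies the Eisenstein side through the standard section and Proposition \ref{prp:WeilIsSub}. The only quibble is a bookkeeping slip: the normalization relating $E(g_Z,s_0;\iota(e^{\underline{\mu}}))$ to $\langle E_{k,\D}^{(\g)}(Z),e^{\underline{\mu}}\rangle$ should be $\det(Y)^{-k/2}=|\det U|_\infty^{-k}$ (with $Y=U^2$) rather than $\det(Y)^{k/2}$, which is harmless since the same factor is stripped from both sides of the Siegel--Weil identity.
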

\begin{proof}
    We will show that for a suitable choice of $\varphi$, the component functions of the vector valued theta series can be recovered from $\theta(g,h;\varphi)$. Averaging over $\mathcal{H}(\Q)\backslash\mathcal{H}(\A)$ then becomes a sum over the genus of $L$. Finally we show that $E(g,s_0;\varphi)$ recovers the component functions of the Eisenstein series and so the result follows from the Siegel--Weil formula. \\
    Let $V \coloneqq L\otimes_\Z\Q$, $\mathcal{D}=L'/L$ and $\underline{\mu} = (\mu_1,\hdots,\mu_\g)\in \mathcal{D}^\g$. Let $\varphi_\infty(\underline{x}) = e^{-\pi\tr(\underline{x},\underline{x})}$ be the standard Gaussian function and set $\varphi = \varphi_\infty\otimes\iota(e^{\underline{\mu}})$.
    For $Z = X+iY\in\mathbb{H}_\g$, we consider $g_Z = n(X)a(U)\in \mathcal{G}(\R)$, where $U\in\mathrm{Pos}_\g(\R)$ is symmetric such that $Y = U^2$. Then $g_Z\cdot iI_\g = Z$ and
    \begin{align*}
        \omega_\infty(g_Z)\varphi_\infty(\underline{x}) = \chi_\infty(\det U)|\det U|_\infty^k\cdot e^{\pi i\tr((\underline{x},\underline{x})Z)}.
    \end{align*}
    We show that
    \begin{align*}
        \chi_\infty(\det U)^{-1}|\det U|_\infty^{-k}\cdot\int_{\mathcal{H}(\Q)\backslash \mathcal{H}(\A)}\theta(g_Z,h;\varphi)\mathrm{d}h
    \end{align*}
    is the $\underline{\mu}$-component of the genus theta series: \\    
    Let $S\subset \mathcal{H}(\A)$ be the stabilizer of $L$. Then we have a bijection
    \begin{align*}
        \mathcal{H}(\Q)\backslash \mathcal{H}(\A)/S\xrightarrow{\sim} G,\qquad h\mapsto h(L\otimes\hat{\Z})\cap V.
    \end{align*}
    Let $\{h_j\}$ be a complete set of representatives of $\mathcal{H}(\Q)\backslash \mathcal{H}(\A)/S$ and let $\{L_j\}$ be the corresponding representatives of $G$ under this bijection. Then
    \begin{align*}
        \int_{\mathcal{H}(\Q)\backslash \mathcal{H}(\A)}\theta(g_Z,h;\varphi)\mathrm{d}h = \sum_j\int_{\mathcal{H}(\Q)\backslash \mathcal{H}(\Q)h_jS}\theta(g_Z,h;\varphi)\mathrm{d}h.
    \end{align*}
    Substituting $h$ for $hh_j$ and applying $\mathcal{H}(\Q)\cap h_jSh_j^{-1} = \Aut(L_j)$, we obtain
    \begin{align*}
    	\int_{\mathcal{H}(\Q)\backslash \mathcal{H}(\Q)h_jS}\theta\left(g_Z,h;\varphi\right)\mathrm{d}h &= \int_{\mathcal{H}(\Q)\backslash \mathcal{H}(\Q)h_jSh_j^{-1}}\theta\left(g_Z,hh_j;\varphi\right)\mathrm{d}h \\
    	&= \frac{1}{\#\Aut(L_j)}\int_{h_jSh_j^{-1}}\theta\left(g_Z,hh_j;\varphi\right)\mathrm{d}h.
    \end{align*}
    Substituting $h$ for $h_jhh_j^{-1}$, the third integral becomes
    \begin{align*}
    	\int_{S}\theta\left(g_Z,h_jh;\varphi\right)\mathrm{d}h 
    	&= \int_S\sum_{\underline{x}\in V^\g}(\omega_\infty(g_Z)\varphi_{\infty}\otimes\iota(e^{\underline{\mu}}))(h^{-1}h_j^{-1}\underline{x})\mathrm{d}h \\
    	&= \frac{\mathrm{vol}(S)}{|\Ort(\mathcal{D})|}\sum_{\sigma\in\Iso(\mathcal{D},L_j'/L_j)}\sum_{\underline{x}\in\sigma\underline{\mu} + L_j^\g}(\omega_\infty(g_Z)\varphi_\infty)(\underline{x}),
    \end{align*}
    where in the last equation we used that the canonical homomorphism $S\rightarrow\Ort(\mathcal{D})$ is surjective (see \cite[Corollary 1.9.6.]{N}). Combining these computations we find
    \begin{align*}
        \chi_\infty(\det U)^{-1}&|\det U|_\infty^{-k}\cdot\int_{\mathcal{H}(\Q)\backslash \mathcal{H}(\A)}\theta(g_Z,h;\varphi)\mathrm{d}h \\
        &= \frac{\mathrm{vol}(S)}{|\Ort(\mathcal{D})|}\sum_j\frac{1}{\#\Aut(L_j)}\sum_{\sigma\in\Iso(\mathcal{D},L_j'/L_j)}\sum_{\underline{x}\in \sigma\underline{\mu} + L_j^\g}e^{\pi i\tr((\underline{x},\underline{x})Z)} \\
        &= \frac{\mathrm{vol}(S)}{|\Ort(\mathcal{D})|}\sum_j\frac{1}{\#\Aut(L_j)}\sum_{\sigma\in\Iso(\mathcal{D},L_j'/L_j)}\langle\theta_{L_j}^{(\g)},e^{\sigma\underline{\mu}}\rangle \\
        &= \frac{\mathrm{vol}(S)}{|\Ort(\mathcal{D})|}\sum_j\frac{1}{\#\Aut(L_j)}\sum_{\sigma\in\Iso(\mathcal{D},L_j'/L_j)}\langle\sigma^*\theta_{L_j}^{(\g)},e^{\underline{\mu}}\rangle.
    \end{align*}
    Finally, note that
    \begin{align*}
        \mathrm{vol}(S)\cdot\mu(G) &= \mathrm{vol}(S)\cdot\sum_j\frac{1}{\#\Aut(L_j)} = \mathrm{vol}(\mathcal{H}(\Q)\backslash \mathcal{H}(\A)) = 1
    \end{align*}
    by our choice of normalization.\\    
    On the other hand we consider $\chi_\infty(\det U)^{-1}|\det U|_\infty^{-k}\cdot E(g_Z,s;\varphi)$ at the value $s_0$: \\
    It is not difficult to prove that $P(\Q)\backslash \mathcal{G}(\Q)\cong\Gamma_\infty^{(\g)}\backslash\Gamma^{(\g)}$. We write
    \begin{align*}
        \Phi_\varphi(g,s) = \Phi_f(g,s)\otimes\Phi_\infty(g,s)
    \end{align*}
    with
    \begin{align*}
        \Phi_f(g,s) = \bigotimes_{p<\infty}\Phi_p(g,s).
    \end{align*}
    One can show that for $M=\left(\begin{smallmatrix} A & B \\ C & D \end{smallmatrix}\right)\in\Gamma^{(\g)}$
    \begin{align*}
        \Phi_\infty(Mg_Z,s) = \det(U)^{k+s-s_0}\det(CZ+D)^{-k}|\det(CZ+D)|_\infty^{-s+s_0}.
    \end{align*}
    For details see for example \cite{Ku}. We consider the finite part. Note that for $v\in\C[\mathcal{D}^\g]$, $\underline{\beta}\in\mathcal{D}^\g$ and $\underline{x}\in\underline{\beta}+ L^\g$ we have $\iota(v)(\underline{x}) = \langle v,e^{\underline{\beta}}\rangle$. Furthermore, $\omega(g_Z)$ acts trivially on $\iota(e^{\underline{\mu}})$ since $g_Z\in \mathcal{G}(\R)$ and therefore
    \begin{align*}
        \chi_\infty(\det U)^{-1}|\det U|_\infty^{-k}\cdot E(g_Z,s_0;\varphi_\infty\otimes\iota(e^{\underline{\mu}})) &= \sum_{M\in\Gamma_\infty^{(\g)}\backslash\Gamma^{(\g)}}\det(CZ+D)^{-k}\omega_f(M)\iota(e^{\underline{\mu}})(0) \\
        &= \sum_{M\in\Gamma_\infty^{(\g)}\backslash\Gamma^{(\g)}}\det(CZ+D)^{-k}\langle\overline{\rho}_{L'/L}^{(\g)}(M)e^{\underline{\mu}},e^{\underline{0}}\rangle \\
        &= \sum_{M\in\Gamma_\infty^{(\g)}\backslash\Gamma^{(\g)}}\det(CZ+D)^{-k}\langle\rho_{L'/L}^{(\g)}(M^{-1})e^{\underline{0}},e^{\underline{\mu}}\rangle \\
        &= \langle E_{k}^{(\g)},e^{\underline{\mu}}\rangle,
    \end{align*}
    where we used Proposition \ref{prp:WeilIsSub} and the fact that $\chi_\infty(x)|x|_\infty^k = x^k$. This proves the theorem.
\end{proof}

\section{The space of theta series}\label{sec:ThetaFunctions}

In this section we will prove the main theorem of this paper. As explained in the introduction, we define a map from the space of cusp forms to the space generated by the theta series by integrating a cuspform against the genus theta series of Siegel genus $2$ evaluated on a diagonal matrix. This process is called the doubling method. By the Siegel--Weil formula we can then substitute the genus theta series for the Eisenstein series. Using the results from sections \ref{sec:VectorValuedHeckeOperators} and \ref{sec:VectorValuedEisensteinSeries} we find that the resulting map is a linear combination of Hecke operators. Finally, we will show, that this map is bijective if the conditions of the main theorem are met.

Let $\mathcal{D}$ be a discriminant form of even signature and level $N$ and let $m$ be even with $m>p\text{-rank}(\mathcal{D})$ for all primes $p$. We set $G = I\!I_{m,0}(\mathcal{D})$ and $k=m/2+\h$ with $\h\geq0$. By \cite[Corollary 1.10.2]{N} $G$ is non-empty. We define a linear map
\begin{align*}
    \Phi \coloneqq \Phi_{\mathcal{D}} \coloneqq \Phi_{\mathcal{D},m,k}:\mathrm{S}_k(\mathcal{D}) \rightarrow\Theta_{m,k}(\mathcal{D})
\end{align*}
by
\begin{align*}
        \Phi_{\mathcal{D},m,k}(f)(z') \coloneqq \int_{\Gamma\backslash\mathbb{H}}\langle f(z),\vartheta_{G,k}(z,-\overline{z'})\rangle y^k\frac{\mathrm{d}x\mathrm{d}y}{y^2}.
\end{align*}
Recall that $\vartheta_{G,k} \coloneqq \partial_\h\theta_G^{(2)}$. Note that for a lattice $L$ we have
\begin{align*}
    \theta_L^{(2)}\left(\matr{z}{0}{0}{z'}\right) = \theta_L^{(1)}(z)\otimes\theta_L^{(1)}(z').
\end{align*}
Because of Proposition \ref{prp:partialthetaislin}, for $\h>0$ also
\[ (\partial_\h\theta_L^{(2)})(z,z') = C\cdot\sum_{i=1}^r\theta_{L,P_i}^{(1)}(z)\otimes\theta_{L,\overline{P_i}}^{(1)}(z'), \]
where $(P_1,\hdots,P_r)$ is an orthonormal basis of $\operatorname{H}_m^\h$ and $C$ is some non-zero constant. We therefore find that
\begin{align}\label{eq:PhiIsFamily}
        \Phi_{\mathcal{D},m,k}(f) = \mu(G)^{-1}|\Ort(\mathcal{D})|^{-1}C\sum_{L\in G}\frac{1}{\#\Aut(L)}\sum_{\sigma\in\mathrm{Iso}(\mathcal{D},L'/L)}\sum_{i=1}^r(f,\sigma^*\theta_{L,P_i})\cdot\sigma^*\theta_{L,P_i}.
\end{align}
If $\D'$ is a discriminant form isomorphic to $\D$, then for a $\tau\in\Iso(\D',\D)$ it follows from $\langle \tau^*v,\tau^*w\rangle = \langle v,w\rangle$ for all $v,w\in\C[\D]$ that
\begin{equation}\label{eq:PhiTauCommute}
	\Phi_{\D',m,k}\circ\tau^* = \tau^*\circ\Phi_{\D,m,k}.
\end{equation}
The following lemma will be useful.
\begin{lem}\label{lem:family}
    Let $V$ be a $\C$-vector space with scalar product $(\cdot,\cdot)$ which is linear in the first variable and $(v_i)_{i=1}^n\subset V$ an arbitrary finite family. Then $f:V\rightarrow V$ defined by
    \begin{align*}
        f(v) \coloneqq \sum_{i=1}^n(v,v_i)\cdot v_i
    \end{align*}
    is surjective onto $\spann(v_i)_{i=1}^n$ and is self-adjoint. In particular $f$ is diagonalizable and
    \begin{align*}
        V = \image(f)\oplus\ker(f).
    \end{align*}
\end{lem}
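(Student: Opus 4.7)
Set $W \coloneqq \spann(v_i)_{i=1}^n$. The containment $\image(f) \subset W$ is immediate from the definition of $f$, so the non-trivial part of surjectivity is the reverse inclusion. My plan is to identify $\ker(f)$ with $W^\perp$, after which the remaining assertions fall out from the finite-dimensionality of $W$.

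First, I would verify self-adjointness by direct computation: expanding $(f(v),w)$ and $(v,f(w))$ in terms of the defining sum and using that $(\cdot,\cdot)$ is linear in the first variable and conjugate-linear in the second shows both equal $\sum_i (v,v_i)\,(v_i,w)$. Next, I would establish the key equality $\ker(f)=W^\perp$. The inclusion $W^\perp\subset\ker(f)$ is trivial from the formula for $f$. For the reverse inclusion, if $f(v)=0$ then pairing against $v$ gives
\[ 0 = (f(v),v) = \sum_{i=1}^n (v,v_i)(v_i,v) = \sum_{i=1}^n |(v,v_i)|^2, \]
so $(v,v_i)=0$ for every $i$, i.e.\ $v\in W^\perp$.

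With this in hand, the restriction $f|_W:W\to W$ is injective; since $W$ is finite-dimensional, it is automatically a bijection onto $W$, so $\image(f) = \image(f|_W) = W$, proving surjectivity. Moreover, because $W$ is finite-dimensional (hence a closed subspace on which the inner product is positive definite), the orthogonal decomposition $V = W \oplus W^\perp$ holds, and substituting $W = \image(f)$ and $W^\perp = \ker(f)$ gives the asserted direct sum decomposition. Diagonalizability follows because $f$ acts as zero on $\ker(f)$ and as a self-adjoint operator on the finite-dimensional complex inner product space $W=\image(f)$, where the spectral theorem applies.

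The only conceptual step is the identification $\ker(f)=W^\perp$; everything else is either a direct unravelling of definitions or a standard fact from finite-dimensional linear algebra, so I do not expect any genuine obstacle.
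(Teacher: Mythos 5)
Your proof is correct, but it takes a genuinely different route from the paper's. The paper's proof passes through coordinates: it introduces the Gram matrix $A=\bigl((v_i,v_j)\bigr)_{i,j}$ and the evaluation map $g:\C^n\twoheadrightarrow\spann(v_i)_{i=1}^n$, $x\mapsto\sum_i x_iv_i$, verifies the intertwining relation $f\circ g = g(\cdot A)$ and the identity $\ker(g)=\ker(A)$, and then deduces surjectivity from the decomposition $\C^n=\image(A)\oplus\ker(A)$ for the self-adjoint matrix $A$. You instead work intrinsically: the identity $(f(v),v)=\sum_i|(v,v_i)|^2$ gives $\ker(f)=W^\perp$ directly for $W=\spann(v_i)_{i=1}^n$, after which injectivity of $f|_W$ on the finite-dimensional space $W$ forces $\image(f)=W$, and $V=W\oplus W^\perp$ yields the direct-sum statement. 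Both arguments ultimately rest on positive definiteness (yours through $\sum_i|(v,v_i)|^2=0\Rightarrow v\in W^\perp$, the paper's through $\ker(g)=\ker(A)$, which uses $W\cap W^\perp=0$). Your version is somewhat more economical and makes the identification $\ker(f)=\image(f)^\perp$ explicit, which is what the ``in particular'' clause of the lemma really amounts to; the paper's version has the minor advantage of reducing everything to the familiar spectral decomposition of a Hermitian matrix. There is no gap in your argument.
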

\begin{proof}
    Let $v,w\in V$. Then
\begin{align*}
    (f(v),w) &= \sum_{i=1}^n(v,v_i)\cdot (v_i,w) \\
    &= (v,\sum_{i=1}^n(w,v_i)\cdot v_i) \\
    &= (v,f(w)).
\end{align*}
Let $A=(a_{ij})$ with $a_{ij} = (v_i,v_j)$ be the Gram matrix of $(v_i)_{i=1}^n$ and define $g:\C^n\twoheadrightarrow \spann(v_i)_{i=1}^n$ by
\begin{align*}
    x = (x_1,\hdots,x_n) \mapsto \sum_{i=1}^nx_i\cdot v_i.
\end{align*}
Then
\begin{align*}
    x\cdot A = ((g(x),v_1),\hdots,(g(x),v_n))
\end{align*}
and $g(x\cdot A) = f(g(x))$. From the first assertion we see that $\ker(g) = \ker(A)$ and since $A$ is self-adjoint, $\C^n = \image(A) \oplus \ker(A) = \image(A) \oplus \ker(g)$. This implies that
\begin{align*}
    \image(f\circ g) = \image(g(\cdot A)) = \image(g) = \spann(v_i)_{i=1}^n.
\end{align*}
\end{proof}
Applying Lemma \ref{lem:family} to $\Phi$ yields
\begin{prp}\label{prp:PhiSurjects}
    Let $\mathcal{D}$, $m$ and $\h$ be as above. If $\h=0$, assume $m>4$. The linear map $\Phi$ is self-adjoint and surjective onto $\Theta_{m,k}(\mathcal{D})_0$. In particular we have
    \begin{align*}
        \mathrm{S}_k(\mathcal{D}) = \Theta_{m,k}(\mathcal{D})_0\oplus\ker(\Phi)
    \end{align*}
    and $\Phi$ is diagonalizable.
\end{prp}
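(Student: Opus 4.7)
The plan is to reduce the proposition to Lemma \ref{lem:family} applied to the explicit presentation \eqref{eq:PhiIsFamily}. Setting
\begin{align*}
    v_{L,\sigma,i} \coloneqq \sqrt{\frac{C\cdot\mu(G)^{-1}|\Ort(\mathcal{D})|^{-1}}{\#\Aut(L)}}\,\sigma^*\theta_{L,P_i}
\end{align*}
(with $L\in G$, $\sigma\in\Iso(\mathcal{D},L'/L)$ and $1\le i\le r$), formula \eqref{eq:PhiIsFamily} becomes $\Phi(f)=\sum(f,v_{L,\sigma,i})\,v_{L,\sigma,i}$, provided the constant $C$ from Proposition \ref{prp:partialthetaislin} is a positive real number. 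I would establish this by specialising to $y=x$: the generating identity gives $\sum_{\h\ge 0}G_m^\h(s,s^2)X^\h=(1-sX)^{-(m-2)}$, so $G_m^\h(s,s^2)\ge 0$ and $P_m^\h(x,x)\ge 0$, and comparison with $C\sum_i|P_i(x)|^2$ forces $C>0$.

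The crucial intermediate step is that $\Phi$ actually carries $\mathrm{S}_k(\mathcal{D})$ into itself. For $\h>0$ this is automatic, since each $v_{L,\sigma,i}$ is already a cusp form. For $\h=0$ it is the main obstacle, and I would prove it by unfolding the integral defining $\Phi$: the vector valued Siegel--Weil formula (Theorem \ref{thm:SiegelWeilApp}) replaces $\theta_G^{(2)}$ by the Eisenstein series $E_{m/2}^{(2)}$, and the pullback formula of Theorem \ref{thm:EIsHeckeKernel} (respectively Theorem \ref{thm:diffE} when $\h>0$) identifies $\vartheta_{G,k}(z,z')$ with a sum of the Hecke kernels $\omega_n(z,z')$, plus, when $\h=0$, the rank-one contribution $E_{m/2}^{(1)}(z)\otimes E_{m/2}^{(1)}(z')$. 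Proposition \ref{prp:HeckeKernels} then yields, up to a non-zero scalar,
\begin{align*}
    \Phi(f) = C'\sum_{n=1}^{\infty}\frac{n^{\h}\,T(n^2)f}{n^{2k-2}},
\end{align*}
the spurious Eisenstein term being killed by orthogonality $(f,E_{m/2}^{(1)})=0$. Each $T(n^2)f$ is cuspidal by Theorem \ref{thm:BruinierStein}, so $\Phi(f)\in\mathrm{S}_k(\mathcal{D})$; the hypothesis $m>4$ for $\h=0$ enters at exactly this point, both through absolute convergence of the Hecke series (compare Proposition \ref{prp:EulerProduct}) and through the Siegel--Weil identification.

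With both inputs in hand, Lemma \ref{lem:family} applies to $\Phi:\mathrm{S}_k(\mathcal{D})\to\mathrm{S}_k(\mathcal{D})$ with the family $\widetilde v_{L,\sigma,i}$ equal to the cuspidal projections of the $v_{L,\sigma,i}$: for $f\in\mathrm{S}_k(\mathcal{D})$ one has $(f,v_{L,\sigma,i})=(f,\widetilde v_{L,\sigma,i})$, and the cuspidality of $\Phi(f)$ forces the Eisenstein tails to drop out of the sum. The lemma delivers self-adjointness, diagonalisability and the splitting $\mathrm{S}_k(\mathcal{D})=\image(\Phi)\oplus\ker(\Phi)$, with $\image(\Phi)=\spann\{\widetilde v_{L,\sigma,i}\}$. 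The inclusion $\image(\Phi)\subset\Theta_{m,k}(\mathcal{D})_0=\Theta_{m,k}(\mathcal{D})\cap\mathrm{S}_k(\mathcal{D})$ is visible from \eqref{eq:PhiIsFamily}; conversely any $g\in\Theta_{m,k}(\mathcal{D})_0$ already equals its own cuspidal projection, hence lies in $\spann\{\widetilde v_{L,\sigma,i}\}$. Therefore $\image(\Phi)=\Theta_{m,k}(\mathcal{D})_0$ as claimed.
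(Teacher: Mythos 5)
Your overall strategy---rewrite $\Phi$ as $f\mapsto\sum_i(f,v_i)v_i$ for a finite family of cusp forms and invoke Lemma \ref{lem:family}---is exactly the paper's, and for $\h>0$ your argument coincides with the paper's proof. Your extra attention to the constant in \eqref{eq:PhiIsFamily} is a good instinct (the paper is silent on it), but note that the constant actually entering \eqref{eq:PhiIsFamily} is not just the constant of Proposition \ref{prp:partialthetaislin}: applying $G_m^\h\bigl(\frac{1}{2}\frac{\partial}{\partial z_2},\frac{\partial^2}{\partial z_1\partial z_4}\bigr)$ to $e^{\pi i\tr((\underline{\lambda},\underline{\lambda})Z)}$ produces an additional factor $(\pi i)^\h$, which then gets conjugated by the antilinear pairing. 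Your specialization $y=x$ only shows positivity of the Gegenbauer constant, so the reality of the full constant---which is what self-adjointness requires---is not actually settled by your computation.

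The genuine gap is in the case $\h=0$. To prove that $\Phi(f)$ is cuspidal you route through the genus-$2$ Siegel--Weil formula (Theorem \ref{thm:SiegelWeilApp} with $\g=2$), the pullback formula (Theorem \ref{thm:EIsHeckeKernel}) and the Hecke kernels; every one of these requires $k>\g+1=3$, i.e.\ $m>6$, so your assertion that ``$m>4$ enters at exactly this point'' is not correct for your argument. The proposition is claimed for $m>4$, so the case $m=6$, $k=3$ is asserted but not covered by your proof. The paper avoids this by staying in genus $1$: Theorem \ref{thm:SiegelWeilApp} with $\g=1$ (needing only $k>2$, i.e.\ $m>4$) identifies the weighted average of the $\sigma^*\theta_L$ with $E_k^{(1)}$, whence $(f,\sigma^*\theta_L)=(f,\sigma^*\theta_L-E_k)$ for cusp forms $f$ and
\[ \Phi(f)=\mu(G)^{-1}|\Ort(\D)|^{-1}\sum_{L\in G}\frac{1}{\#\Aut(L)}\sum_{\sigma}(f,\sigma^*\theta_L-E_k)\cdot(\sigma^*\theta_L-E_k), \]
so that Lemma \ref{lem:family} applies directly to the family of cusp forms $\sigma^*\theta_L-E_k$, with no need to know in advance that $\Phi(f)$ is cuspidal. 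If you restrict to $m>6$ your route is correct, but it amounts to proving the subsequent Hecke-operator identity for $\Phi$ as a lemma, which is much heavier than necessary and additionally obliges you to justify the convergence of $\sum_{\n}T(\n^2)f/\n^{2k-2}$ and the interchange of summation and integration.
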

\begin{proof}
We begin with the case $\h=0$. By the Siegel--Weil formula we know that for $m>4$
\begin{align*}
    \mu(G)^{-1}|\Ort(\mathcal{D})|^{-1}\cdot\sum_{L\in G}\frac{1}{\#\Aut(L)}\sum_{\sigma\in\mathrm{Iso}(\mathcal{D},L'/L)}\sigma^*\theta_{L} &=  E_k.
\end{align*}
Furthermore, the forms $\sigma^*\theta_{L}-E_k\in\mathrm{S}_k(\mathcal{D})$ span $\Theta_{m,k}(\mathcal{D})_0$. Since any $f\in\mathrm{S}_k(\mathcal{D})$ is orthogonal on $E_k$, we thus get
\begin{align*}
    \Phi_{\mathcal{D},2k,k}(f) &= \mu(G)^{-1}|\Ort(\mathcal{D})|^{-1}\cdot\sum_{L\in G}\frac{1}{\#\Aut(L)}\sum_{\sigma\in\mathrm{Iso}(\mathcal{D},L'/L)}(f,\sigma^*\theta_{L})\cdot\sigma^*\theta_{L} \\
    &= \mu(G)^{-1}|\Ort(\mathcal{D})|^{-1}\cdot\sum_{L\in G}\frac{1}{\#\Aut(L)}\sum_{\sigma\in\mathrm{Iso}(\mathcal{D},L'/L)}(f,\sigma^*\theta_{L})\cdot\sigma^*\theta_{L} - (f,E_k)E_k \\
    &= \mu(G)^{-1}|\Ort(\mathcal{D})|^{-1}\cdot\sum_{L\in G}\frac{1}{\#\Aut(L)}\sum_{\sigma\in\mathrm{Iso}(\mathcal{D},L'/L)}(f,\sigma^*\theta_{L})\cdot(\sigma^*\theta_{L}-E_k) \\
    &= \mu(G)^{-1}|\Ort(\mathcal{D})|^{-1}\cdot\sum_{L\in G}\frac{1}{\#\Aut(L)}\sum_{\sigma\in\mathrm{Iso}(\mathcal{D},L'/L)}(f,\sigma^*\theta_{L}-E_k)\cdot(\sigma^*\theta_{L}-E_k)
\end{align*}
and we can apply Lemma \ref{lem:family} to the family $(\sigma^*\theta_{L}-E_k)_{(L,\sigma)}$. \\
If $\h>0$, we can immediately apply Lemma \ref{lem:family} because of equation (\ref{eq:PhiIsFamily}).
\end{proof}

It remains to determine when $\Phi_\mathcal{D}$ has trivial kernel. To do this we will need
\begin{lem}\label{lem:KernelsAreTheSame}
    Let $H\subset\mathcal{D}$ be an isotropic subgroup and let $f\in\ker(\Phi_{H^\bot/H})$. Then also $\uparrow_H(f)\in\ker(\Phi_{\mathcal{D}})$.
\end{lem}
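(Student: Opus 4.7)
The plan is to unpack $\Phi_\mathcal{D}(\uparrow_H f)$ directly from its integral definition, transfer $\uparrow_H$ to a $\downarrow_H$ acting on the first tensor factor of the genus kernel $\vartheta_{G,k}$ via adjointness, and then identify the resulting expression as a linear combination of Petersson pairings of $f$ against theta series on $H^\bot/H$. The lemma will then follow from Proposition \ref{prp:PhiSurjects}, which identifies $\ker\Phi_{H^\bot/H}$ with the Petersson-orthogonal complement of $\Theta_{m,k}(H^\bot/H)_0$ in $\mathrm{S}_k(H^\bot/H)$.

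First, adjointness of $\uparrow_H$ and $\downarrow_H$ applied pointwise under the integral gives
\[
\Phi_\mathcal{D}(\uparrow_H f)(z') = \int_{\Gamma\backslash\mathbb{H}}\langle f(z),(\downarrow_H\otimes\mathrm{id})\vartheta_{G,k}(z,-\overline{z'})\rangle y^k\frac{\mathrm{d}x\,\mathrm{d}y}{y^2}.
\]
Expanding $\theta_G^{(2)}$ over the genus and applying Proposition \ref{prp:partialthetaislin} to split the harmonic-weighted kernel, for $\h>0$ one obtains
\[
\vartheta_{G,k}(z,z') = C\mu(G)^{-1}|\Ort(\mathcal{D})|^{-1}\sum_{L\in G,\sigma,i}\frac{1}{\#\Aut(L)}\sigma^*\theta_{L,P_i}(z)\otimes\sigma^*\theta_{L,\overline{P_i}}(z'),
\]
with $(P_i)$ an orthonormal basis of $\operatorname{H}_m^\h$; for $\h=0$ the same formula holds with only $\sigma^*\theta_L(z)\otimes\sigma^*\theta_L(z')$ per $(L,\sigma)$. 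In either case $\downarrow_H$ need only be applied to single-variable theta series.

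The key step is the identity $\downarrow_H\sigma^*\theta_{L,P} = \tilde\sigma^*\theta_{M,P}$, where for each pair $(L,\sigma)$ the sublattice $M = M_{L,\sigma}\subset L'$ is defined by $M/L = \sigma(H)$ and $\tilde\sigma\in\Iso(H^\bot/H,M'/M)$ is the isomorphism induced by $\sigma$ on quotients. This follows from the commutative diagram on page \pageref{pg:sigmaTilde} applied to $\sigma^{-1}$, together with $\downarrow_{M/L}\theta_{L,P} = \theta_{M,P}$ from page \pageref{pg:SublatticeTheta}. The lattice $M$ automatically lies in $G'\coloneqq I\!I_{m,0}(H^\bot/H)$, since $M'/M\cong H^\bot/H$ and the signature is preserved.

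Substituting and using $\overline{\sigma^*\theta_{L,\overline{P_i}}(-\overline{z'})} = \sigma^*\theta_{L,P_i}(z')$ to push a complex conjugate past the second tensor factor, the integral collapses to
\[
\Phi_\mathcal{D}(\uparrow_H f)(z') = C'\sum_{L\in G,\sigma,i}\frac{1}{\#\Aut(L)}(f,\tilde\sigma^*\theta_{M_{L,\sigma},P_i})\cdot\sigma^*\theta_{L,P_i}(z'),
\]
with $(\cdot,\cdot)$ the Petersson product on $\mathrm{S}_k(H^\bot/H)$. By Proposition \ref{prp:PhiSurjects} applied to $H^\bot/H$, the hypothesis $f\in\ker\Phi_{H^\bot/H}$ yields $f\perp\Theta_{m,k}(H^\bot/H)_0$; for $\h>0$ each $\tilde\sigma^*\theta_{M,P_i}$ is a cusp form in $\Theta_{m,k}(H^\bot/H)_0$, so every Petersson pairing vanishes, while for $\h=0$ the identity $(f,\tilde\sigma^*\theta_M) = (f,\tilde\sigma^*\theta_M - E_k) = 0$ (using that $f$ is cuspidal and hence orthogonal to $E_k$) delivers the same conclusion. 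All coefficients are zero and $\Phi_\mathcal{D}(\uparrow_H f) = 0$. The main obstacle is the identification $\downarrow_H\sigma^*\theta_L = \tilde\sigma^*\theta_M$, which is exactly what the compatibility diagram on page \pageref{pg:sigmaTilde} is set up to deliver.
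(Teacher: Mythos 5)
Your proof is correct and rests on exactly the same ingredients as the paper's: adjointness of $\uparrow_H$ and $\downarrow_H$, the identity $\downarrow_H\sigma^*\theta_{L,P}=\tilde{\sigma}^*\theta_{M,P}$ coming from the compatibility diagram, and the orthogonal decomposition of Proposition \ref{prp:PhiSurjects}. The paper packages the same argument dually --- it shows $\downarrow_H$ maps $\image(\Phi_{\mathcal{D}})$ into $\image(\Phi_{H^\bot/H})$ and concludes $(\uparrow_H f,g)=(f,\downarrow_H g)=0$ for all $g\in\image(\Phi_{\mathcal{D}})$ --- thereby avoiding the explicit unwinding of the integral kernel, but the mathematical content is identical.
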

\begin{proof}
    If $g\in\image(\Phi_\mathcal{D})$, then it is a linear combination of theta series, i.e.
\begin{align*}
    g = \sum_{L\in G}\sum_{\sigma\in\mathrm{Iso}(L'/L,\mathcal{D})}\sum_{i=1}^{r}c_{L,\sigma,i}\sigma_*\theta_{L,P_i}.
\end{align*}
Then
\begin{align*}
    \downarrow_H(g) = \sum_{L\in G}\sum_{\sigma\in\mathrm{Iso}(L'/L,\mathcal{D})}\sum_{i=1}^{r}c_{L,\sigma,i}\Tilde{\sigma}_*\downarrow_{\sigma^{-1}H}\left(\theta_{L,P_i}\right)\in\Theta_{m,k}(H^\bot/H)_0,
\end{align*}
where $\downarrow_{\sigma^{-1}H}\left(\theta_{L,P_i}\right)$ is again a theta series (cf.\ page \pageref{pg:SublatticeTheta}) and $\Tilde{\sigma}$ is as on page \pageref{pg:sigmaTilde}. Hence, $\downarrow_H(g)\in\Theta_{m,k}(H^\bot/H)_0=\image(\Phi_{H^\bot/H})$. Therefore, we have
\begin{alignat*}{2}
    f\in\ker(\Phi_{H^\bot/H}) &\Leftrightarrow (f,g) = 0 &&\forall g\in\image(\Phi_{H^\bot/H}) \\
    &\Rightarrow (f,\downarrow_H(g)) = 0 &&\forall g\in\image(\Phi_{\mathcal{D}}) \\
    &\Leftrightarrow (\uparrow_H(f),g) = 0 &&\forall g\in\image(\Phi_{\mathcal{D}}) \\
    &\Leftrightarrow \uparrow_H(f)\in\ker(\Phi_{\mathcal{D}}).
\end{alignat*}
\end{proof}
Combining the results of the previous sections we obtain
\begin{thm}
Let $m>6$. The map $\Phi$ is a linear combination of Hecke operators, namely
\begin{align*}
    \Phi_{\mathcal{D},m,k} = C(m,k)\frac{e(-\sign(\mathcal{D})/8)}{\sqrt{|\mathcal{D}|}}\sum_{\n=1}^\infty\frac{T(\n^2)}{\n^{2k-2-\h}},
\end{align*}
where $C(m,k) = G_m^{k-m/2}(1,1)\frac{(k-1)!}{(m/2-1)!}C(k)$ with $C(k)$ as in (\ref{eq:Ck}).
\end{thm}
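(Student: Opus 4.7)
The proof is a direct composition of the three main pillars established in Sections \ref{sec:VectorValuedHeckeOperators}--\ref{sec:SiegelWeil}, so no new input is required; the plan is essentially to unpack $\vartheta_{G,k}$ inside the integral definition of $\Phi$ and recognise each resulting piece as a Hecke operator kernel.

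First, I would invoke the Siegel--Weil formula in the form of Theorem \ref{thm:SiegelWeilApp}. The hypothesis $m>6$ gives $m/2>\g+1$ for $\g=2$, so $\theta_G^{(2)} = E^{(2)}_{m/2,\mathcal{D}}$; applying the differential operator $\partial_\h$ to both sides yields $\vartheta_{G,k} = \partial_\h E^{(2)}_{m/2,\mathcal{D}}$. Next I would substitute the explicit Hecke-kernel expansion from Theorem \ref{thm:diffE}, giving
\[ \vartheta_{G,k}(z,-\overline{z'}) \;=\; G_m^\h(1,1)\cdot\frac{(k-1)!}{(m/2-1)!}\cdot\frac{e(\sign(\mathcal{D})/8)}{\sqrt{|\mathcal{D}|}}\sum_{\n>0}\n^\h\,\omega_\n(z,-\overline{z'}). \]
Plugging this into the definition of $\Phi(f)(z')$ and pulling the scalar prefactor out of $\langle\cdot,\cdot\rangle$, the antilinearity of the pairing in the second argument converts $e(\sign(\mathcal{D})/8)$ into $e(-\sign(\mathcal{D})/8)$, while $G_m^\h(1,1)$, the factorial ratio, $\sqrt{|\mathcal{D}|}$ and $\n^\h$ remain unchanged because they are real. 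After interchanging the summation and the integral, Proposition \ref{prp:HeckeKernels} applied termwise replaces each integral $\int_{\Gamma\backslash\mathbb{H}}\langle f(z),\omega_\n(z,-\overline{z'})\rangle y^k\,y^{-2}\,dx\,dy$ by $C(k)\,\n^{-(2k-2)}(T(\n^2)f)(z')$. Combining the exponents gives $\n^\h\cdot\n^{-(2k-2)} = \n^{-(2k-2-\h)}$ and collecting the real prefactors with $C(k)$ produces exactly $C(m,k)=G_m^{k-m/2}(1,1)\,\frac{(k-1)!}{(m/2-1)!}\,C(k)$, which is the claimed identity.

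The only real subtlety, and hence the point I would check most carefully, is the justification for swapping the infinite sum with the integral. The series $\sum_\n \n^\h\omega_\n$ arises from the convergent Siegel Eisenstein series $\partial_\h E^{(2)}_{m/2}$ restricted to the diagonal, and $f$ is a cusp form, so uniform estimates on a fundamental domain combined with the absolute convergence of $E^{(2)}_{m/2}$ (valid since $m/2>3$) allow us to apply Fubini. Apart from this convergence check the argument is bookkeeping, with the principal pitfall being the effect of the antilinearity on the Weil phase $e(\sign(\mathcal{D})/8)$; this is precisely what flips the sign in the exponent in the final formula.
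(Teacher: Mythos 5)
Your proposal is correct and follows essentially the same route as the paper: Siegel--Weil (Theorem \ref{thm:SiegelWeilApp}), the pullback/differentiation formula, interchange of the sum with the integral, and termwise application of Proposition \ref{prp:HeckeKernels}, with the same bookkeeping of the antilinear phase $e(-\sign(\mathcal{D})/8)$ and the exponent $2k-2-\h$. The one difference is that the paper treats $\h=0$ separately via Theorem \ref{thm:EIsHeckeKernel}, where the additional term $E_k^{(1)}(z)\otimes E_k^{(1)}(z')$ appears and is discarded because $f$ is orthogonal to the Eisenstein series; your uniform appeal to Theorem \ref{thm:diffE} at $\h=0$ silently absorbs this step (the $\n=0$ contribution is only genuinely absent when $\h>0$), which is harmless here since $f$ is a cusp form, but worth noting.
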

\begin{proof}
	Let us first consider the case $\h=0$. We have
    \begin{align*}
        \Phi_{\mathcal{D},2k,k}(f)(z') &= \int_{\Gamma\backslash\mathbb{H}}\langle f(z),\theta_G^{(2)}\left(\matr{z}{0}{0}{-\overline{z'}}\right)\rangle y^k\frac{\mathrm{d}x\mathrm{d}y}{y^2} \\
        &= \int_{\Gamma\backslash\mathbb{H}}\langle f(z),E_k^{(2)}\left(\matr{z}{0}{0}{-\overline{z'}}\right)\rangle y^k\frac{\mathrm{d}x\mathrm{d}y}{y^2} \\
        &= \int_{\Gamma\backslash\mathbb{H}}\langle f(z),E_k^{(1)}(z)\rangle y^k\frac{\mathrm{d}x\mathrm{d}y}{y^2}\otimes E_k^{(1)}(z') \\
        &\qquad + \frac{e(-\sign(\mathcal{D})/8)}{\sqrt{|\mathcal{D}|}}\int_{\Gamma\backslash\mathbb{H}}\sum_{\n=1}^\infty\langle f(z),\omega_\n(z,-\overline{z'})\rangle y^k\frac{\mathrm{d}x\mathrm{d}y}{y^2},
    \end{align*}
    where we have used Theorems \ref{thm:SiegelWeilApp} and \ref{thm:EIsHeckeKernel}. The integral defines a scalar product on the finite dimensional vector space $\mathrm{S}_k(\D)$ and for a fixed $z'$ the sum is a convergent series in the space $\mathrm{S}_k(\D)$. Hence, by the continuity of the scalar product we may interchange the order of integration and summation. We use the fact that $E_k^{(1)}\in\mathrm{S}_k(\mathcal{D})^\bot$ and Proposition \ref{prp:HeckeKernels} to obtain
    \begin{align*}
    	\Phi_{\mathcal{D},2k,k}(f)(z') &= \frac{e(-\sign(\mathcal{D})/8)}{\sqrt{|\mathcal{D}|}}\sum_{\n=1}^\infty\int_{\Gamma\backslash\mathbb{H}}\langle f(z),\omega_\n(z,-\overline{z'})\rangle y^k\frac{\mathrm{d}x\mathrm{d}y}{y^2} \\
    	&= C(k)\frac{e(-\sign(\mathcal{D})/8)}{\sqrt{|\mathcal{D}|}}\left(\sum_{\n=1}^\infty\frac{T(\n^2)}{\n^{2k-2}}\right)f.
    \end{align*} 
    If $\h>0$, we first apply $\partial_{\h}$ and use Theorem \ref{thm:diffE} to obtain
    \begin{align*}
        \Phi_{\mathcal{D},m,k}(f)(z') &= \int_{\Gamma\backslash\mathbb{H}}\langle f(z),(\partial_\h\theta_G^{(2)})(z,-\overline{z'})\rangle y^k\frac{\mathrm{d}x\mathrm{d}y}{y^2} \\
        &= \int_{\Gamma\backslash\mathbb{H}}\langle f(z),(\partial_\h E_k^{(2)})(z,-\overline{z'})\rangle y^k\frac{\mathrm{d}x\mathrm{d}y}{y^2} \\
        &= G_m^\h(1,1)\frac{(k-1)!}{(m/2-1)!}\frac{e(-\sign(\mathcal{D})/8)}{\sqrt{|\mathcal{D}|}}\sum_{\n=1}^\infty\int_{\Gamma\backslash\mathbb{H}}\langle f(z),\n^\h\omega_\n(z,-\overline{z'})\rangle y^k\frac{\mathrm{d}x\mathrm{d}y}{y^2} \\
        &= G_m^\h(1,1)\frac{(k-1)!}{(m/2-1)!}C(k)\frac{e(-\sign(\mathcal{D})/8)}{\sqrt{|\mathcal{D}|}} \left(\sum_{\n=1}^\infty \frac{T(\n^2)}{\n^{2k-2-\h}}\right)f.
    \end{align*}
\end{proof}

We remark that in \cite{St} the formula
\begin{align*}
    \int_{\Gamma\backslash\mathbb{H}}\langle f,E_k^{(2)}\left(\matr{z}{0}{0}{-\overline{z'}},\overline{s}\right)\rangle y^k\frac{\mathrm{d}x\mathrm{d}y}{y^2} &= \frac{e(\sign(\mathcal{D})/8)}{\sqrt{|\mathcal{D}|}}C(k,s)\sum_{d\in\Z_{>0}}d^{-k-s}T(d^2)f
\end{align*}
was derived, where 
\begin{align*}
    E_k^{(2)}(Z,s) &= \sum_{M\in\Gamma_\infty^{(2)}\backslash\Gamma^{(2)}}\det(\Imp(Z))^s|_k[M]\rho_\mathcal{D}^{(2)}(M^{-1})e^0\otimes e^0
\end{align*}
and  $C(k,s)$ only depends on the weight $k$ and the variable $s$. However, a factor got lost: In formula (4.8) in \cite{St} a factor of $d^{k-2}$ must be added. Then instead of $d^{-k-s}$ one gets $d^{-2k+2-s}$. Furthermore, the factor $e(\sign(\mathcal{D})/8)$ should be $e(-\sign(\mathcal{D})/8)$, as it was pulled out of the antilinear part of the scalar product. Then Stein's formula coincides with ours.

We further study when $\Phi_\mathcal{D}(f)$ vanishes.

\begin{lem}\label{lem:ZeroForSomeP}
	Let $m>6$. If $\Phi_\D(f)=0$, then 
	\begin{align*}
		\prod_{p\mid N}\left(\sum_{r=0}^\infty\frac{T(p^{2r})}{p^{2rk-2r - \h r}}\right)f = 0.
	\end{align*}
\end{lem}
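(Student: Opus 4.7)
The plan is to start from the explicit expression
\[ \Phi_{\mathcal{D},m,k} = C(m,k)\frac{e(-\sign(\mathcal{D})/8)}{\sqrt{|\mathcal{D}|}}\sum_{\n=1}^\infty\frac{T(\n^2)}{\n^{2k-2-\h}} \]
obtained in the preceding theorem and factor the Dirichlet series as a product over primes. By Theorem \ref{thm:BruinierStein} the operators $T(\n^2)$ are multiplicative in $\n$ for coprime arguments, so formally
\[ \sum_{\n=1}^\infty\frac{T(\n^2)}{\n^{2k-2-\h}} = A\cdot B, \qquad A \coloneqq \prod_{p\mid N}\sum_{r=0}^\infty\frac{T(p^{2r})}{p^{r(2k-2-\h)}}, \quad B \coloneqq \prod_{p\nmid N}\sum_{r=0}^\infty\frac{T(p^{2r})}{p^{r(2k-2-\h)}}. \]
All operators involved are self-adjoint and mutually commuting: within a single prime the $T(p^{2r})$ are polynomials in $T(p^2)$ by the recursion used in the proof of Proposition \ref{prp:EulerProduct}, and for distinct primes they commute by the multiplicativity of $T(\n^2)$.

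The core of the proof is then to show that $B$ is invertible on $\mathrm{S}_k(\mathcal{D})$. By Proposition \ref{prp:EulerProduct} there is a basis of $\mathrm{S}_k(\mathcal{D})$ consisting of simultaneous eigenforms for the algebra generated by $\{T(\n^2):(\n,N)=1\}$. On such an eigenform $f$ with eigenvalues $\lambda(\n^2)$, the operator $B$ acts as the scalar $L(f,2k-2-\h)$. This value is finite since $2k-2-\h = m+\h-2 > k = m/2+\h$ (using $m>4$, which holds because $m>6$), so the defining Dirichlet series converges absolutely. The non-vanishing of the Euler factors at $s = 2k-2-\h$ reduces to observing $k-2-s = -m/2$ and $k-1-s = 1-m/2$, so the numerator factor
\[ (1-\chi_\mathcal{D}(p)p^{-m/2})(1+\chi_\mathcal{D}(p)p^{1-m/2}) \]
has absolute value bounded away from $0$ for all primes $p$ (since $m>2$), and the absolute convergence of the Euler product then forces $L(f,2k-2-\h)\neq 0$.

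Having established that $B$ acts by a non-zero scalar on every member of a basis, it is invertible on $\mathrm{S}_k(\mathcal{D})$. Since $A$ and $B$ commute, the hypothesis $\Phi_{\mathcal{D}}(f)=0$ gives $BAf = ABf = 0$, hence $Af \in \ker B = 0$. This is exactly the desired conclusion
\[ \prod_{p\mid N}\left(\sum_{r=0}^\infty\frac{T(p^{2r})}{p^{2rk-2r-\h r}}\right)f = 0. \]
The main obstacle is purely analytic: verifying that the product $B$, as an infinite composition of operators on the finite-dimensional space $\mathrm{S}_k(\mathcal{D})$, genuinely converges to an invertible operator. This is handled by simultaneous diagonalisation via Proposition \ref{prp:EulerProduct} together with the absolute convergence of $L(f,s)$ in the relevant right half-plane, which is where the hypothesis $m>6$ (in particular $m>4$) enters.
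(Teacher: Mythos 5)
Your proof is correct and takes essentially the same route as the paper's: factor the Dirichlet series of Hecke operators into the part coprime to $N$ and the part supported on $p\mid N$, then invert the coprime part by diagonalising via Proposition \ref{prp:EulerProduct} and using $L(f,2k-2-\h)\neq 0$. The only difference is that you spell out the non-vanishing of the Euler factors at $s=2k-2-\h$, which the paper merely asserts.
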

\begin{proof}
	By the previous theorem and Theorem \ref{thm:BruinierStein} we can write
	\begin{align*}
		\Phi_{\D,m,k}(f) &= C(m,k)\frac{e(-\sign(\mathcal{D})/8)}{\sqrt{|\mathcal{D}|}}\sum_{\n=1}^\infty\frac{T(\n^2)}{\n^{2k-2-\h}}f \\
		&= C(m,k)\frac{e(-\sign(\mathcal{D})/8)}{\sqrt{|\mathcal{D}|}}\left(\sumstack{\n=1 \\ (\n,N) = 1}^\infty\frac{T(\n^2)}{\n^{2k-2-\h}}\right)\prod_{p\mid N}\left(\sum_{r=0}^\infty\frac{T(p^{2r})}{p^{2rk-2r-\h r}}\right)f.
	\end{align*}
	For the primes coprime to $N$ we have seen in Proposition \ref{prp:EulerProduct} that $\mathrm{S}_k(\mathcal{D})$ has a basis consisting of simultaneous Hecke eigenforms and for such an eigenform $f$ and $\Rep(s)>k$ we have
	\[ \sumstack{\n=1 \\ (\n,N) = 1}^\infty\frac{T(\n^2)}{\n^{s}}f = L(f,s)\cdot f \]
	with
	\[ L(f,s) = \prod_{p\nmid N}\frac{(1-\chi_\mathcal{D}(p)p^{k-2-s})(1 +\chi_\mathcal{D}(p)p^{k-1-s})}{1-(\lambda(p^2)+\chi_\mathcal{D}(p)(1-p)p^{k-2})p^{-s}+p^{2k-2-2s}}. \]
	Since $L(f,2k-2-\h)\not=0$ for $m>4$, the operator
	\begin{align*}
		\sumstack{\n=1 \\ (\n,N) = 1}^\infty\frac{T(\n^2)}{\n^{2k-2-\h}}
	\end{align*}
	is bijective. This proves the lemma.
\end{proof}

As explained in the introduction, we now show that if the conditions of the main theorem are satisfied, then for any lattice $L\in G$ there exists a sublattice $M\subset L$ with certain properties that will be useful in the proof of the main theorem.
\begin{lem}\label{lem:SublatticeExists}
	Let $\D$ be a discriminant form of even signature $\sign(\mathcal{D})$ and $m$ a positive integer such that $m = \sign(\D)\bmod 8$ and $m>\text{$p$-rank}(\D)$ for all primes $p$. Then the genus $I\!I_{m,0}(\D)$ is non-empty. Suppose for any $L$ of genus $I\!I_{m,0}(\D)$ the $\Z_p$-lattice $L_p = L\otimes_\Z\Z_p$ splits a hyperbolic plane over $\Z_p$. Then for any $L$ of genus $I\!I_{m,0}(\mathcal{D})$, there exists a sublattice $M\subset L$ such that 
\begin{align*}
        M'/M \cong \mathcal{D}\oplus \langle\gamma,\mu\rangle,
    \end{align*}
    where $n\gamma=n\mu = 0$, $\q(\gamma)=\q(\mu) = 0\bmod 1$ and $(\gamma,\mu) = 1/n\bmod 1$. 
\end{lem}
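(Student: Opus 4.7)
The non-emptiness of $I\!I_{m,0}(\D)$ is Nikulin's existence theorem, cited earlier as \cite[Corollary 1.10.2]{N}. For the construction of $M$, my plan is to fix any $n>1$, prescribe $M$ by specifying its $p$-adic completions $M_p\subset L_p$ for each prime, and then glue to a global sublattice.

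First I would pick any $n>1$ and, for each prime $p\mid n$ with $p^{a_p}\|n$, use the hyperbolic-plane hypothesis to fix an orthogonal decomposition $L_p=H_p\oplus N_p$ with $H_p=\Z_p e_p\oplus \Z_p f_p$ a unimodular hyperbolic plane, so $(e_p,e_p)=(f_p,f_p)=0$ and $(e_p,f_p)=1$. I would then set
\[ M_p\coloneqq(\Z_p e_p + p^{a_p}\Z_p f_p)\oplus N_p\subset L_p \quad\text{for }p\mid n, \qquad M_q\coloneqq L_q \quad\text{for }q\nmid n. \]
Since $p^{a_p}L_p\subset M_p\subset L_p$ with $[L_p:M_p]=p^{a_p}$, this local data corresponds to a unique integral sublattice $M\subset L$ of index $n$, namely the preimage in $L$ of $\prod_{p\mid n}(M_p/p^{a_p}L_p)$ under $L/nL\cong\prod_{p\mid n}L_p/p^{a_p}L_p$.

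Next, I would compute $M'/M$ prime by prime. For $q\nmid n$ clearly $M_q'/M_q=L_q'/L_q=\D_q$, while for $p\mid n$, orthogonality gives
\[ M_p'/M_p=(\Z_p e_p+p^{a_p}\Z_p f_p)'/(\Z_p e_p+p^{a_p}\Z_p f_p)\oplus N_p'/N_p. \]
A direct dual-basis calculation shows the first summand is generated by $\gamma_p\coloneqq\overline{f_p}$ and $\mu_p\coloneqq\overline{e_p/p^{a_p}}$, both isotropic of order $p^{a_p}$ with pairing $(\gamma_p,\mu_p)=1/p^{a_p}$; in other words, it is the discriminant form denoted $U(p^{a_p})$. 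Since $H_p$ is unimodular, $N_p'/N_p\cong L_p'/L_p=\D_p$. Assembling over all primes and applying the Chinese remainder theorem to combine the $U(p^{a_p})$ into the desired $U(n)$, I obtain $M'/M\cong\D\oplus U(n)$ with $\gamma\coloneqq\sum_p\gamma_p$ and $\mu\coloneqq\sum_p\mu_p$ of order $n$, $\q(\gamma)=\q(\mu)=0$, and $(\gamma,\mu)=1/n$.

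The hardest part I expect to be the local-to-global gluing, but this reduces to checking $p^{a_p}L_p\subset M_p$ at each $p\mid n$, which is immediate from the definition; the standard correspondence between finite-index sublattices of $L$ and prescribed local data at a finite set of primes then produces $M$. The remaining verifications (the dual-basis computation for $U(p^{a_p})$ and the identification $N_p'/N_p\cong\D_p$) are routine.
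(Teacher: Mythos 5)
Your proposal is correct and follows essentially the same route as the paper: split off the unimodular hyperbolic plane in each relevant $L_p$, rescale one isotropic basis vector to produce $M_p$, glue the local data to a global sublattice $M$ (the paper does this by intersection, citing Kneser, which is equivalent to your CRT/preimage description), and compute $M'/M$ prime by prime. The only cosmetic difference is that you allow an arbitrary modulus $n$ with prime powers $p^{a_p}$, whereas the paper only needs the squarefree case $n=\prod_{p\mid N}p$.
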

\begin{proof}
    By assumption we can write
    \[ L_p  = \Tilde{L_p}\oplus U_p, \]
    where $U_p$ is the lattice $\alpha_1\Z_p+\alpha_2\Z_p$ with Gram matrix $\left(\begin{smallmatrix}0&1\\1&0\end{smallmatrix}\right)$. We define
    \[ M_p \coloneqq \begin{cases}
    	\Tilde{L_p}\oplus U_p(p) & \text{if $p\mid N$} \\
    	L_p & \text{if $p\nmid N$}
    \end{cases}\]
    and $M \coloneqq \bigcap_{p<\infty}(M_p\cap (L\otimes_\Z\Q))$, where $U_p(p)$ is the lattice $p\alpha_1\Z_p+\alpha_2\Z_p$. By \cite[Satz 21.5]{Kn} we have $M\otimes_\Z\Z_p = M_p$ for all $p$ and for $p\mid N$ we have
    \[ M_p'/M_p = \Tilde{L_p}'/\Tilde{L_p} \oplus U_p(p)'/U_p(p) \cong L_p'/L_p \oplus \langle\gamma_p,\mu_p\rangle \]
    with $p\gamma_p=p\mu_p = 0$, $\q(\gamma_p)=\q(\mu_p) = 0\bmod 1$ and $(\gamma_p,\mu_p) = 1/p\bmod 1$. This proves the result.
\end{proof}

Finally, we can prove the main theorem.
\begin{thm}\label{thm:MainTheorem}
	\mainTheorem
\end{thm}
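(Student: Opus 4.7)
The strategy follows the roadmap sketched in the introduction. By Proposition \ref{prp:PhiSurjects}, the map $\Phi_\mathcal{D}: \mathrm{S}_k(\mathcal{D})\to\Theta_{m,k}(\mathcal{D})_0$ is surjective, self-adjoint, and gives a decomposition $\mathrm{S}_k(\mathcal{D})=\Theta_{m,k}(\mathcal{D})_0\oplus\ker(\Phi_\mathcal{D})$. Therefore the claim $\mathrm{S}_k(\mathcal{D})\subset\Theta_{m,k}(\mathcal{D})$ is equivalent to $\ker(\Phi_\mathcal{D})=0$, and that is what I would prove.

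Fix $f\in\ker(\Phi_\mathcal{D})$ and a lattice $L\in I\!I_{m,0}(\mathcal{D})$. The hypothesis that $L_p$ splits a hyperbolic plane for every $p$ is exactly what Lemma \ref{lem:SublatticeExists} requires, so it furnishes a sublattice $M\subset L$ with
\[ M'/M\;\cong\;\mathcal{D}\oplus\bigoplus_{p\mid N}\langle\alpha_p,\beta_p\rangle, \qquad p\alpha_p=p\beta_p=0,\ \q(\alpha_p)=\q(\beta_p)=0,\ (\alpha_p,\beta_p)=1/p, \]
with $L/M$ corresponding to the isotropic subgroup $\bigoplus_p\langle\alpha_p\rangle$. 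Form $F\coloneqq \uparrow_{L/M}^{M'/M}(f)\in\mathrm{S}_k(M'/M)$. By Lemma \ref{lem:KernelsAreTheSame}, $F\in\ker(\Phi_{M'/M})$, and then by Lemma \ref{lem:ZeroForSomeP} (using $m>6$ so that the Euler product at $s=2k-2-\h$ is nonzero) we obtain
\[ \prod_{p\mid N'}\left(\sum_{r=0}^\infty \frac{T(p^{2r})}{p^{r(2k-2-\h)}}\right)F\;=\;0, \]
where $N'$ is the level of $M'/M$ (whose prime divisors coincide with those of $N$).

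Now I would feed this into Corollary \ref{cor:Symmetry} with $P=\{p:p\mid N\}$ and, for each $\delta\in\mathcal{D}$, the test pair
\[ \gamma\;=\;\delta+\sum_{p\in P}\alpha_p,\qquad \mu\;=\;\delta+\sum_{p\in P}\beta_p. \]
The conditions $(\prod_{p\in P}p)(\gamma-\mu)=0$ and $\q(\gamma)=\q(\mu)=\q(\delta)$ are immediate. For odd $p\in P$, $\gamma\not\in(M'/M)^p$ since multiplication by $p$ annihilates $\alpha_p$ and $\beta_p$, so the $p$-adic component of $\gamma$ cannot lie in the image of multiplication by $p$; likewise for $\mu$. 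For $p=2$, pairing with $\beta_2\in(M'/M)_2$ gives $2\q(\beta_2)+(\beta_2,\gamma)=1/2\not\equiv 0\bmod 1$, hence $\gamma\not\in(M'/M)^{2*}$; the analogous calculation with $\alpha_2$ handles $\mu$.

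Corollary \ref{cor:Symmetry} then yields $\langle F,v_{\gamma,\mu,P}\rangle=0$. Writing $f=\sum_{\delta\in\mathcal{D}} f_\delta\,e^\delta$, the form $F$ expands as
\[ F\;=\;\sum_{\delta\in\mathcal{D}} f_\delta\sum_{(a_p)\in\prod_p\Z/p\Z}e^{\delta+\sum_p a_p\alpha_p}, \]
which is supported on elements with trivial $\beta_p$-component for every $p\in P$. In the expansion $v_{\gamma,\mu,P}=\sum_{S\subset P}(-1)^{|S|}e^{\delta+\sum_{p\in S}\beta_p+\sum_{p\notin S}\alpha_p}$ only the $S=\emptyset$ summand therefore pairs nontrivially with $F$, giving $\langle F,v_{\gamma,\mu,P}\rangle=f_\delta$. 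Hence $f_\delta=0$ for every $\delta\in\mathcal{D}$, so $f=0$. The main technical obstacle is the careful verification of the non-triviality conditions in Corollary \ref{cor:Symmetry} (particularly the coset condition $\gamma,\mu\notin(M'/M)^{2*}$ at the prime $2$); this is precisely where the specific hyperbolic-plane structure of $M$ produced by Lemma \ref{lem:SublatticeExists} is used, and it is also the only place where the hypothesis on the local splitting is needed.
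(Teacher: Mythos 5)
Your proposal is correct and follows essentially the same route as the paper: reduce to injectivity of $\Phi_\mathcal{D}$ via Proposition \ref{prp:PhiSurjects}, pass to the sublattice $M$ from Lemma \ref{lem:SublatticeExists}, apply Lemmas \ref{lem:KernelsAreTheSame} and \ref{lem:ZeroForSomeP}, and extract the coefficients of $f$ with Corollary \ref{cor:Symmetry}; your direct evaluation $\langle F,v_{\gamma,\mu,P}\rangle=f_\delta$ is exactly the paper's $\downarrow_H(v)=e^\beta$ combined with adjointness of $\uparrow_H$ and $\downarrow_H$. The only cosmetic slip is that at $p=2$ you verify only $\gamma,\mu\notin(M'/M)^{2*}$, whereas Corollary \ref{cor:Symmetry} also requires $\gamma,\mu\notin(M'/M)^{2}$ there --- but your annihilation argument for odd $p$ applies verbatim at $p=2$.
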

\begin{proof}%[Proof of Theorem \ref{thm:MainTheorem}]
    By Proposition \ref{prp:PhiSurjects} it suffices to show that $\Phi_\mathcal{D}$ is injective, so assume that $\Phi_\mathcal{D}(f) = 0$ for some $f\in\mathrm{S}_k(\mathcal{D})$. Let $n=\prod_{p\mid N}p$ be the radical of $N$. The genus $I\!I_{m,0}(\mathcal{D})$ is non-empty (see \cite[Corollary 1.10.2]{N}). We choose some $L\in I\!I_{m,0}(\mathcal{D})$. By Lemma \ref{lem:SublatticeExists} there exists a lattice $M\subset L$ such that
    \begin{align*}
        \widetilde{\D}\coloneqq M'/M \cong \mathcal{D}\oplus \langle\gamma,\mu\rangle,
    \end{align*}
    where $n\gamma=n\mu = 0$, $\q(\gamma)=\q(\mu) = 0\bmod 1$ and $(\gamma,\mu) = 1/n\bmod 1$. We set $H\coloneqq\langle\gamma\rangle$ so that $\mathcal{D}\cong H^\bot/H$. Because of (\ref{eq:PhiTauCommute}), we can assume that $\D=H^\bot/H$. According to Lemma \ref{lem:KernelsAreTheSame} also $\Phi_{\widetilde{\mathcal{D}}}(\uparrow_H(f)) = 0$. Then
    \begin{align*}
        \prod_{p\mid N}\left(\sum_{r=0}^\infty\frac{T(p^{2r})}{p^{2rk-2r-\h r}}\right)\uparrow_H(f) = 0
    \end{align*}
    by Lemma \ref{lem:ZeroForSomeP}. For any $\beta\in \mathcal{D}$ clearly $\gamma+\beta$ and $\mu+\beta$ are not in $\widetilde{\mathcal{D}}^{p}$ for $p\mid N$. If $2\mid N$, then $(n/2)\gamma,(n/2)\mu\in\mathcal{D}_2$ and
    \begin{align*}
    	2\q((n/2)\mu) + ((n/2)\mu,\gamma+\beta) &= 2\q((n/2)\gamma) + ((n/2)\gamma,\mu+\beta) \\
    	&= (n/2)(\mu,\gamma) = 1/2\mod1,
    \end{align*}
    so that $\gamma+\beta\not\in\widetilde{\mathcal{D}}^{2*}$ and $\mu+\beta\not\in\widetilde{\mathcal{D}}^{2*}$. Since $n\cdot(\gamma+\beta) = n\beta = n\cdot(\mu+\beta)$ and $\q(\gamma+\beta) = \q(\beta) = \q(\mu+\beta)$, we can apply Corollary \ref{cor:Symmetry} with set of primes $P = \{p\text{ prime}\mid p\mid N\}$. Then the element $v\coloneqq v_{\gamma+\beta,\mu+\beta,P}$ from Corollary \ref{cor:Symmetry} is given by
    \[ v = \sum_{S\subset P}(-1)^{|S|}e^{\gamma_S^\mu + \beta}. \]
    Note that $(\gamma_S^\mu + \beta,\gamma) = \prod_{p\in S}\frac{1}{p}\bmod1$, so that $\downarrow_H(v) = e^\beta$ by the definition of $\downarrow_H$ (see page \pageref{eq:IsotropicDescent}). We obtain
    \[ \langle f,e^\beta\rangle = \langle \uparrow_H(f),v\rangle = \langle\prod_{p\mid N}\left(\sum_{r=0}^\infty\frac{T(p^{2r})}{p^{2rk-2r-\h r}}\right)\uparrow_H(f),v\rangle = 0. \]
    Because $\beta$ was chosen arbitrarily, it follows that $f=0$.
\end{proof}
We remark that a $p$-adic lattice $L_p$ of rank $m$ splits a hyperbolic plane over $\Z_p$ if and only if $\text{$p$-rank}(\mathcal{D}) < m-2$ or $\text{$p$-rank}(\mathcal{D}) = m-2$ and $\prod_{q}\epsilon_q = \leg{-a}{p}$, where the $p$-adic component of $\D$ is equal to
\[ \bigoplus_qq^{\epsilon_qn_q} \]
in the notation of Conway and Sloane (cf.\ \cite{CS}, chapter 15) and $|\mathcal{D}| = p^\alpha a$ with $(a,p) = 1$. This is a property of the genus of $L$, rather than of $L$ itself.

We extend the result of the main theorem to all discriminant forms of even signature and lattices of rank at least $10$ by considering the space
\begin{align*}
	\Theta_{m,k}^\uparrow(\mathcal{D}) \coloneqq &\spann\{\uparrow_H^\mathcal{D}(\sigma^*\theta_{L,P})\mid L\in I\!I_{m,0}(H^\bot/H) \text{ for some isotropic subgroup } H\subset\mathcal{D}, \\
	&\qquad P \in\operatorname{H}_m^{k-m/2}, \sigma\in\Iso(H^\bot/H,L'/L)\}.
\end{align*}
Then we obtain
\begin{cor}\label{cor:MainTheorem}
	\mainTheoremCorollary
\end{cor}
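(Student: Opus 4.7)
The strategy is to reduce Corollary \ref{cor:MainTheorem} to Theorem \ref{thm:MainTheorem} by exhibiting each cusp form $f \in \mathrm{S}_k(\mathcal{D})$ as a sum of isotropic lifts of cusp forms living on smaller discriminant forms to which the main theorem already applies. By the very definition of $\Theta_{m,k}^\uparrow(\mathcal{D})$ it suffices to establish a decomposition
\[ \mathrm{S}_k(\mathcal{D}) \;=\; \sum_H \uparrow_H^{\mathcal{D}}\bigl(\mathrm{S}_k(H^\bot/H)\bigr), \]
with $H$ ranging over isotropic subgroups of $\mathcal{D}$ such that $H^\bot/H$ satisfies the hypotheses of Theorem \ref{thm:MainTheorem}: $m > \text{$p$-rank}(H^\bot/H)$ for every prime $p$, and every lattice in the genus $I\!I_{m,0}(H^\bot/H)$ splits a hyperbolic plane over $\Z_p$ for all $p$. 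Since the signature is unaffected by passing to $H^\bot/H$, the congruence $m = \sign(H^\bot/H)\bmod 8$ is automatic.

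This decomposition is precisely what the result from \cite{M2} provides, which I invoke as a black box. The numerical threshold $m \geq 10$ appears at exactly this point: for an arbitrary $\mathcal{D}$ with $m = \sign(\mathcal{D})\bmod 8$ one needs to produce sufficiently many isotropic subgroups $H$ whose quotients $H^\bot/H$ have $p$-ranks uniformly bounded (essentially by $m-3$). By the remark following Theorem \ref{thm:MainTheorem}, any such uniform $p$-rank bound automatically guarantees the hyperbolic splitting hypothesis over every $\Z_p$, so both conditions of the main theorem are fulfilled simultaneously once the rank condition is secured. The value $m \geq 10$ is what makes this combinatorial construction feasible for all discriminant forms.

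Given the decomposition, the remainder is a direct unwinding. For any $f \in \mathrm{S}_k(\mathcal{D})$, write $f = \sum_H \uparrow_H^{\mathcal{D}}(g_H)$ with each $g_H \in \mathrm{S}_k(H^\bot/H)$ and $H^\bot/H$ admissible in the sense above. Applying Theorem \ref{thm:MainTheorem} to each summand expresses $g_H$ as a linear combination of theta series $\sigma^*\theta_{L,P}$ with $L \in I\!I_{m,0}(H^\bot/H)$, $P$ harmonic of degree $k-m/2$, and $\sigma \in \Iso(H^\bot/H,L'/L)$. Pushing through $\uparrow_H^{\mathcal{D}}$ and using linearity, $f$ is realized as a linear combination of elements of the form $\uparrow_H^{\mathcal{D}}(\sigma^*\theta_{L,P})$, which lie in $\Theta_{m,k}^\uparrow(\mathcal{D})$ by definition. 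Thus the only non-trivial step is the decomposition result imported from \cite{M2}; everything else is formal, which is why the corollary is stated to follow immediately from the main theorem.
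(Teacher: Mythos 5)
Your proposal is correct and follows the same route as the paper: the decomposition of $\mathrm{S}_k(\mathcal{D})$ into isotropic lifts $\uparrow_H^{\mathcal{D}}$ from quotients $H^\bot/H$ of bounded $p$-rank is exactly the result of \cite{M2} that the paper invokes, and the rest is the formal unwinding you describe. The only slight imprecision is that \cite{M2} gives a \emph{fixed} bound of $6$ on the $p$-ranks of $H^\bot/H$ (independent of $m$), and the hypothesis $m\geq 10$ enters only to guarantee that such quotients satisfy the conditions of Theorem \ref{thm:MainTheorem} (in particular $6<m-2$ forces the local hyperbolic splitting), rather than to make the decomposition itself feasible.
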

\begin{proof}
	Let $H\subset \mathcal{D}$ be any isotropic subgroup such that all $p$-ranks of $H^\bot/H$ are less than or equal to $6$. Then any lattice in $I\!I_{m,0}(H^\bot/H)$ locally splits a hyperbolic plane (cf.\ \cite[Corollary 1.9.3]{N}). By Theorem \ref{thm:MainTheorem} we know that $\mathrm{S}_k(H^\bot/H)\subset\Theta_{m,k}(H^\bot/H)$. It was shown in \cite{M2} that
	\begin{multline*}
		\mathrm{M}_k(\mathcal{D}) = \spann\{\uparrow_H(f) \mid f\in\mathrm{M}_k(H^\bot/H), \\
		H\subset \mathcal{D}\text{ isotropic subgroup such that $p$-rank$(H^\bot/H)\leq6$ for all primes $p$}\},
	\end{multline*}
	which proves the corollary.
\end{proof}

%We want to make a remark about the sharpness of the bound given in Theorem \ref{thm:MainTheorem}. Indeed when the $\text{$p$-rank}$ of $L'/L$ is $m-2$ and the condition for $\text{$p$-sign}$ is not satisfied or $\text{$p$-rank}(L'/L)>m-2$, then we can usually find some cusp form $F\in\mathrm{S}_k(L'/L)$ that is not in $\Theta_{m,k}(L'/L)$, because of the following argument: \\
%For an even lattice $L$ of level $N$ the lattice $N\cdot L'$ is also even. And if two lattices $L$ and $M$ are in the same genus, then their rescaled dual lattices are also in the same genus. Therefore, the lattices in $G(L)$ and the lattices ind $G(N\cdot L')$ are in a one-to-one correspondence with each other. This 

\section{Applications}

In this section we describe two applications of our main result.

\subsection*{Waldspurger's result on the scalar valued basis problem}

Waldspurger's result on the scalar valued basis problem can be derived from Theorem \ref{thm:MainTheorem} by showing that any newform of level $N$ is the $0$-component of a suitable vector valued cusp form. For a positive integer $N$ let $\mathrm{S}_k^{\text{new}}(N)$ denote the space of scalar valued newforms of level $N$. %For a square-free integer $D>1$ with $D=1\bmod 4$ let $\chi_D(n) = \leg{n}{D}$ and denote by $\mathrm{S}_k(D,\chi_D)$ the space of cuspforms of level $D$ and character $\chi_D$. This space decomposes into the direct sum
%\[ \mathrm{S}_k(D,\chi_D) = \mathrm{S}_k^{+1}(D,\chi_D)\oplus\mathrm{S}_k^{-1}(D,\chi_D), \]
%where for $\epsilon=\pm1$
%\[ \mathrm{S}_k^\epsilon(D,\chi_D) = \{f\in\mathrm{S}_k(D,\chi_D)\mid a_n(f) = 0\text{ if }\chi_D(n) = -\epsilon \}, \]
%where $a_n(f)$ denotes the $n$'th Fourier coefficient of $f$.
Following \cite{Wal} we let $\Theta(m,k,N,D)$ denote the space generated by scalar valued theta series of positive-definite even lattices of rank $m$, level $N$ and discriminant $D$ weighted with harmonic polynomials of degree $k-m/2$, i.e.\ the $0$-components of elements in $\Theta_{m,k}(\D)$, where $\D$ has level $N$ and $|\D| = D$.

\begin{cor}\label{cor:Waldspurger}
	Let $m,N$ be positive integers.
	\begin{enumerate}[(i)]
		\item For $m=0\bmod 8$ and $m\geq8$ we have $\mathrm{S}_k^{\text{new}}(N) \subset \Theta(m,k,N,N^2)$	(cf.\ \cite[Theorem 1]{Wal}).
		\item For $m=4\bmod 8$ and $m\geq12$ and for any prime $q\mid N$ we have $\mathrm{S}_k^{\text{new}}(N) \subset \Theta(m,k,N,N^2q^2)$ (cf.\ \cite[Theorem 2]{Wal}).
		%\item For $m$ even and any square-free integer $D>1$ with $D=1\bmod 4$ and $\epsilon = (-1)^{m/2}$ one has $\mathrm{S}_k^\epsilon(D,\chi_D) = \Theta(m,k,D,D^{m-1})$	(cf.\ \cite[Theorem 3 (i)]{Wal}).
	\end{enumerate}
\end{cor}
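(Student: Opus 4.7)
The plan, as signalled before the statement, is to realise any $f \in \mathrm{S}_k^{\text{new}}(N)$ as the $e^0$-component of a vector-valued cusp form for the Weil representation of a carefully chosen discriminant form $\mathcal{D}$, apply Theorem \ref{thm:MainTheorem} to $\mathrm{S}_k(\mathcal{D})$, and then read off the $e^0$-component of the resulting theta expansion.

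First I would choose $\mathcal{D}$. For (i), take $\mathcal{D}$ to be the hyperbolic discriminant form on $(\Z/N\Z)^2$ with $\q(a,b) = ab/N \bmod 1$; this has level $N$, order $N^2$, $\sign(\mathcal{D}) = 0\bmod 8$, trivial character $\chi_\mathcal{D}$, and $p$-rank at most $2$. For (ii), adjoin to this form an anisotropic rank-two $q$-adic component $\mathcal{A}_q$ of signature $4\bmod 8$ (such $\mathcal{A}_q$ is readily extracted from the Conway--Sloane tables for any prime $q$). The resulting $\mathcal{D}$ has level $N$, order $N^2q^2$, $\sign(\mathcal{D}) = 4\bmod 8$, trivial $\chi_\mathcal{D}$ (since $|\mathcal{D}|$ is a square and $\mathcal{A}_q$ can be picked so that the total oddity lies in $4\Z$), and $p$-rank at most $4$. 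In both cases $m$ exceeds $6$ and all $p$-ranks, and the strict inequality $p\text{-rank}(\mathcal{D}) < m-2$ holds at every prime, so by the splitting criterion recalled after Theorem \ref{thm:MainTheorem} every $L \in I\!I_{m,0}(\mathcal{D})$ splits a hyperbolic plane over each $\Z_p$. Consequently, Theorem \ref{thm:MainTheorem} applies and gives $\mathrm{S}_k(\mathcal{D}) \subset \Theta_{m,k}(\mathcal{D})$.

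Next, since $\rho_\mathcal{D}$ restricted to $\Gamma_0(N)$ stabilises $e^0$ through the trivial character $\chi_\mathcal{D}$, the $e^0$-component is a well-defined map $\mathrm{S}_k(\mathcal{D}) \to \mathrm{S}_k(\Gamma_0(N))$, and the averaging construction
\[ f \longmapsto \tilde f \coloneqq \sum_{M \in \Gamma_0(N)\backslash\SL}(f|_k M)\cdot \rho_\mathcal{D}(M^{-1})e^0 \]
produces a right inverse: the summand indexed by a coset is independent of the chosen representative because $\chi_\mathcal{D}=1$ and $\Gamma_0(N)$ acts trivially on $e^0$, a direct calculation shows $\tilde f|_k M_0 = \rho_\mathcal{D}^{(1)}(M_0)\tilde f$ for $M_0 \in \SL$, and the $e^0$-component of $\tilde f$ equals a non-zero multiple of $f$. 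Expanding $\tilde f = \sum c_{L,P,\sigma}\,\sigma^* \theta_{L,P}$ via Theorem \ref{thm:MainTheorem} and taking $e^0$-components turns each $\sigma^*\theta_{L,P}$ into the scalar theta series $\sum_{\lambda \in L} P(\lambda)\,e^{\pi i (\lambda,\lambda)\tau}$ of a rank-$m$, level-$N$ lattice $L$ of discriminant $|\mathcal{D}|$. This exhibits $f$ as an element of $\Theta(m,k,N,|\mathcal{D}|)$, giving (i) with $|\mathcal{D}| = N^2$ and (ii) with $|\mathcal{D}| = N^2 q^2$.

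The main obstacle is the representation-theoretic content of the lifting step --- confirming that the $e^0$-component of $\tilde f$ does not accidentally vanish. This reduces to checking that the orthogonal projection $\C[\mathcal{D}] \to \C e^0$ composed with the averaging map is a non-zero scalar on the $\chi_\mathcal{D}$-isotypic subspace; this can be verified either directly from the action of the generators of $\SL$ in $\rho_\mathcal{D}$ or via Frobenius reciprocity for $\Gamma_0(N) \subset \SL$. A secondary bookkeeping task, needed especially in case (ii) when $q=2$, is to confirm that $\mathcal{A}_q$ can indeed be chosen with $\sign(\mathcal{A}_q) \equiv 4\bmod 8$ and oddity in $4\Z$ so that $\chi_\mathcal{D}$ stays trivial; once this is arranged, the rest is routine.
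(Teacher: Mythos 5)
Your overall architecture is exactly the paper's: for (i) the hyperbolic form $(\Z/N\Z)^2$ with $\q(a,b)=ab/N$, for (ii) the same form augmented by an anisotropic rank-two $q$-adic block of signature $4\bmod 8$ (the paper takes $\q(c,d)=(c^2-ud^2)/q$ with $u$ a non-square mod $q$), followed by the averaging lift $f\mapsto\sum_{M\in\Gamma_0(N)\backslash\Gamma}f|_k[M]\,\rho_{\D}(M^{-1})e^0$ and an appeal to Theorem \ref{thm:MainTheorem}. Your verification of the hypotheses of that theorem (signature, $p$-ranks, the strict inequality $p\text{-rank}(\D)<m-2$ forcing the local hyperbolic splitting, triviality of $\chi_\D$) is correct and matches what the paper needs.

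The genuine gap is the step you yourself label ``the main obstacle'' and then dispose of in one sentence. The $e^0$-component of $\tilde f$ is \emph{not} a non-zero multiple of $f$ by ``a direct calculation'': it equals $f$ plus the terms $\sum_{M\neq I}\langle\rho_\D(M^{-1})e^0,e^0\rangle\, f|_k[M]$, whose coefficients are Gauss sums that are generically non-zero, so the sum is a priori a combination of $f$ with its images under trace-type operators. Showing that this combination is a scalar multiple of $f$ uses that $f$ is a \emph{newform} (Atkin--Lehner theory), and showing the scalar is non-zero depends on the precise local structure of $\D$; this is a substantive theorem, which the paper imports as \cite[Theorem 1.1]{SV}, not a routine check. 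Neither of your suggested shortcuts closes this: the $e^0$-projection is only $\Gamma_0(N)$-equivariant, not $\SL$-equivariant, so Schur-type arguments do not apply, and Frobenius reciprocity only identifies the relevant Hom-spaces --- it cannot tell you that the particular composite $\Psi_\D$ is non-zero on a given newform. To make your proof complete you must either cite the result of \cite{SV} or carry out the Gauss-sum/Atkin--Lehner computation in full. A minor secondary point: for $q=2$ the anisotropic block must be taken to be the $D_4$-type form $\q(c,d)=(c^2+cd+d^2)/2$ on $(\Z/2\Z)^2$ (oddity $4$, so $\chi_\D$ stays trivial); you correctly flag this but should pin it down.
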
 
\begin{proof}
	By Theorem \ref{thm:MainTheorem} it suffices to show that any $f\in\mathrm{S}_k^{\text{new}}(N)$ is the $0$-component of some vector valued cusp form for an appropriate Weil representation. For a discriminant form $\D$ of level $N$ we define a lift $\mathcal{L}_\D:\mathrm{S}_k^{\text{new}}(N)\to\mathrm{S}_k(\D)$ by
	\[ \mathcal{L}_\D(f) \coloneqq \sum_{M\in\Gamma_0(N)\backslash\Gamma}f|_k[M]\rho_{\D}(M^{-1})e^0 \]
	and consider the map $\Psi_\D:\mathrm{S}_k^{\text{new}}(N)\to\mathrm{S}_k(N)$ defined by $\Psi_\D(f) = \langle \mathcal{L}_\D(f),e^0\rangle$. In \cite[Theorem 1.1]{SV} it was shown that for certain discriminant forms, $\Psi_\D$ is a non-zero multiple of the identity map, say $c_\D\cdot\mathrm{id}$. \\
	For part (i) let $m=0\bmod8$ with $m\geq8$ and $\D = (\Z/N\Z)^2$ with quadratic form given by
	\[ (a,b)\mapsto \frac{ab}{N}\mod1. \]
	Then $I\!I_{m,0}(\D)$ is non-empty and satisfies the conditions of Theorem \ref{thm:MainTheorem} as well as those of \cite[Theorem 1.1]{SV}, so that for any $f\in\mathrm{S}_k^{\text{new}}(N)$ we find $\mathcal{L}_\D(f)\in\Theta_{m,k}(\D)$ and hence, $f = \Psi_\D(c_\D^{-1} f)\in\Theta(m,k,N,N^2)$. \\
	For part (ii) let $m=4\bmod8$ with $m\geq12$ and $q\mid N$ a prime and consider the discriminant form $\D = (\Z/N\Z)^2\oplus(\Z/q\Z)^2$ with quadratic form given by
	\[ (a,b)+(c,d)\mapsto \frac{ab}{N} + \frac{c^2-ud^2}{q}\mod1, \]
	where $u$ is a non-square modulo $q$. Then $I\!I_{m,0}(\D)$ is non-empty and again, the conditions of Theorem \ref{thm:MainTheorem} and those of \cite[Theorem 1.1]{SV} are satisfied, which implies $\mathrm{S}_k^{\text{new}}(N) \subset \Theta(m,k,N,N^2q^2)$.
\end{proof}
Theorem 3 in \cite{Wal} on cuspforms with character can probably also be proved using Theorem \ref{thm:MainTheorem}, but we have not checked all the details.

\subsection*{Local Borcherds products}

Now we want to apply the main result of the present work to show that the space of local obstructions for constructing Borcherds products generates the space of global obstructions. This extends Theorem 5.4 in \cite{BF}. For details on Borcherds products see for example \cite{Bo1}, \cite{Bo2}, \cite{Br} and \cite{BF}.

Let $L$ be an even lattice of signature $(l,2)$ with bilinear form $(\cdot,\cdot)$ and quadratic form $\q(x) = \frac{1}{2}(x,x)$. Note that in \cite{BF} lattices of signature $(2,l)$ were considered. Throughout we will assume that $l>8$ is even and that $L$ splits two hyperbolic planes $I\!I_{1,1}\oplus I\!I_{1,1}$. The complex manifold
\[ \mathcal{K}=\{[Z_L]\in \mathbb{P}(L\otimes_\Z\C) \mid (Z_L,Z_L)=0, (Z_L,\overline{Z_L})<0\} \]
has two connected components, which are exchanged by complex conjugation of $Z_L$. We choose one of the connected components and denote it by $\mathcal{H}_l$. The group $\Ort(L\otimes_\Z\R)$ acts on $\mathcal{K}$ and its index-$2$ subgroup $\Ort(L\otimes_\Z\R)^+$ preserving $\mathcal{H}_l$ consists of the elements of positive spinor norm. Let $\Gamma$ be a subgroup of finite index of the orthogonal group of $L$, denote $\Gamma^+ = \Gamma\cap\Ort(L\otimes_\Z\R)^+$ and let $X_\Gamma$ be the Baily-Borel compactification of $\Gamma^+\backslash\mathcal{H}_l$. The boundary of this compactification is a curve with usually many irreducible components, which are determined by the isotropic subspaces of $L\otimes_\Z\Q$. 

A divisor $D$ on $X_\Gamma$ is a formal linear combination $D = \sum n_Y Y$
$(n_Y \in\Z)$ of irreducible closed analytic subsets $Y$ of codimension $1$ such that the support
$\bigcup_{n_Y\not=0} Y$ is a closed analytic subset of everywhere pure codimension $1$. For any vector $\lambda\in L'$ of positive norm the orthogonal complement of $\lambda$ in $\mathcal{H}_l$ defines
a divisor $\lambda^\bot$ on $\mathcal{H}_l$. Let $\beta\in L'/L$ and $n \in\Z + \q(\beta)$ with $n > 0$. Then
\[ H(\beta,n) = \sumstack{\lambda\in \beta+L \\ \q(\lambda) = n}\lambda^\bot \]
is a $\Gamma$-invariant divisor on $\mathcal{H}_l$. It determines a divisor on $\Gamma^+\backslash\mathcal{H}_l$ and by taking the closure also on $X_\Gamma$. Following Borcherds we call this divisor \textit{Heegner divisor} of discriminant $(\beta, n)$. Note that $H(\beta,n) = H(-\beta,n)$. A Heegner divisor defines an element in the Picard group $\mathrm{Pic}(X_\Gamma)$ of $X_\Gamma$, which is the group of divisors modulo linear equivalence. Two divisors are called linearly equivalent if their difference is principal, i.e.\ the divisor of a meromorphic function on $X_\Gamma$.

Let $F\subset L\otimes_\Z\Q$ be any 2-dimensional isotropic subspace and let $\Tilde{F}\subset L\otimes_\Z\Q$ be a complementary subspace such that $F+\Tilde{F}$ is the sum of two hyperbolic planes.
We set $M = L\cap F^\bot\cap\Tilde{F}^\bot$. Then $M$ is positive-definite of rank $l-2>6$. Note that $\mathcal{D} \coloneqq L'/L \cong M'/M$. There exists a natural isomorphism $\pi:\mathcal{D}\rightarrow M'/M$ induced from the projection from $L'$ to $M'$. Let $s$ be a generic boundary point, i.e.\ a point on the $1$-dimensional boundary component corresponding to $F$ that does not correspond to a $0$-dimensional boundary component. The local divisor class group of $X_\Gamma$ in $s$ is the local Picard group
\[ \mathrm{Pic}(X_\Gamma,s) = \lim_{\longrightarrow}\mathrm{Pic}(U_\mathrm{reg}), \]
where $U$ ranges through all open neighbourhoods of $s$ and $U_\mathrm{reg} = U\cap(\Gamma\backslash\mathcal{H})$. For $\Gamma_\infty = P\cap\Gamma$, where $P\subset\Ort(L\otimes_\Z\R)$ is the parabolic subgroup stabilizing $F\otimes\R$, we can consider the pullback of divisors on $\Gamma\backslash\mathcal{H}$ to divisors on $\Gamma_\infty\backslash U_\epsilon$ for certain neighbourhoods $U_\epsilon$ (cf.\ \cite[Section 4]{BF}). The pullback of $H(\beta,l)$ is denoted by $H_F(\beta,l)$ and will be called a \textit{local Heegner divisor}. One can show that
\[ H_F(\beta,l) = \sumstack{\lambda\in (\beta+L)\cap F^\bot \\ \q(\lambda) = l}\lambda^\bot \]
and $H_F(\beta,l)$ defines an element of $\mathrm{Pic}(X_\Gamma,s)$ (for details see \cite{BF}). The following proposition was shown in \cite{BF}.
\begin{prp}[Proposition 5.1, \cite{BF}]\label{prp:BruinierHeegner}
	A finite linear combination of divisors
	\[ \frac{1}{2}\sum_{\beta\in \mathcal{D}}\sumstack{n\in\Z+\q(\beta) \\ n > 0}c(\beta,n)H_F(\beta,n) \]
	(with $c(\beta,n)\in\Z$ and $c(\beta,n) = c(-\beta,n)$) is a torsion element of $\mathrm{Pic}(X_\Gamma,s)$ if and only if
	\[ \sum_{\beta\in \mathcal{D}}\sumstack{n\in\Z+\q(\beta) \\ n > 0}c(\beta,n)a(\pi(\beta),n) = 0 \]
	for all theta series $\theta_{M,P}$ with spherical polynomial $P\in\operatorname{H}_{l-2}^2$ and Fourier coefficients $a(\beta,n)$ ($\beta\in M'/M$ and $n\in\Z+\q(\beta)$).
\end{prp}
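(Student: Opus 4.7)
The plan is to invoke \cite[Proposition~5.1]{BF} directly. The setup here matches that of \cite{BF} point by point: $L$ has signature $(l,2)$ with $l>8$ even and splits two hyperbolic planes; the isotropic plane $F$, the complementary subspace $\Tilde{F}$, and the positive-definite sublattice $M=L\cap F^\bot\cap\Tilde{F}^\bot$ of rank $l-2>6$ are chosen exactly as in \cite{BF}; and the isomorphism $\pi:\D\to M'/M$ is the natural one induced by the orthogonal projection from $L'$ onto $M'$. No additional argument beyond verifying this correspondence is needed.

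For orientation, the strategy of \cite{BF} is the following. At a generic point $s$ of the $1$-dimensional boundary component of $X_\Gamma$ attached to $F$, the Baily--Borel compactification admits an explicit local toroidal model built from a tube domain over the positive cone of $M\otimes_\Z\R$, coupled with a one-parameter unipotent degeneration in the direction of the isotropic line $F\cap F^{\bot\bot}$. In this model each local Heegner divisor $H_F(\beta,n)$ becomes a smooth divisor whose class in $\mathrm{Pic}(X_\Gamma,s)$ depends only on the pair $(\pi(\beta),n)$, and $\mathrm{Pic}(X_\Gamma,s)\otimes\Q$ is identified via a Chern-class computation as a finite-dimensional $\Q$-vector space generated by these classes.

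A natural pairing is then constructed between this vector space and the space of theta series $\theta_{M,P}$ with $P\in\operatorname{H}_{l-2}^2$: the class of $\sum c(\beta,n)H_F(\beta,n)$ pairs with $\theta_{M,P}=\sum a(\beta,n)e(nz)e^\beta$ to give $\sum c(\beta,n)a(\pi(\beta),n)$. Harmonic polynomials of degree precisely $2$ appear because $\mathcal{H}_l$ has complex codimension two inside the ambient tube domain attached to $M$, so the Chern forms representing the relevant line bundles on the local toroidal chart are of second order in the direction transverse to the boundary, and their Fourier--theta expansions naturally involve degree-two spherical harmonics on $M\otimes_\Z\R$.

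The main technical obstacle, carried out in \cite{BF}, is to establish non-degeneracy of this pairing. Once that is granted, the asserted equivalence (torsion class in $\mathrm{Pic}(X_\Gamma,s)$ if and only if the pairing vanishes against every such theta series) is immediate by a standard finite-dimensional duality argument. The proposed approach is therefore simply to reduce the present statement to \cite[Proposition~5.1]{BF} via the matching of notation and hypotheses described above.
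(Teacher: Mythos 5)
Your proposal coincides with what the paper actually does: the proposition is stated as Proposition 5.1 of \cite{BF} and no independent proof is given, the only content being the observation that the setup here ($L$ of signature $(l,2)$ splitting $I\!I_{1,1}\oplus I\!I_{1,1}$, the positive-definite lattice $M=L\cap F^\bot\cap\Tilde{F}^\bot$ of rank $l-2>6$, and the induced isomorphism $\pi:\mathcal{D}\to M'/M$) is precisely that of \cite{BF}. Your additional sketch of the internal argument of \cite{BF} is inessential to this reduction and is not something the paper verifies, so the approach is essentially the same.
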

Let from now on $\Gamma = \ker(\Ort(L)\to\Ort(L'/L))$ be the discriminant kernel of $L$. In \cite{Bo1} Borcherds constructed lifts of certain vector valued modular forms of weight $1-l/2$ for $\Mp$ to meromorphic modular forms for the group $\Gamma^+$ (cf.\ Theorem 13.3 \cite{Bo1}). These lifts have product expansions at $0$-dimensional cusps and are therefore called automorphic products or Borcherds products. Their divisors are linear combinations of Heegner divisors. The following characterization can be found in \cite[Theorem 5.2]{BF} and goes back to Borcherds (cf.\ Theorem 3.1 \cite{Bo2}).

\begin{thm}[Borcherds]\label{thm:BorcherdsHeegner}
	A finite linear combination of Heegner divisors
	\[ \frac{1}{2}\sum_{\beta\in \mathcal{D}}\sumstack{n\in\Z+\q(\beta) \\ n > 0}c(\beta,n)H(\beta,n) \]
	(with $c(\beta,n)\in\Z$ and $c(\beta,n) = c(-\beta,n)$) is the divisor of a Borcherds product for the group $\Gamma^+$ (as in \cite[Theorem 13.3]{Bo1}) if and only if for any cusp form $f\in\mathrm{S}_{1+l/2}(\mathcal{D})$ with Fourier coefficients $a(\beta,n)$ the equality
	\[ \sum_{\beta\in \mathcal{D}}\sumstack{n\in\Z+\q(\beta) \\ n > 0}c(\beta,n)a(\beta,n) = 0 \]
	holds.
\end{thm}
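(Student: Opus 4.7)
The plan is to reduce the theorem to two well-established ingredients, each present in the references cited in the text. The first ingredient is Borcherds' multiplicative lifting theorem \cite[Theorem 13.3]{Bo1}: given any weakly holomorphic vector valued modular form $F\in\mathrm{M}^!_{1-l/2}(\mathcal{D})$ of weight $1-l/2$ whose principal part is
\[ \sum_{\beta\in\mathcal{D}}\sumstack{n\in\Z+\q(\beta) \\ n>0}c(\beta,n)e^\beta q^{-n}, \]
with $c(\beta,n)\in\Z$, $c(\beta,n)=c(-\beta,n)$, and only finitely many non-zero terms, Borcherds constructs a meromorphic modular form $\Psi(F)$ for $\Gamma^+$ with divisor $\tfrac{1}{2}\sum c(\beta,n)H(\beta,n)$, and every Borcherds product arises this way. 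So the theorem reduces to the following assertion: such a set of coefficients $\{c(\beta,n)\}$ is the principal part of some $F\in\mathrm{M}^!_{1-l/2}(\mathcal{D})$ if and only if $\sum c(\beta,n)a(\beta,n)=0$ for every cusp form $f=\sum a(\beta,n)e^\beta q^n\in\mathrm{S}_{1+l/2}(\mathcal{D})$.

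The second ingredient is the Serre duality statement for vector valued modular forms, which I would establish by the standard residue argument. Define
\[ \{F,f\}\coloneqq\sum_{\beta\in\mathcal{D}}\sumstack{n\in\Z+\q(\beta) \\ n>0}c(\beta,n)a(\beta,n). \]
Given $F\in\mathrm{M}^!_{1-l/2}(\mathcal{D})$ with principal part as above and $f\in\mathrm{S}_{1+l/2}(\mathcal{D})$, the scalar-valued function $\langle F(\tau),\overline{f(\tau)}\rangle$ transforms with weight $2$ under $\SL$ (the weights $1-l/2$ and $1+l/2$ are complementary, and the Weil representation is unitary). Applying Stokes' theorem to the associated differential on a truncated standard fundamental domain, the boundary integrals along the vertical sides cancel by translation invariance and the integral across the top extracts exactly the constant Fourier coefficient of $\langle F,f\rangle$, which is $\{F,f\}$; as the truncation approaches the cusp this limit must vanish because $f$ is cuspidal and $F$ is holomorphic on $\mathbb{H}$ apart from its principal part. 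This proves the ``only if'' direction. For the ``if'' direction one shows that the cokernel of the principal-part map $\mathrm{M}^!_{1-l/2}(\mathcal{D})\to\{\text{formal principal parts}\}$ is finite dimensional of dimension equal to $\dim\mathrm{S}_{1+l/2}(\mathcal{D})$, and the pairing $\{\cdot,\cdot\}$ descends to a non-degenerate duality between this cokernel and $\mathrm{S}_{1+l/2}(\mathcal{D})$; this is Borcherds' Theorem~3.1 in \cite{Bo2}, proved either by explicit construction of weakly holomorphic Poincaré series with prescribed principal parts or by a Hahn--Banach style argument.

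The main obstacle is the careful bookkeeping of representations and weights: one must match the Weil representation used for the Borcherds input (weight $1-l/2$, signature $(l,2)$) against the representation $\rho_\mathcal{D}$ used throughout this paper for positive-definite realizations, and check that the symmetry $c(\beta,n)=c(-\beta,n)$ built into Heegner divisors is compatible with the corresponding symmetry of Fourier coefficients of forms in $\mathrm{S}_{1+l/2}(\mathcal{D})$ (coming from $\rho_\mathcal{D}(-I)$ acting by $e(\sign(\mathcal{D})/4)=(-1)^{1+l/2}$). Once these compatibilities are spelled out, the theorem follows by chaining the two ingredients. In fact the statement as written is exactly \cite[Theorem~5.2]{BF}, so in practice my ``proof'' would consist of citing Borcherds' lifting theorem and the Serre duality reference, with the above residue calculation given as a sanity check for the ``only if'' direction which is the part the applications in this section actually need.
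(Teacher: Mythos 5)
The paper gives no proof of this theorem at all: it is quoted verbatim from \cite[Theorem 5.2]{BF}, which in turn rests on Borcherds' obstruction result \cite[Theorem 3.1]{Bo2}, exactly the two ingredients you identify. Your reduction to the lifting theorem plus Serre duality (with the residue-pairing sanity check) is correct and is precisely the argument behind the cited references, so your approach coincides with the paper's.
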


According to Theorem \ref{thm:BorcherdsHeegner} the space $\mathrm{S}_{1+l/2}(\mathcal{D})$ carries some information on the subgroup of $\mathrm{Pic}(X_\Gamma)$ generated by the divisors of Borcherds products, while the space generated by the theta series $\theta_{M,P}$ carries information on the local Picard group $\mathrm{Pic}(X_\Gamma,s)$. So when the theta series span the space of cusp forms, then we can infer that a linear combination of Heegner divisors is the divisor of a Borcherds product if and only if it is locally trivial. 

\begin{defi}
	A divisor $H$ on $X_\Gamma$ is called trivial at generic boundary points if for every one-dimensional irreducible component $B$ of the boundary of $X_\Gamma$ there exists a generic point $s\in B$ such that $H$ is a torsion element of $\mathrm{Pic}(X_\Gamma,s)$.
\end{defi}

The following result was suggested by Jan Bruinier and generalizes \cite[Theorem 5.4]{BF} to non-unimodular lattices.

\begin{thm}\label{thm:BorcherdsProductLocallyTrivial}
	Let $L$ be an even lattice of signature $(l,2)$ with even $l>8$ splitting two hyperbolic planes $I\!I_{1,1}\oplus I\!I_{1,1}$. Assume that the discriminant form $\mathcal{D} = L'/L$ satisfies the conditions of Theorem \ref{thm:MainTheorem} for $m=l-2$. Let
	\[ H = \frac{1}{2}\sum_{\beta\in \mathcal{D}}\sumstack{n\in\Z+\q(\beta) \\ n > 0}c(\beta,n)H(\beta,n) \]
	be a finite linear combination of Heegner divisors $H(\beta,n)$ (with coefficients $c(\beta,n)\in\Z$). Then the following statements are equivalent:
	\begin{enumerate}[i)]
		\item $H$ is the divisor of a Borcherds product for the group $\Gamma^+$ as in \cite[Theorem 13.3]{Bo1}.
		\item $H$ is the divisor of a meromorphic automorphic form for $\Gamma^+$.
		\item $H$ is trivial at generic boundary points.
	\end{enumerate}
\end{thm}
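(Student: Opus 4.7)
The plan is to prove the chain (i)$\Rightarrow$(ii)$\Rightarrow$(iii)$\Rightarrow$(i); the first two implications are essentially formal and only the third uses the main theorem of this paper. The implication (i)$\Rightarrow$(ii) is immediate since a Borcherds product is by construction a meromorphic automorphic form for $\Gamma^+$. For (ii)$\Rightarrow$(iii), if $H=\operatorname{div}(f)$ for a meromorphic automorphic form $f$, then on $U_{\text{reg}}$ for any sufficiently small neighbourhood $U$ of a generic boundary point $s$, the restriction $f|_{U_{\text{reg}}}$ is a local defining equation of $H$, so its divisor class is trivial (hence a fortiori torsion) in $\mathrm{Pic}(X_\Gamma,s)$ after passing to the direct limit.

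For the main implication (iii)$\Rightarrow$(i), I set $k=1+l/2$ and $m=l-2$, so that $k-m/2=2$ and $k\geq m/2$. By Theorem \ref{thm:BorcherdsHeegner} it suffices to show
\[ \sum_{\beta\in\mathcal{D}}\sumstack{n\in\Z+\q(\beta) \\ n>0}c(\beta,n)a(\beta,n)=0 \]
for every $f\in\mathrm{S}_k(\mathcal{D})$ with Fourier coefficients $a(\beta,n)$. The hypotheses on $\mathcal{D}$ and $m$ are exactly those of Theorem \ref{thm:MainTheorem}, so $\mathrm{S}_k(\mathcal{D})\subset\Theta_{m,k}(\mathcal{D})$ and it is enough to verify the identity when $f=\sigma^*\theta_{M_0,P}$ for some $M_0\in I\!I_{m,0}(\mathcal{D})$, $P\in\operatorname{H}_m^2$ and $\sigma\in\Iso(\mathcal{D},M_0'/M_0)$. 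For each such triple I would realize $M_0$ as $L\cap F^\bot\cap\tilde{F}^\bot$ for a suitable $2$-dimensional isotropic subspace $F\subset L\otimes_\Z\Q$ with complementary $\tilde{F}$ satisfying $F+\tilde{F}\cong I\!I_{1,1}\oplus I\!I_{1,1}$, arranged so that the induced projection $\pi\colon\mathcal{D}\to M_0'/M_0$ coincides with $\sigma$. Applying hypothesis (iii) at a generic point of the $1$-dimensional boundary component attached to $F$, and then invoking Proposition \ref{prp:BruinierHeegner}, would give the required vanishing against the Fourier coefficients of $\sigma^*\theta_{M_0,P}$.

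The main obstacle is this last geometric realization step. Because $L$ splits $I\!I_{1,1}\oplus I\!I_{1,1}$, every $M_0\in I\!I_{m,0}(\mathcal{D})$ has the property that $M_0\oplus I\!I_{1,1}\oplus I\!I_{1,1}$ is locally isomorphic to $L$ at every prime; since this genus is indefinite of rank $l\geq 10$ it consists of a single isomorphism class by Eichler's theorem, producing an orthogonal embedding $M_0\hookrightarrow L$ with complement $I\!I_{1,1}\oplus I\!I_{1,1}$. Matching the induced $\pi$ with an arbitrary $\sigma\in\Iso(\mathcal{D},M_0'/M_0)$ can then be arranged by replacing $F$ with an $\Ort(L)$-translate, since the combined image of $\Ort(L)\to\Ort(L'/L)$ and $\Aut(M_0)$ suffices to reach every $\sigma$. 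Should this surjectivity fail for some $\mathcal{D}$, one can alternatively exploit (\ref{eq:PhiTauCommute}) and average over $\Ort(\mathcal{D})$ to reduce to a single fixed $\sigma$ per $M_0$, thereby still producing a subspace of $\Theta_{m,k}(\mathcal{D})$ that contains $\mathrm{S}_k(\mathcal{D})$.
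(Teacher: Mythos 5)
Your proposal is correct and follows essentially the same route as the paper: both reduce (iii)$\Rightarrow$(i) to combining Theorem \ref{thm:BorcherdsHeegner}, Theorem \ref{thm:MainTheorem} (with $k=1+l/2$, $m=l-2$, $k-m/2=2$) and Proposition \ref{prp:BruinierHeegner}, using that the genus of $L$ has one class and that $\Ort(L)\to\Ort(\mathcal{D})$ is surjective to realize every pair $(M_0,\sigma)$ at some boundary component. The paper merely compresses the geometric realization step that you spell out in your last paragraph into a one-line citation of \cite[Theorem 1.14.2]{N}.
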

\begin{proof}
	The modularity of a meromorphic modular form $\psi$ for the group $\Gamma^+$ immediately implies that the divisor $(\psi)$ attached to $\psi$ is trivial at generic boundary points. Therefore, we only need to prove (iii) implies (i). So assume that $H$ is trivial at generic boundary points. Note that, since $L$ splits two hyperbolic planes, the genus of $L$ contains only one class and the natural projection $\Ort(L)\rightarrow\Ort(L'/L)$ is surjective (cf.\ \cite[Theorem 1.14.2]{N}). So by proposition \ref{prp:BruinierHeegner}
	\[ \sum_{\beta\in \mathcal{D}}\sumstack{n\in\Z+\q(\beta) \\ n > 0}c(\beta,n)a(\beta,n) = 0 \]
	for any cusp form $f\in\Theta_{m,1+l/2}(\mathcal{D})$ with Fourier coefficients $a(\beta,n)$ ($\beta\in\mathcal{D}$ and $n\in\Z+\q(\beta)$). Now according to Theorem \ref{thm:MainTheorem} $\Theta_{m,1+l/2}(\mathcal{D})=\mathrm{S}_{1+l/2}(\mathcal{D})$ and so Theorem \ref{thm:BorcherdsHeegner} implies that $H$ is the divisor of a Borcherds product.
\end{proof}

%Note that the condition on the genus of $L$ containing only one class is not a strong condition as this is usually the case for indefinite lattices. Indeed if the genus of an indefinite lattice $L$ contains more than one class, then we know that $|\det(L)|\geq128$ and $k^{\binom{n}{2}}$ divides $4^{[\frac{n}{2}]}\det(L)$ for some non-square $k\equiv0\text{ or }1\bmod4$ (cf.\ \cite[chapter 15, \S 9]{CS} and \cite[chapter 7, \S 3]{Wat}).

As a corollary we find for lattices $L$ that satisfy the conditions of Theorem \ref{thm:BorcherdsProductLocallyTrivial} that any meromorphic modular form for the discriminant kernel of $L$, whose divisor is a linear combination of Heegner divisors, is a Borcherds product. This was already proved in greater generality in \cite{Br}, however, using an entirely different argument, which says nothing about the local Picard groups.

\bibliographystyle{abbrv}
\bibliography{references}

\end{document}